\documentclass{amsart}
\usepackage{amsmath}

\usepackage{amssymb}
\usepackage{amsthm}
\usepackage{amscd,amsbsy}
\usepackage{mathtools}
\usepackage{xcolor}
\usepackage{tikz-cd}
\usetikzlibrary{arrows,matrix}
\usepackage{float}
\usepackage{mathtools}
\usepackage{enumitem}
\usepackage{mathrsfs}  

\newtheorem{theorem}{Theorem}[section]
\newtheorem{lemma}[theorem]{Lemma}

\newtheorem{corollary}[theorem]{Corollary}

\newtheorem{addendum}[theorem]{Addendum}

\theoremstyle{definition}
\newtheorem{definition}[theorem]{Definition}

\newtheorem{example}[theorem]{Example}

\newtheorem{definitions and remarks}[theorem]{Definitions and Remarks}

\theoremstyle{remark}

\newtheorem{remark}[theorem]{Remark}
\newtheorem{remarks}[theorem]{Remarks}

\numberwithin{equation}{section}

\newcommand{\inv}{\mathrm{inv}}
\newcommand{\ATWinv}{\mathrm{ATWinv}}

\newcommand{\ord}{\mathrm{ord}}

\newcommand{\codim}{\mathrm{codim}\,}

\newcommand{\lcm}{\mathrm{lcm}}

\newcommand{\nc}{\mathrm{nc}}

\newcommand{\quot}{\mathrm{quot}}
\newcommand{\cp}{\mathrm{cp}}

\newcommand{\Spec}{\mathrm{Spec}}
\newcommand{\Specan}{\mathrm{Specan}}
\newcommand{\SL}{\mathrm{SL}}
\newcommand{\SLplus}{\mathrm{SL}_{\mathrm{lex}}^{+}}

\newcommand{\Hom}{\mathrm{Hom}}

\newcommand{\al}{{\alpha}}
\newcommand{\be}{{\beta}}
\newcommand{\de}{{\delta}}
\newcommand{\ep}{{\varepsilon}}
\newcommand{\De}{{\Delta}}
\newcommand{\ga}{{\gamma}}
\newcommand{\Ga}{{\Gamma}}

\newcommand{\la}{{\lambda}}

\newcommand{\s}{{\sigma}}

\newcommand{\vp}{{\varphi}}

\newcommand{\om}{{\omega}}

\newcommand{\IN}{{\mathbb N}}
\newcommand{\IQ}{{\mathbb Q}}
\newcommand{\IA}{{\mathbb A}}
\newcommand{\IP}{{\mathbb P}}

\newcommand{\IC}{{\mathbb C}}
\newcommand{\IK}{{\mathbb K}}
\newcommand{\IL}{{\mathbb L}}

\newcommand{\IZ}{{\mathbb Z}}

\newcommand{\cC}{{\mathcal C}}

\newcommand{\cO}{{\mathcal O}}

\newcommand{\cS}{{\mathcal S}}

\newcommand{\oj}{\overline{j}}

\newcommand{\oG}{\overline{G}}
\newcommand{\oF}{\overline{F}}
\newcommand{\obe}{\overline{\be}}
\newcommand{\oal}{\overline{\al}}
\newcommand{\oell}{\overline{\ell}}

\newcommand{\tX}{{\widetilde X}}

\newcommand{\tD}{{\widetilde D}}
\newcommand{\tE}{{\widetilde E}}

\newcommand{\tS}{{\widetilde S}}

\newcommand{\tW}{{\widetilde W}}
\newcommand{\tZ}{{\widetilde Z}}

\newcommand{\ts}{{\widetilde \sigma}}

\newcommand{\hf}{{\widehat f}}

\newcommand{\hy}{{\hat y}}

\newcommand{\ucC}{\underline{\cC}}

\newcommand{\dx}{{\dot x}}

\newcommand{\llb}{{[\![}}
\newcommand{\rrb}{{]\!]}}
\newcommand{\llbr}{{(\!(}}
\newcommand{\rrbr}{{)\!)}}

\newcommand{\RN}[1]{%
  \textup{\uppercase\expandafter{\romannumeral#1}}%
}

\begin{document}
\title[Group-circulant singularities and partial desingularization]
{Group-circulant singularities and partial desingularization preserving normal crossings}
\author[A.~Belotto da Silva]{Andr\'e Belotto da Silva}
\author[E.~Bierstone]{Edward Bierstone}
\address[A.~Belotto da Silva]{Universit\'e Paris Cit\'e and Sorbonne Universit\'e, UFR de Math\'ematiques, Institut de Math\'ematiques 
de Jussieu-Paris Rive Gauche, UMR7586, F-75013 Paris, France, and Institut universitaire de France (IUF)}
\email{andre.belotto@imj-prg.fr}
\address[E.~Bierstone]{University of Toronto, Department of Mathematics, 40 St. George Street, Toronto, ON, Canada M5S 2E4}
\email{bierston@math.utoronto.ca}
\thanks{Research supported by Agence Nationale de la
Recherche (ANR) projects ANR-25-ERCC-0003-01, ANR-22-CE40-0014 (Belotto da Silva), and
NSERC Discovery Grant RGPIN-2017-06537 (Bierstone)}
\date{\today}

\subjclass[2020]{Primary 14B05, 14E05, 32S45; Secondary 14E15, 32S05}

\begin{abstract}
The subject is partial desingularization preserving the normal crossings singularities of an algebraic or analytic variety $X$
(over $\IC$ or over an uncountable algebraically closed field $\IK$ of characteristic zero, in the algebraic case).
Our approach has three parts
involving distinct techniques: (1) a formal splitting theorem for regular or analytic functions which satisfy a generic splitting
hypothesis; (2) a study of singularities in the closure of the normal crossings locus, based on the combinatorics of 
$G$-circulant matrices, where $G$ is a finite abelian group, leading to a theorem on reduction to \emph{group-circulant normal form};
(3) a partial desingularization theorem, proved using (1) and (2) together with 
weighted blowings-up of group-circulant singularities. Previous results were for partial desingularization preserving
simple normal crossings, or preserving general normal crossings when $\dim X \leq 4$.
\end{abstract}

\maketitle

\setcounter{tocdepth}{1}
\tableofcontents

\section{Introduction}\label{sec:intro}

\subsection{Overview}\label{subsec:overview}
The subject of this article is partial desingularization preserving the normal crossings locus
of an algebraic or analytic variety $X$ (over $\IC$ or over an uncountable algebraically closed field $\IK$ of characteristic zero, 
in the algebraic case). 
Our approach has three parts involving distinct techniques, which we believe may be of independent interest. 

\smallskip\noindent
(1) A formal splitting or factorization theorem for regular or analytic functions which satisfy a generic splitting
hypothesis (Theorem \ref{thm:splitintro}). This can be viewed as a multivariate version of the Newton-Puiseux theorem (cf. Macdonald \cite{McD}, 
Soto-Vicente \cite{SV}) or as a variant of the Abhyankar-Jung theorem (cf. Parusi\'nski-Rond \cite{PR}).

\smallskip\noindent
(2) A study of singularities in the closure of the normal crossings locus, generalizing the classical pinch point or Whitney umbrella,
based on the combinatorics of \emph{$G$-circulant matrices}, where $G$ is a finite abelian group. Theorem \ref{thm:ordpintro}
on reduction to \emph{group-circulant normal form} (Definition \ref{def:groupcircintro}) by smooth blowings-up plays a central part
in the paper.
According to Kanemitsu and Waldschmidt \cite{KW}, the idea of a group-circulant matrix is due to Dedekind \cite{Ded}.
The term ``$G$-circulant'' was introduced by Diaconis \cite[Chapt.\,3]{Diac}.

\smallskip\noindent
(3) A theorem on partial desingularization preserving normal crossings obtained using (1) and (2) together with 
weighted blowings-up of group-circulant singularities (Theorem \ref{thm:orb1pintro}). 
A \emph{normal crossings} singularity (of \emph{order} $k$)
is defined by a monomial equation $x_1\cdots x_k = 0$, where $x_1,\ldots,x_k$ are formal or analytic
coordinates, as opposed to the more restrictive notion of \emph{simple normal crossings}, where $x_1,\ldots,x_k$ form part of
a system of regular parameters. (A normal crossings singularity at a given point of an algebraic variety $X$ is simple
normal crossings in an \'etale neighbourhood.) Techniques of Abramovich, Temkin and W{\l}odarczyk \cite{ATW19}, \cite{Wlodar}
are used to show that the weighted blowings-up needed are globally well-defined. W{\l}odarczyk has informed us that he has obtained Theorem \ref{thm:orb1pintro} (1)-(4), by a different approach.

\smallskip
For simple normal crossings, there is a proper birational (or bimeromorphic) morphism $\s: X' \to X$ given by a sequence of
smooth blowings-up preserving the simple normal crossings locus of $X$, such that $X'$ has only simple normal crossings
singularities; see \cite{BDMV}, \cite[Section 12]{BMinv}, \cite[Section 3]{BMmin1}, \cite{Kolog}, \cite{Sz}, and also Remark \ref{rem:snc}.

On the other hand, for normal crossings in general, already for $\dim X =2$, the best that can be achieved is a smooth
blow-up sequence $\s: X' \to X$ preserving the normal crossings locus of $X$, such that $X'$ has only normal crossings
or pinch point singularities \cite{BMmin1}; in particular, pinch points cannot be eliminated without modifying the $\nc$ (normal
crossings) locus. The question of analogous results in higher dimension was raised by
Koll\'ar \cite{Kolog}. Complete lists of singularities which must be admitted in $X'$ were given for $\dim X \leq 3$ in \cite{BLMmin2}
and, more recently, for $\dim X \leq 4$ in \cite{BBR}. In these dimensions, only standard circulant singularities, defined by 
$G$-circulant matrices with $G$ a finite cyclic group, appear in the closure in $X'$ of the normal crossings
locus of $X$. 

Note that, since circulant singularities cannot be eliminated from $X'$ in the preceding results, nor can any singularities
in a small neighbourhood of a circulant point. In dimension $\leq 3$, a complete list of singularities that must be admitted
after a birational (or bimeromorphic) morphism preserving the normal crossings locus of $X$ is given by circulant singularities
and their neighbours. Dimension $4$, however, involves a new phenomenon---an additional singularity has to be admitted
as a limit of singularities occuring in a neighbourhood of certain circulant points.

In this article, we prove that (in any given dimension) there is a
proper birational (or bimeromorphic) morphism $\s: X' \to X$ preserving the normal crossings locus of $X$, where $X'$ admits
only group-circulant singularities and their neighbours, together with a finite list of possible additional singularities
that can be described in a precise way using weighted blowings-up of group-circulant loci. The morphism $\s$ is induced
by a finite sequence of weighted blowings-up (over any relatively compact open subset, in the analytic case).

We give formulas for the latter in Theorem \ref{thm:invtnc} that are not as explicit as those of \cite{BBR}, 
which are obtained using blowings-up that are
combinatorial in nature to move away singularities which occur in a \emph{distinguished divisor} at a
circulant point. Already for $\dim X  = 5$, the combinatorial \emph{moving away} techniques of \cite{BBR} are
insufficient (in Example \ref{ex:nonirred}, for instance). The possibility of more explicit formulas in higher dimension
remains an interesting problem.

The three main parts of the article are described more precisely in the following subsections of the introduction, and developed
in detail in the Sections 2-4 below. Our overall strategy for partial desingularization is similar to that of \cite{BBR}. Special
cases of (1) and (2) above are used in \cite{BBR}, whereas (3) essentially replaces the moving away techniques of \cite{BBR}.
Theorems \ref{thm:splitintro} and \ref{thm:ordpintro} were announced in \cite[Rmk.\,1.23]{BBR}.

We will usually assume that the ground field $\IK$ is $\IC$, though all 
results for algebraic varieties hold over any uncountable algebraically closed field $\IK$ of characteristic zero. 
We have tried to make the article readable
on its own, but nevertheless refer to \cite{BBR} for certain arguments, and also refer to \cite{BBR} or to the \emph{Crash
course on the desingularization invariant} \cite[Appendix A]{BMmin1}
for an explanation of certain techniques from resolution of singularities that we use.

\subsection{The splitting theorem}\label{subsec:split}
Let $X \subset Z$ denote an embedded algebraic (or analytic) hypersurface, over an algebraically closed field $\IK$
of characteristic zero ($\IK = \IC$ in the analytic case). Let $S$ denote a smooth closed subvariety of $X$ of codimension $k$
in $Z$. If $X$ is $\nc(k)$ (i.e., normal crossings of order $k$) generically on $S$,
then the set of non-$\nc(k)$ points
of $X$ in $S$ is a proper closed algebraic (or analytic) subset (cf. \cite[Lemma 3.7]{BBR}), which can be transformed to $S\cap E$,
where $E \subset Z$ is an ordered simple normal crossings divisor transverse to $S$, by resolution of singularities. 

Then, given $a_0 \in S$, there is an \'etale (or analytic) neighbourhood of $a_0$ in $Z$ with coordinates
\begin{equation*}
(w,u,x,z) = (w_1,\ldots,w_r, u_1,\ldots,u_q, x_1,\ldots,x_{k-1},z)
\end{equation*}
in which $\{w_j=0\}$, $j=1,\ldots,r$, are the components of $E$ at $a_0=0$, $S = \{z=x=0\}$, and the ideal of $X$ is generated
by a function
\begin{equation}\label{eq:weierpoly}
f(w,u,x,z) = z^k + a_1(w,u,x)z^{k-1} +\cdots + a_k(w,u,x),
\end{equation}
where the coefficients $a_i(w,u,x)$ are regular (or analytic) functions. The hypotheses imply that
$f$ is in the ideal generated by $x_1,\ldots,x_{k-1}, z$,
and $f$ splits formally (into $k$ factors of degree $1$ in $z$) at every point where $z=x=0,\, w_1\cdots w_r \neq 0$.

\begin{theorem}\label{thm:splitintro} 
Let $f(w,u,x,z)$ denote a function \eqref{eq:weierpoly}, where the coefficients $a_i(w,u,x)$ are regular (or analytic) functions,
$f$ is in the ideal generated by $x_1,\ldots,\allowbreak x_{k-1}, z$,
and $f$ splits formally at every point where $z=x=0,\, w_1\cdots w_r \neq 0$.
Assume that $\IK$ is uncountable.
Then, after finitely many blowings-up with successive centres of the form 
\begin{equation}\label{eq:splitcentre}
\{z=x=w_j =0\},\quad 1 \leq j \leq r,
\end{equation}
we can assume that $f$ splits over $\IK\llb w^{1/p},u,  x\rrb$, for some positive integer $p$,
where $w^{1/p} := (w_1^{1/p},\ldots, w_r^{1/p})$;
i.e., that 
\begin{equation}\label{eq:splitintro}
f(w,u,x,z) = \prod_{i=1}^k \left(z + b_i (w^{1/p}, u, x)\right),
\end{equation}
where each $ b_i (w^{1/p}, u, x) \in \IK\llb w^{1/p},u,  x\rrb$.
\end{theorem}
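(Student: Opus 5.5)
We argue by induction on $k$. At each stage we peel off one root using the Abhyankar--Jung theorem in the variables $w$, and use uncountability of $\IK$ to pass from generic splitting to splitting over a formal ring.

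\emph{Setting up the induction.} First apply the Tschirnhausen transformation $z \mapsto z - a_1/k$, so that $a_1 = 0$. This preserves the hypotheses: the condition $f \in (x,z)$ implies $a_1(w,u,0)=0$, so the ideal $(x,z)$ is unchanged. It is also compatible with blowings-up of the form \eqref{eq:splitcentre}. Next consider the discriminant $\Delta(w,u,x)$ of $f$ in $z$. The key claim is the following. After finitely many blowings-up \eqref{eq:splitcentre}, the restriction $\Delta(w,u,0)$ should become a monomial in $w$ times a unit, at least after we separate the roots along $S$ in the correct order.

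More precisely, we restrict to $x=0$. There $f(w,u,0,z)=z^k+\dots$ has all roots equal to $0$, since $f\in(x,z)$, so the discriminant on $S$ vanishes and carries no information. We must therefore look at the first-order data in $x$. Write
\[
b_i = \sum_j \beta_{ij}(w,u)\,x_j + O(x^2).
\]
The generic splitting hypothesis says that, at points with $w_1\cdots w_r\ne0$, the linear forms $\ell_i(x) := \sum_j \beta_{ij}x_j$ make sense, and the ideal of $X$ is $\nc(k)$ there. Consequently the $\ell_i$ are pairwise non-proportional, and in fact any $k-1$ of them are independent, with sum zero.

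These linear forms are the roots of the degree-$k$ polynomial
\[
\phi(w,u,x,z) := \text{the part of } f \text{ of degree } k \text{ in } (x,z),
\]
which has coefficients in $\IK[[w,u]]$. The discriminant of $\phi$, viewed as a polynomial in $z$ with $x$ generic, gives a function $D(w,u)$ that is nonzero off $\{w_1\cdots w_r=0\}$. Blowings-up \eqref{eq:splitcentre} act on the coefficients of $\phi$ by multiplying them by powers of $w_j$ (weighted by their degree in $x$). Combined with the hypothesis that the non-$\nc$ locus lies in $E$, this lets us arrange that $D$ is a monomial in $w$ times a unit. The Abhyankar--Jung theorem then splits $\phi$ over $\IK[[w^{1/p},u]][x]$ into linear factors $z+\ell_i(w^{1/p},u,x)$.

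\emph{Lifting to $f$.} Once the leading part splits into pairwise distinct factors, with distinctness up to monomial factors in $w^{1/p}$, we lift the factorization to all orders in $x$ by a Hensel-type argument. We expand $b_i=\sum_{m\ge1}b_{i,m}$ by homogeneous degree in $x$ and solve for $b_{i,m}$ degree by degree. The linear system at each step is governed by the differences $\ell_i-\ell_{i'}$, so it can be inverted only up to denominators that are monomials in $w^{1/p}$.

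This is the main obstacle: the denominators can accumulate with $m$. We handle it in two steps.

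First, we show that an exact splitting exists with $b_i \in \IK[[w^{1/p},u,x]][w^{-1/p}]$, or better, over the field of fractions. We obtain this from the formal splitting at each point of $S\setminus E$. Uncountability of $\IK$ enters here: the sets of points where the splitting holds with pole order at most $N$ form countably many constructible sets covering $S\setminus E$, so one of them is dense. This yields a uniform $p$ and a uniform bound on denominators in each degree.

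Second, we remove the denominators by further blowings-up \eqref{eq:splitcentre}. In the chart where $x=w_jx'$ and $z=w_jz'$, the strict transform is $w_j^{-k}f$, and a root $b_i$ becomes $w_j^{-1}b_i(w,u,w_jx')$. Its degree-$m$ part is multiplied by $w_j^{m-1}$. Hence a pole of order growing at most linearly in $m$, with slope less than the number of blowings-up, is absorbed after finitely many steps.

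The heart of the proof is therefore to establish that linear bound on the pole orders. We would try to get it by bounding the valuation of $\prod_{i\ne i'}(b_i-b_{i'})$ via the discriminant of $f$, which is a genuine element of $\IK[[w,u,x]]$. This bounds the Newton polyhedron of the solution linearly.
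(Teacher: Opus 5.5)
You have located the crux correctly. Once the degree-$m$ part in $x$ of every root has $w$-pole order bounded linearly in $m$, finitely many blowings-up \eqref{eq:splitcentre} clear the denominators by exactly the mechanism you describe, since the degree-$m$ part is multiplied by $w_j^{m-1}$. But you never prove this bound. You only say you ``would try'' to get it from the discriminant of $f$, and that does not work as stated: the discriminant vanishes identically on $S$, and the $w$-adic order is not defined on series with unbounded poles. Moreover, a lower bound for the order of $\prod_{i\neq i'}(b_i-b_{i'})$ does not bound the individual factors. A linearly bounded Newton polyhedron for the roots is precisely what has to be shown.

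In the paper this is the role of the Soto--Vicente multivariate Newton--Puiseux theorem \cite{SV}. With $\IL=\overline{\IK(u)}$ and $(y_1,\ldots,y_k)=(x,w)$, there is $A\in\SLplus(k,\IZ)$ such that $\psi_A(f)$ splits in $\IL\llb y_1^{1/p},\ldots,y_k^{1/p}\rrb[z]$. The roots $b_i$ already lie in $\overline{\IL(w)}\llb x\rrb$ by \cite[Lemma 3.1]{BBR}, so unique factorization shows the roots of $\psi_A(f)$ are the $\psi_A(b_i)$. Since $\psi_A$ is injective on monomials and multiplies $x^\alpha$ by $w^{\sum_j\alpha_j a_{jk}}$ (times a monomial in $x$), the coefficient of $x^\alpha$ in $b_i$ has pole order at most $\mu|\alpha|$ with $\mu=\max_j a_{jk}$. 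That is exactly the linear bound you need.

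There are two further problems. First, your leading-form step uses that $X$ is $\nc(k)$ at the points of $S$ off $E$ (distinct $\ell_i$, $D\not\equiv 0$), but the theorem assumes only formal splitting there. The function $f=z^2-(w^3+x_1)x_1^4$ (with $k=2$, $r=1$, no $u$) satisfies all hypotheses and genuinely needs blowings-up. Yet $\phi=z^2$, so $D\equiv 0$ and your Abhyankar--Jung/Hensel scheme has nothing to start from. Under the extra $\nc(k)$ assumption, which does hold where the theorem is applied, the linear bound is within reach of your recursion if you run it in the coordinates $y_i=z+\ell_i(x)$. There, dividing by $\prod_{i'\neq i}(\ell_{i'}-\ell_i)$ becomes division by a monomial, and only rewriting the degree-$d$ part of $f$ costs powers of $w$, namely $O(d)$. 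But you did not carry this out, and it would not give the theorem as stated.

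Second, your use of uncountability is not meaningful. On $S\setminus E$ the $w_j$ are units, so ``pole order at most $N$'' says nothing there. Also, no ring with uniformly bounded poles such as $\IK\llb w^{1/p},u,x\rrb[w^{-1/p}]$ contains the roots in general: in Example \ref{ex:basic} the coefficient of $x^{n+1}$ in a root is a multiple of $w^{3/2-3n}$. Passing from the local splittings off $E$ to a single splitting over a field is itself the nontrivial \cite[Lemma 3.1]{BBR}, which you do not justify. In the paper, uncountability enters only at the end. After clearing denominators the roots have coefficients in $\overline{\IK(u)}$, and Lemma \ref{lem:generic} plus uncountability give a $u_0$ at which all countably many coefficients are regular. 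So $f$ splits at a point of each component of $S\cap E$, and purity of the branch locus (Lemma \ref{lem:Yempty}) then gives splitting everywhere, in particular at $a_0$.
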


Theorem \ref{thm:splitintro} is proved in \cite[Thm.\,1.19]{BBR} in the special case that $q=0$ and $r=1$.

\begin{remark}\label{rem:splitintro}
Let $S'$ denote the strict transform of $S := \{z=x=0\}$ by a blowing-up $\s$ with centre of the form \eqref{eq:splitcentre}.
Then $S' \cong S$; moreover, $S' = \{z=x=0\}$ again, in the $w_j$-coordinate chart of $\s$, and $a_0=0$ lifts to $a_0'=0$ in $S'$.
The meaning of the statement in the theorem is that, after finitely many blowings-up with centres of the form \eqref{eq:splitcentre},
the strict transform $X'$ of $X$ is given at $a_0'=0$ by a function $f'$ with a splittting as in \eqref{eq:splitintro}.
\end{remark}

\begin{example}\label{ex:basic} (See also \cite[Example 1.18]{BBR}.) Let
$$
f(w,x,z) = z^2 + (w^3 + x) x^2.
$$
Then $f$ (or the subvariety $X$ of $\IA_{\IC}^3$ defined by $f(w,x,z)=0$) is $\nc(2)$ at
every nonzero point of the $w$-axis $\{x=z=0\}$. The function $f$ does not split over $\IC\llb w,x\rrb$,
but we can write
$$
f(v^2,x,z) = z^2 + v^6\left(1+\frac{x}{v^6}\right) x^2,
$$
so that $f(w,x,z)$ splits in $\IC(w^{1/2})\llb x\rrb [z]$, where $\IC(w^{1/2})$ denotes the field of fractions of $\IC[w^{1/2}]$. 

Consider the blowing-up $\s$ of the origin in $\IA_{\IC}^3$. The $w$-axis lifts to the $w$-chart of $\s$,
given by substituting $(w,wx,wz)$ for $(w,x,z)$, and the strict transform of $X$ is 
given by $f' = 0$ in the $w$-chart, where
$$
f'(w,x,z) := w^{-2} f(w,wx,wz) = z^2 + w(w^2 + x)x^2.
$$
After two more blowings-up of the origin, we get
$$
f'(w,x,z) = z^2 + w^3(1 + x)x^2, 
$$
so that $f'(w,x,z)$ splits over $\IC\llb w^{1/2},x\rrb$ (or $f'(v^2,x,z)$ splits in an \'etale neighbourhood of the origin).
\end{example}

\subsection{Group-circulant normal form}\label{subsec:Gcirc}
Let $G$ denote a finite abelian group. Let $g_1,\ldots,g_t$ denote any enumeration of the elements of $G$
(where $t$ is the order $|G|$ of $G$), and let $X_{g_j}$, $j=1,\ldots,t$, denote indeterminates. We define
the \emph{$G$-circulant matrix}
\begin{equation}\label{eq:Gcirc}
C_G(X_{g_1},\ldots,X_{g_t})
\end{equation}
as the $t\times t$ matrix whose $ij$'th entry is $X_{g_jg_i^{-1}}$. The matrix $C_G$ depends on the ordering of
the elements of $G$.

On the other hand, let
\begin{equation*}\label{eq:Gcircdet}
\De_{G} := \det C_{G}\,;
\end{equation*}
then both $\De_{G}$ and the eigenvalues of $C_{G}$ are independent of the 
ordering $g_1,\ldots,g_t$,
since the $i$'th and $j$'th rows (respectively, the $i$'th and $j$'th columns) of a $t \times t$ matrix are interchanged
by left multiplication (respectively, right multiplication) by the $t \times t$ matrix $E_{ij} = (a_{\al\be})$, where
$a_{ij} = a_{ji} = 1$, $a_{\al\al} = 1$ if $\al \neq i,j$, and $a_{\al\be} =0$ otherwise.

The eigenvalues of $C_G$ are linear combinations of the indeterminates $X_{g_1},\ldots,X_{g_t}$ (see 
\eqref{eq:eigenval} below and \eqref{eq:eigengen}).

\begin{example}\label{ex:standardcirc}
If $G = \IZ_k$, the (additive) cyclic group of order $k$, then $C_{\IZ_k} = C_k$ is the \emph{(standard)
circulant matrix}
\begin{equation}\label{eq:circ}
C_k(X_0,X_1,\ldots,X_{k-1}) :=
\begin{pmatrix}
X_0 & X_1 & \cdots & X_{k-1} \\
X_{k-1} & X_0 & \cdots & X_{k-2} \\
\vdots & \vdots & \ddots & \vdots \\
X_1 & X_2 & \cdots & X_0
\end{pmatrix}.
\end{equation}
Phillip Davis's book \cite{Davis} is a classic reference on circulant matrices.

The circulant matrix $C_k(X_0,X_1,\ldots,X_{k-1})$ has eigenvectors
$$
V_\ell = (1,\, \ep^\ell,\, \ep^{2\ell},\, \ldots,\, \ep^{(k-1)\ell}),
$$
$\ell = 0, \ldots, k-1$, where $\ep= e^{2\pi i/k}$. The corresponding
eigenvalues are
\begin{equation}\label{eq:eigenval}
Y_\ell = X_0 + \ep^\ell X_1 + \cdots + \ep^{(k-1)\ell}X_{k-1},\quad \ell = 0, \ldots, k-1.
\end{equation}

Let $\De_k = \De_{\IZ_k}$ denote the determinant $\det C_k$. Then
\begin{equation}\label{eq:detcirc}
\begin{aligned}
\De_k (X_0, \ldots, X_{k-1}) &=  Y_0 \cdots Y_{k-1}\\
               &= \prod_{\ell = 0}^{k-1}\, (X_0 + \ep^\ell X_1 + \cdots + \ep^{(k-1)\ell}X_{k-1}).
\end{aligned}
\end{equation}

Given indeterminates $(w,x_0,\ldots,x_{k-1})$, set
\begin{align}\label{eq:circfact}
P_k (w,x_0,\ldots,x_{k-1}) :=&\, \De_k (x_0,\, w^{1/k}x_1,\, \ldots,\, w^{(k-1)/k}x_{k-1})\\
                               =&\, \prod_{\ell = 0}^{k-1}\, (x_0 + \ep^\ell w^{1/k} x_1 + 
                                     \cdots + \ep^{(k-1)\ell}w^{(k-1)/k}x_{k-1})\nonumber
\end{align}
Then $P_k (w,x_0,\ldots,x_{k-1})$ is an irreducible polynomial.
Following \cite{BBR}, we define the \emph{circulant} (or \emph{circulant point}) \emph{singularity} $\cp(k)$ as the
singularity at the origin of the variety $X \subset Z = \IA^{k+1}$ defined by the equation
$P_k (w,x_0,\ldots,x_{k-1}) = 0$; i.e., by the equation
$$
\De_k (x_0,\, w^{1/k}x_1,\, \ldots,\, w^{(k-1)/k}x_{k-1}) = 0.
$$

For example, $\cp(2)$ is the pinch point, given by $P_2(w,z,x) = z^2 - wx^2$,
and $\cp(3)$ is given by
\begin{equation*}\label{eq:cp3expansion}
P_3 (w,z,y,x) = z^3 + wy^3 + w^2 x^3 - 3wxyz.
\end{equation*}

More generally, a \emph{product circulant singularity} $\cp(k_1)\times\cdots\times\cp(k_m)$ is defined by a polynomial
\begin{equation*}
P(w,x) = P_1(w,x^1)\cdots P_m(w,x^m),
\end{equation*}
where $x=(x^1,\ldots,x^m)$, $x^\ell = (x^\ell_0,\ldots,x^\ell_{k_\ell -1})$, $\ell = 1,\ldots,m$, and, for each $\ell$,
\begin{equation*}
P_\ell(w,x^\ell) = \De_{k_\ell} (x^\ell_0,\, w^{1/k_\ell}x^\ell_1,\, \ldots,\, w^{(k_\ell-1)/k_\ell}x^\ell_{k_\ell-1}),
\end{equation*}
(with a single variable $w$ in common).
\end{example}

\smallskip
\begin{definition}\label{def:groupcircintro}
\emph{Group-circulant singularity.}
We define a \emph{group-circulant} or \emph{$\Ga$-circulant singularity}, of \emph{order} $k$,
depending on a finite abelian group $\Ga$ and a positive integer $r$, as the singularity
at the origin of an irreducible polynomial
\begin{equation}\label{eq:normformcoords}
P(w,x) = P(w_1,\ldots,w_r,x_0,\ldots, x_{k-1}),
\end{equation}
where
\begin{equation}\label{eq:normformintro}
P(w,x) = \De_\Ga(X_{\ell_0}, X_{\ell_1},\ldots, X_{\ell_{k-1}})
\end{equation}
and the conditions (1) and (2) following hold.
\begin{enumerate}
\item $\{0 = \ell_0, \ell_1, \ldots, \ell_{k-1}\}$ is an enumeration of the elements of $\Ga$, and
\begin{align*}\label{eq:normform1}
X_{\ell_0} &= x_0,\\
X_{\ell_j} &= w^{\ga_j} x_j,\quad j=1,\ldots, k-1,
\end{align*}
where $w^{\ga_j} = w_1^{\ga_{j1}}\cdots w_r^{\ga_{jr}}$ and, for each $i=1,\ldots,r$, there is a positive integer $p_i$ 
dividing $k$, such that 
\begin{equation*}
\ga_{ji} \in \frac{1}{p_i}\left\{0,\ldots,p_i-1\right\},\quad j=1,\ldots, k-1,
\end{equation*}
with $0\neq \ga_{ji} = \be_{ji}/p_i$ in lowest terms, for some $j$.
\end{enumerate}

\smallskip\noindent
Thus each $p_i$ is the smallest positive integer such that $\ga_{ji} \in \IZ/p_i$, $j=1,\ldots,k-1$.
Let $G$ denote the abelian group $\IZ_{p_1}\times\cdots\times\IZ_{p_r}$. Then $G$ acts on $\IC[w_1^{1/p_1},\ldots,w_r^{1/p_r},x]$
in an evident way. Since $P(w,x)$ is $G$-invariant and $P(w,x)$ is the product of the eigenvalues $Y_j$ of 
$C_{\Ga}(X_{\ell_0}, X_{\ell_1},\ldots, X_{\ell_{k-1}})$, there is an induced homomorphism from $G$ to the group of permutations
of $\{Y_j\}$, and $G$ acts transitively on the set of eigenvalues since $P$ is irreducible.

\smallskip
\begin{enumerate}
\item[(2)] $\Ga = G/H$, where $H$ denotes the stabilizer of any eigenvalue $Y_j$.
\end{enumerate}

\smallskip
More generally, a \emph{product group-circulant singularity} is defined by a polynomial $P(w,x) = P_1(w,x^1)\cdots P_s(w,x^s)$, 
where $x=(x^1,\ldots,x^s)$ and, for each $h=1,\ldots,s$, $x^h=(x^h_0,\ldots,x^h_{k_h-1})$,
$P_h$ is an irreducible polynomial $P_h(w,x^h)$, and $P_h$ defines a $\Ga_h$-circulant singularity as above, for suitable $\Ga_h$
and $r_h \leq r$ (i.e., certain variables $w_i$ may not appear in $P_h$, for given $h$). The \emph{order} $k := k_1 +\cdots + k_s$.
\end{definition}

\begin{remark}\label{rem:groupcircintro}
It follows from Definition \ref{def:groupcircintro} that, at every point of a \emph{codimension one stratum}
$T_i := \{x=w_i = 0,\, w_h\neq 0,\, h\neq i\}$ (i.e., codimension one in $S := \{x=0\}$), \eqref{eq:normformintro} induces
standard product circulant normal form $\cp(p_i)\times\cdots\times\cp(p_i)$ (with $k/p_i$ factors); in particular,
for each $j=0,\ldots,k-1$, $\{\ga_{ji}: j = 0,\ldots,k-1\}$ in Definition \ref{def:groupcircintro} comprises $k/p_i$ copies of $q/p_i$, for each $q=0,\ldots,p_i-1$
(where $\ga_{0i} = 0$, $i=1,\ldots,r$). See 
Theorem \ref{thm:codim1}.

It then also follows that, for any $q<r$, $\De_\Ga$ induces a product group-circulant singularity of order $k$ at any point
of a codimension $q$ stratum
$\{x =w_{i_1} =\cdots = w_{i_q} = 0\,\text{ and }\,w_i \neq 0,\,\text{ if }\,i\neq i_j,\, j=1,\ldots q\}$, where $1 \leq i_1 < \cdots < i_q \leq r$.
\end{remark}

The standard circulant singularity $\cp(k)$ is the unique $\IZ_k$-circulant singularity with $r=1$.
(The isomorphism class of) a group circulant singularity is not uniquely determined
by $\Ga$ and $r$, even in the case that $\Ga = \IZ_k$ if $r>1$; cf. Example \ref{ex:nonirred}).

For the general case (when the stabilizer subgroup $H$ is not necessarily trivial in Definition \ref{def:groupcircintro}), we develop combinatorial
techniques involving the Pontryagin dual $G^* := \Hom(G, \IQ/\IZ)$ of $G$ (see \cite{Pont}) to extend the
simple construction of Example \ref{ex:standardcirc} to group-circulant matrices, in general
(\S\S\ref{subsec:circcomb}, \ref{subsec:codim1combin}). The orthogonal complement of $H$ in $G^*$ is
(non-canonically) isomorphic to $G/H$. (The \emph{character group} $\Hom(G, \IC^*) \cong \Hom(G, \IQ/\IZ)$, 
for $G$ finite abelian.)

We say that an algebraic or analytic variety $X$ has a \emph{group-circulant singularity} at a given point $a$ if there is a local
embedding variety in which the ideal of $X$ at $a$ is generated by a group-circulant singularity as in Definition \ref{def:groupcircintro},
with respect to coordinates in a suitable \'etale or analytic neighbourhood (in particular, $X$ is irreducible at $a$).
If $X$ has a group-circulant singularity as in \eqref{eq:normformintro} at $a$, then we also say that the ideal of $X$ (or a generator
of the ideal) has \emph{$\Ga$--circulant normal form} \eqref{eq:normformintro} at $a$. Likewise for \emph{product group-circulant normal form}.

The following theorem on \emph{reduction to group-circulant normal form by smooth blowings-up preserving normal crossings}
shows how group-circulant singularities come into partial desingularization,
and also plays a key part in the inductive step (induction on $p \leq n+1$) in Theorem \ref{thm:orb1pintro}.

\begin{theorem}\label{thm:ordpintro}
Let $X$ denote a complex algebraic or analytic variey. Let $n := \dim X$ and let $p$ denote a positive integer 
$1\leq p\leq n+1$. Then
there is a morphism $\s_p: X_p \to X$ given by the composite of a finite sequence of admissible smooth blowings-up
(over any relatively compact open subset of $X$, in the analytic case)
preserving the locus $X^{\nc(p)}$ of normal crossings points of order $\leq p$, such that $X_p$ has only hypersurface
singularities and has
maximum order $\leq p$, the subset $S_p$ of points of order $p$ of $X_p$ is smooth and of dimension $n-p+1$ (unless empty),
and $X_p$ has (product) group-circulant singularities of order $p$ at every point of $S_p$ (in particular, $X_p$ is generically $\nc(p)$ on $S_p$).
Moreover,
\begin{enumerate}
\item if $E_p$ denotes the exceptional divisor of $\s_p$, then $E_p$ is transverse to $S_p$, $S_p\backslash E_p$ corresponds
to the $\nc(p)$-locus of $X$, and $X_p$ has a (product) group-circulant singularity at each point of $S_p \cap E_p$, with normal form
(as in Definition \ref{def:groupcircintro}) with the parameters $w_i$ corresponding to components of $E_p$;

\smallskip
\item if $G$ is a group acting on $X$, then the centre of every blowing-up is invariant under the action of $G$, so that the action lifts by every blowing-up, eventually to an action on $X_p$ commuting with $\s_p$.
\end{enumerate}
\end{theorem}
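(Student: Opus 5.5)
We argue by induction on $p$, following the strategy of \cite{BBR} (where the analogous statement is proved for $p \le 4$ with cyclic groups). The main tools are the desingularization invariant $\inv$ of \cite{BMinv} (see the crash course \cite[Appendix A]{BMmin1}), the splitting Theorem \ref{thm:splitintro}, and the combinatorics of $G$-circulant matrices.

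\textbf{Base case and reduction to order $\le p$.} For $p=1$ we resolve the singularities of $X$ outside the smooth locus $X^{\nc(1)}$, by admissible smooth blowings-up whose centres lie in the non-smooth locus. For the inductive step we first reduce to the situation where $X$ has only hypersurface singularities of maximum order $\le p$. Following \cite[Section 3]{BMmin1} and \cite{BBR}, we blow up the (functorial, hence $G$-invariant) maximum strata of $\inv$, restricted to the complement of $X^{\nc(p)}$. At an $\nc(q)$ point the invariant has an explicit value $\inv_{\nc(q)}$, and these points are never centres. We stop once the maximum of $\inv$ outside the closure of the $\nc(p)$ locus is below the value at which $X$ stops being a hypersurface of order $\le p$. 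The locus of order-$p$ points is then a closed subset $S$ on which $X$ is generically $\nc(p)$ (by \cite[Lemma 3.7]{BBR}). We next show that $S$ can be made smooth of dimension $n-p+1$: its components of wrong dimension, and its singular part, are further $\inv$-strata that we blow up.

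\textbf{Making the non-$\nc(p)$ points of $S$ a normal crossings divisor.} The set of non-$\nc(p)$ points of $S$ is a proper closed subset. Using functorial embedded resolution on the smooth variety $S$, with centres extended to the ambient space as in \cite{BBR}, we transform it into $S\cap E$, where $E$ is a simple normal crossings divisor transverse to $S$ that includes the earlier exceptional divisors. At each $a_0\in S\cap E$ we then have coordinates $(w,u,x,z)$ and a Weierstrass polynomial $f$ as in \eqref{eq:weierpoly}. We apply Theorem \ref{thm:splitintro}. Its centres $\{z=x=w_j=0\}$ equal $S\cap\{w_j=0\}$, which are intrinsic, $G$-invariant, and globally defined, since they come from the components of $E$. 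We may therefore assume $f=\prod_i(z+b_i(w^{1/p'},u,x))$.

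\textbf{Normal form (main obstacle).} We must show that, after further blowings-up of the same type (centres $S\cap\{w_j=0\}$ and intersections of these), the factored $f$ becomes product group-circulant, with the $w_i$ the components of $E_p$. The plan has three steps.

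\begin{enumerate}
\item Since $f$ has coefficients in $\IK\llb w,u,x\rrb$, the Galois group $G=\prod\IZ_{p_i}$ of $w\mapsto w^{1/p'}$ permutes the linear parts $\ell_i=z+\sum_j c_{ij}(w^{1/p'},u)x_j$ of the factors. Grouping the $\ell_i$ into $G$-orbits gives the product decomposition $f = f_1\cdots f_s$.
\item For a single orbit with stabilizer $H$, Pontryagin duality for $G^*$ diagonalizes the linear forms, writing them as eigenvalues of a $(G/H)$-circulant matrix in new coordinates $w^{\ga_j}x_j$. The genericity of $\nc(p)$ off $E$ gives the linear independence needed for the $x_j$ to be coordinates, once we divide by suitable monomials $w^{\ga}$.
\item The monomial factors in front of the $x_j$ must have exponents in $\frac1{p_i}\{0,\dots,p_i-1\}$. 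We arrange this by blowing up $S\cap\{w_i=0\}$, which lowers each exponent by $1$. The combinatorial verification of this step, via Remark \ref{rem:groupcircintro} and the codimension-one structure, is the hardest part.
\end{enumerate}

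We then absorb the higher-order terms of the $b_i$ in $x$ by a formal coordinate change. Finally we check that a normal form holding formally implies it in an \'etale or analytic neighbourhood, using the uncountability of $\IK$.

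Property (2) follows because every centre is either a stratum of $\inv$ or an intersection of $S$ with components of $E$, and both are canonical.
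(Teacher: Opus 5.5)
There is a genuine gap in your normal-form step, at exactly the point where the paper relies on the invariant. Blowing up $S\cap\{w_i=0\}=\{z=x=w_i=0\}$ (or an intersection of such centres) never lowers the exponents. In the $w_i$-chart you put $x=w_ix'$, $z=w_iz'$ and divide each factor by $w_i$, so $z+\sum_j c_j(w^{1/p'},u)\,x_j+O(|x|^2)$ becomes $z'+\sum_j c_j(w^{1/p'},u)\,x'_j+w_i\,O(|x'|^2)$. The part linear in $x$ is unchanged, and hence so is every exponent in front of the $x_j$. Only the higher-order terms improve, which is what Theorem \ref{thm:splitintro} exploits.

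Example \ref{ex:basic} already shows this. After three blowings-up of the origin, $f'=z^2+w^3(1+x)x^2$ still has exponent $3/2$, and $\cp(2)$ is reached by blowing up $\{z=w=0\}$, which is not contained in $S$. Your claim in step 2 also fails: generic $\nc(p)$ off $E$ does not make the $x_j$ coordinates once monomials are divided out. For instance, $\De_3(z,\,w^{1/3}x_1,\,w^{2/3}(x_1+wx_2))$ is normal crossings wherever $w\neq0$, but $x_1$ and $x_1+wx_2$ have the same differential at $0$. The origin is not $\cp(3)$, since its order-$3$ locus also contains the $x_2$-axis. Blowing up $S\cap\{w=0\}$ reproduces the same expression. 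Blowing up $\{z=x_1=w=0\}$ instead gives $\De_3(z',\,w^{1/3}x'_1,\,w^{2/3}(x'_1+x_2))$ in the $w$-chart, i.e.\ $\cp(3)$. The same phenomenon occurs across different orbits in your step 1, e.g.\ for $z(z+x_1)(z+x_1+wx_2)$.

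What is missing is the role of $\inv$ on $S$. The paper runs the algorithm until $\max\inv\le\inv(\nc(p))$ and takes $S=\{\inv=\inv(\nc(p))\}$, which is automatically smooth of dimension $n-p+1$ and transverse to $E$. It then explicitly blows up the components of $S$ on which $X$ is not generically $\nc(p)$.

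Your $S$, the order-$p$ locus, is the wrong object, and \cite[Lemma 3.7]{BBR} does not give generic $\nc(p)$ on it. The cone $z^3+x^3+y^3=0$ (times a line) has order $3$ along a line of the expected dimension and is nowhere $\nc(3)$ there. The order-$2$ locus of $z^2+w^3(1+x)x^2$ is $\{z=x=0\}\cup\{z=w=0\}$, and blowing up its singular point reproduces this configuration in the $w$-chart.

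At points of the correct $S$, the coefficient ideal of $(f,p)$ is equivalent to the ideal generated by the $X_\ell^{\,p}=w^{\ga_\ell}\zeta_\ell^{\,p}$, $\ell\in H^\perp\setminus\{0\}$. The value $\inv=(p,0,1,0,\ldots)$ then forces, at each stage, a termwise-minimal exponent with an order-one residual on the successive maximal contact subspaces $N^j$. This is where independent coordinates come from. The paper then removes integer parts of the exponents in two ways:
\begin{itemize}
\item the integer parts $\be_j$ of the exponent increments, by cleaning blowings-up with centres $\{w_i=0\}\cap N^j$;
\item the remaining integer parts, by the blowings-up of \cite[Remark 4.5]{BBR}.
\end{itemize}
These are $\inv_1$-admissible centres not contained in $S$, globalized by \cite[\S4.4]{BBR}. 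Consequently your argument for (2), which assumes every centre is an $\inv$-stratum or some $S\cap\{w_j=0\}$, does not cover the centres actually needed.
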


A blowing-up is called \emph{admissible} if its centre is smooth and snc with the exceptional divisor. 
Group circulant singularities cannot be eliminated without
modifying $X$ at normal crossings points.

Our proof of Theorem \ref{thm:ordpintro} uses the Splitting Theorem \ref{thm:splitintro} and the desingularization invariant of
\cite{BMinv} together with a \emph{cleaning} argument based on that of \cite[Thm.\,1.22]{BBR}; see Theorem \ref{thm:normform}. 

\begin{remark}\label{rem:snc}
A product group-circulant singularity of order $p$ at a point where $X_p$ has $p$ irreducible components is snc.
The earlier results on partial desingularization preserving snc (as in \cite{BDMV}, \cite{BMmin1}), therefore, are direct consequences
of Theorem \ref{thm:ordpintro}, using induction on $p$ (Theorem \ref{thm:orb1pintro} is not needed).
\end{remark}

\begin{remarks}\label{rem:ordpintro} Theorem \ref{thm:ordpintro} can be strengthened in the following ways that are important
to the inductive proof of our partial desingularization theorem \ref{thm:orb1pintro}.

\medskip\noindent
(1) We can also consider an embedded variety $X \hookrightarrow Z$ (where $Z$ is smooth) together with an ordered (reduced)
divisor $E \subset Z$ (i.e., the \emph{components} $E_i$ of $E = \sum_{i=1}^t E_i$ are reduced, but not necessarily irreducible).
If $\tau: Z' \to Z$ is a birational (or bimeromorphic) morphism, then the \emph{transform} $E' \subset Z'$ of $E$ denotes the ordered
divisor $E' = \sum_{i=1}^{t+1} E'_i$, where $E'_i$ denotes the strict transform of $E_i$, $i=1,\ldots,t$, and $E'_{t+1}$ is
the exceptional divisor of $\tau$.

If $X$ is an embedded variety together with an snc divisor $E$ which is transverse to $S_p$, then the theorem holds, where
\emph{admissible} means that the centres of blowing up are smooth and snc with the (transformed) divisor; in this case, moreover,
the conclusion holds with $S_p$ transverse to the final divisor $E_p$.

\medskip\noindent
(2) There is also a version of Theorem \ref{thm:ordpintro} for $X$ together with a divisor $E$, relative to normal crossings
points of order $p$ of the pair $(X,E)$, where the latter means nc of order $p$ of $X \cup E$. This version, in fact, reduces to
the theorem as stated for $X \cup E$.

\medskip\noindent
(3) In (1) and (2), if $X$ and all components of $E$ are invariant with respect to the action of a group $G$, then the
theorem holds with the additional condition that (the group lifts to every blowing-up and) the centre of every blowing-up involved
is $G$-invariant, so that the action lifts to an action on $X_p$ commuting with $\s_p$, and every component of $E_p$ is $G$-invariant.
\end{remarks}

\begin{example}\label{ex:gencirc}
Let $G$ denote the \emph{Klein four-group} $\IZ_2 \times \IZ_2$. Consider the polynomial
$P(w,x) = P(w_1,w_2,x_0,x_1,x_2,x_3)$ given by
\begin{equation*}
P(w,x) = \prod_{i,j = 0,1} \left(x_0 + w_1^{1/2}x_1 + w_2^{1/2}x_2 + w_1^{1/2}w_2^{1/2}x_3\right),
\end{equation*}
where $\ep_2 = e^{2\pi i/2}$. Then $X := V(P)$ is irreducible, and $X$ has a product circulant singularity $\cp(2)\times\cp(2)$ at any point of the 
codimension one stratum $T_i = \{x=w_i=0,\, w_{3-i}\neq 0\}$, $i=1,2$. 
Set
\begin{align*}
Y_{ij} &:= x_0 + \ep_2^i w_1^{1/2} x_1 + \ep_2^j w_2^{1/2} x_2 + \ep_2^i \ep_2^j w_1^{1/2} w_2^{1/2} x_3,\\
X_{k\ell} &:= \frac{1}{4} \sum_{k,\ell = 0,1} \ep_2^{ki - \ell j} Y_{ij},
\end{align*}
and reindex the latter using lexicographic order of pairs: $X_0 := X_{00},\,X_1 := X_{01},\,X_2 := X_{10},\,X_3 := X_{11}$.

Then the $G$-circulant matrix $C_G(X_{00},X_{01},X_{10},X_{11})$ has nested block-circulant form
\begin{equation*}
C_G (X_0,X_1,X_2,X_3) = \left(
\begin{matrix}
X_0 & X_1 & X_2 & X_3 \\
X_1 & X_0 & X_3 & X_2 \\
X_2 & X_3 & X_0 & X_1 \\
X_3 & X_2 & X_1 & X_0
\end{matrix}
\right)
\end{equation*}
(not a standard circulant matrix) and
\begin{equation}\label{eq:klein}
P(w,x) = \De_G (X_0,X_1,X_2,X_3)\\
            = (\det C_G) \left(x_0,\, w_1^{1/2}x_1,\, w_2^{1/2}x_2,\, w_1^{1/2} w_2^{1/2} x_3\right).
\end{equation} 
Nested block-circulant form for a $G$-circulant matrix is a general phenonemon that we do not
pursue in this article, but which is related to Theorems \ref{thm:codim1} and \ref{thm:invtnc}.  
The $G$-circulant singularity \eqref{eq:klein} has smooth normalization, as does $\cp(k)$.
\end{example}

\subsection{Partial desingularization}\label{subsec:partialdesing}
In this subsection, we state and briefly explain Theorem \ref{thm:orb1pintro} on partial desingularization, which will be
proved in Section \ref{sec:wtblup}. Our approach is guided by the lower-dimensional results described in the Overview 
\S\ref{subsec:overview}.

Theorem \ref{thm:orb1pintro} is proved recursively; the inductive step involves Theorem \ref{thm:ordpintro}
followed by a sequence of weighted blowings-up of the group circulant points of the stratum $S_p$. This sequence of
weighted blowings-up of $X_p$ produces an orbifold $\tX_p \to \tX_{p,\quot} =: X'_p$ and birational (or bimeromorphic) morphism
$\tau: X'_p \to X_p$. Here $\tX_p$ denotes the collection of affine orbifold covering charts, and $\tX_p \to \tX_{p,\quot}$
denotes the collection of morphisms from each chart. The exceptional divisor of the sequence of weighted blowings-up
is called a \emph{distinguished divisor} $\tD_p \subset \tX_p$ or $D_p \subset X'_p$ (cf. \S\ref{subsec:overview}).

In general, an \emph{orbifold} $Y\to Y_\quot$ is a collection $Y$ of \emph{covering charts} together with morphisms 
which cover a variety $Y_\quot$, where each covering chart is equipped with an
action of a finite abelian group, and the morphism from the chart to $Y_\quot$ is the quotient morphism followed by an \'etale 
mapping (or open embedding, in the analytic case). See also \S\ref{subsec:wtblup}.

A weighted blowing-up (or a suitable sequence of weighted blowings-up) of an algebraic or analytic variety $X$ determines
an orbifold $\tX \to \tX_\quot$ and a commutative diagram,
\begin{equation}\label{eq:orbtriangle}
\begin{tikzcd}
\tX \arrow{d} \arrow{rd}{\ts} &\\
{\hspace{-.8cm}}X' := \tX_{\quot} \arrow{r}{\s} & X\,,
\end{tikzcd}
\end{equation}
where $\s: X' \to X$ is a birational (or bimeromorphic) morphism, and the other arrows denote collections of
morphisms from the covering charts.

The techniques needed for this construction are presented in Section \ref{sec:wtblup}. Since we use only standard
blowings-up (weighted blowing-up with trivial weights) together with explicit weighted blowings-up of group-circulant
singularities, the construction is relatively simple, and the technology will be developed only in the generality needed.

For weighted blowings-up of group-circulant singularities, we will use results of Abramovich, Temkin and W{\l}odarczyk \cite{ATW19}
and, in particular, \cite{Wlodar} on cobordant
blowing-up along smooth weighted centres, to obtain a well-defined variety $X'$ with
an explicit birational morphism $\s: X' \to X$ in \eqref{eq:orbtriangle}, without recourse to stacks. 
The argument applies also to the complex-analytic case (over a relatively compact open set), essentially
using \cite[Ch.\,1]{AHV} on $\Specan$ (cf.\,\cite[\S1.6]{ABTW}).
In the analytic case, moreover, the result can also be proved using a direct argument for gluing the orbifold quotient charts; this
will be presented in a follow-up article.

Let us begin with an example to illustrate the way we use weighted blowings-up of group-circulant singularities in Theorem \ref{thm:orb1pintro},
For simplicity, we work with a standard (i.e., cyclic group) circulant singularity in this example, but the case of a general (product) group
circulant singularity will be handled in Section \ref{sec:wtblup} in a completely analogous way.

It will be convenient to make a small change in notation from \S\ref{subsec:Gcirc} when we work with weighted blowings-up
of group-circulant singularities, because of the way that the orbifold groups arise in a weighted blow-up: we will write the
cyclic factors of a group $G = \IZ_{p_1} \times \cdots \times \IZ_{p_r}$ as multiplicative cyclic groups $\mu_{p_i}$ rather than
additive groups $\IZ_{p_i}$, so we write $G = \mu_{p_1} \times \cdots \times \mu_{p_r}$ corresponding to the parameters
or coordinates $(w_1,\ldots,w_r)$ in Definition \ref{def:groupcircintro}.

\begin{example}\label{ex:orbcpkintro} We consider the cyclic circulant singularity $\cp(k)$ defined by $\De_k(x_0,w^{1/k}x_1,\ldots,w^{(k-1)/k}x_{k-1})$,
which we write in short form as $\De(w^{j/k}x_j)$ ($j=0,\ldots,k-1$). 
Write $\De = \De_k$.
Consider the weighted blowing-up $\s$
of the origin with weights $(k, k-j+1)$ associated to the parameters $(w, x_j)$.

The weighted blow-up is covered by $k+1$ affine charts, corresponding to the parameters $w,\,x_j$, $j=0,\ldots,k-1$.
In the orbifold $w$-chart, $\s$ is given by the substitution
\begin{align*}
w &= t^k,\\
x_j &= t^{k-j+1}\dx_j,\quad j=0,\ldots,k-1,
\end{align*}
and the group $\mu_k$ acts on the chart by
\begin{equation*}
\ep\cdot (t, \dx_j) := (\ep t, \ep^{-(k-j+1)}\dx_j) = (\ep t, \ep^{j-1}\dx_j), \quad \text{where } \ep \in \mu_k.
\end{equation*}
The group action is free outside the exceptional divisor $\{t=0\}$ (which will be denoted $\tD_k$ in the discussion
of Theorem \ref{thm:orb1pintro} below).

Likewise, the orbifold $x_j$-chart has an action of $\mu_{k-j+1}$, $j=0,\ldots,k-1$.

The strict transform by $\s$ of the stratum $S = \{x=0\}$ intersects only
the $w$-chart (as an invariant smooth subvariety), and
the pullback of $\cp(k)$ is given in this chart by $t^{k(k+1)}\De(\dx_0,\ldots,\dx_{k-1})$, which is normal crossings.

The $w$-chart has quotient given by the following \emph{Hilbert basis} for the invariants (i.e., set of generators 
for the algebra of $\mu_k$-invariant polynomials):
\begin{align*}
W &= t^k,\\
X_0 &= t\dx_0,\\
X_1 &= \dx_1,\\
X_j &= t^{k-(j+1)}\dx_j, \quad 2\leq j\leq k-1,\\
S_{\mu,\la} = S_{\mu,(\la_0,\ldots,\la_{k-1})} &= t^\mu\prod_{j=0}^{k-1} \dx_j^{\la_j},
\end{align*}
where $\mu$ and the $\la_j$ are nonnegative integers such that $\la_1 = 0$ and
\begin{equation*}
\mu + \la_0(k-1) + \sum_{j\geq 1}\la_j(j-1) = \nu k, \quad \nu = \nu_{\mu,\la} \geq 1.
\end{equation*}
(The latter invariants include those preceding.)

The image of $\{\De(\dx_0,\ldots,\dx_{k-1})=0\}$ by the preceding quotient morphism is given by
the ideal generated by 
\begin{equation}\label{eq:quotgen}
W^{-2}\De(WX_0, W^{1+1/k}X_1, W^{j/k}X_j, \, j\geq 2)
\end{equation}
(which is just $\cp(k)$, in standard form after a permutation of $(X_0,\ldots,X_{k-1})$),
together with the relations among the invariants, which include, in particular,
\begin{equation}\label{eq:relinvintro}
\begin{aligned}
\prod_{j\neq 1} X_j^{\la_j} &= t^{\la_0 + \sum_{j\geq 2}\la_j(k-j+1)}\prod_{j\neq 1} \dx_j^{\la_j}\\
&= t^{\la_0 k + \sum_{j\geq 2}\la_j k - (\mu + \la_0(k-1) + \sum_{j\geq 2} \la_j(j-1))} t^\mu \prod_{j\neq 1} \dx_j^{\la_j}\\
&= W^{\sum_{j\neq 1}\la_j - \nu} S_{\mu,\la}.
\end{aligned}
\end{equation}
Note that the Hilbert basis above includes $S_{(0)}=\dx_0^{\la_0}$, where $\la_0(k-1) = \nu_0 k,\, \nu_0\geq 1$,
$S_{(1)} = X_1 = \dx_1$, and
$S_{(j)} = \dx_j^{\la_j}$, where $\la_j(j-1) = \nu_j k,\, \nu_j\geq 1$, for each $j\geq 2$.
Therefore, the relations \eqref{eq:relinvintro} above include $X_0^{\la_0} = W^{\la_0 - \nu_0} S_{(0)}$ and
$X_j^{\la_j} = W^{\la_j -\nu_j}S_{(j)}$,
for each $j\geq 2$. Thus the quotient variety is the intersection of the circulant hypersurface \eqref{eq:quotgen} with the toric variety
defined by the ideal of relations among the invariants.

We will call this quotient an \emph{orbifold circulant singularity} corresponding to $\De(w^{j/k}x_j)$ ($\cp(k)$).
See Example \ref{ex:orbgencirc} for general orbifold group circulant singularities.

The induced morphism from the quotient to the original space is given by
\begin{align*}
w &= t^k = W,\\
x_0 &= t^{k+1}\dx_o = WX_0,\\
x_1 &= t^k \dx_1 = WX_1,\\
x_j &= t^{k-j+1}\dx_j = X_j,\quad j=2,\ldots,k-1.
\end{align*}
Under this morphism, $\De(w^{j/k}x_j)$ pulls back to \eqref{eq:quotgen}\,$\times W^2$.

\begin{remark}\label{rem:orbcpkintro}
Note that the \eqref{eq:relinvintro} can be rewritten as
\begin{equation*}
\frac{S_{\mu,\la}}{W} = W^{\nu -1} \prod_{j\neq 1}\left(\frac{X_j}{W}\right)^{\la_j},
\end{equation*}
which describes how the relation \eqref{eq:relinvintro} transforms by blowing up $\{(W, X_j, S_{\mu,\la})=0\}$
(in the $W$-chart).
The quotient variety is, therefore, transformed by this blowing-up  to the hypersurface $\cp(k)$ \eqref{eq:quotgen}
in the graph of (the set of functions) $S'_{\mu,\la} := S_{\mu,\la}/W$. 
The blowing-up itself is simple toric resolution of singularities of the ideal of relations; cf. \cite[Section 8]{BMtoric}.
\end{remark}
\end{example}

The partial desingularization theorem following involves a positive
integer $p$, as in Theorem \ref{thm:ordpintro}. We are mainly interested in the case $p = \dim X +1$ (so that the open subset $X^{\nc(p)}$ of normal
crossings points of order at most $p$ of $X$ coincides with the locus $X^{\nc}$ of all normal crossings points), but
the statement involving $p$ is useful for induction.

\begin{theorem}\label{thm:orb1pintro}
Let $X$ denote a complex algebraic or analytic variey, and let $p$ be a positive integer $\leq \dim X +1$.
Then there is an orbifold diagram \eqref{eq:orbtriangle},
given by a finite sequence of weighted blowings-up (over any relatively compact open subset of $X$,
in the analytic case), such that the following conditions hold.
\begin{enumerate}
\item Let $\tE$ denote the exceptional divisor of $\tX$ (the collection of exceptional divisors in each covering chart).
Then $\ts|_{\tX\backslash \tE}$ is \'etale over $X^{\nc(p)}$, and 
induces an isomorphism $X'\backslash E' \to X^{\nc(p)}$. 
(The orbifold structure is given by the action of a finite abelian group
in every orbifold coordinate chart of $\tX$, and the group action is free on $\ts^{-1}(X^{\nc(p)})$.)

\smallskip
\item $\tX$ has only normal crossings singularities, of order at most $p$.

\smallskip
\item The induced morphism $\s: X' = \tX_\quot \to X$ is proper, birational (or bimeromorphic), and preserves $X^{\nc(p)}$.

\smallskip
\item For each $q=1,\ldots,p$, let $\tS_q$ denote the subset of nc points of order $q$ of $\tX$. (The collection $\{\tS_q\}$ is a
stratification of $\tX$.) 
Then:
\begin{enumerate}

\smallskip
\item $\tE$ is transverse to every stratum $\tS_q$;

\smallskip
\item
every $\tS_q$ and every component of $\tE$ is invariant with respect to the action of the orbifold covering group (in every chart),
so there are induced strata $S_q := \tS_{q,\quot}$ and an induced divisor $E' := \tE_\quot$ in $X'$;

\smallskip
\item $S_q$ is the closure in $X'\backslash \bigcup_{q<r\leq p} S_r$ of the $\nc(q)$-locus of $X$.
\end{enumerate}

\smallskip
\item For each $q=1,\ldots p$,
there is a \emph{distinguished divisor} $\tD_q \subset \tE$ such that, if $D_q$ denotes  the induced \emph{distinguished divisor}
$\tD_{q,\quot} \subset X$', then:
\begin{enumerate}

\smallskip
\item the orbifold covering groups act freely on $\tX \backslash \sum_{q\leq p} \tD_q$ ; in particular, the projection of the latter 
to $X'$ is \'etale;

\smallskip
\item $X'$ has an orbifold group circulant singularity at every point of $S_q \cap D_q$
outside $\sum_{q<r\leq p} D_r$, and an $\nc(q)$ singularity at every point of $S_q$ outside $\sum_{q\leq r\leq p} D_r$.
\end{enumerate}
\end{enumerate}

\smallskip
Moreover, an action of a group $G$ on $X$ has a unique lifting to an action of $G$ on $X'$, with respect to which 
all strata $S_q$ and all components of $E'$ are invariant, and $\s$ is equivariant.
\end{theorem}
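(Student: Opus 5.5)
The plan is induction on $p$. For $p=1$ the locus $X^{\nc(1)}$ is the smooth locus, and resolution of singularities (e.g. via the invariant of \cite{BMinv}) gives the result: $\tX = X'$ is smooth, the orbifold groups are trivial, $D_1=\emptyset$, and the desingularization is functorial, so group actions lift. For the inductive step, assume the theorem holds for $p-1$, but in the strengthened form of Remarks \ref{rem:ordpintro} (embedded, with an snc divisor $E$, and $G$-equivariant). Apply Theorem \ref{thm:ordpintro} with the given $p$, in this strengthened form, to get $\s_p: X_p\to X$. Then $S_p$ is smooth of dimension $n-p+1$ and transverse to $E_p$, and $X_p$ has product group-circulant normal form of order $p$ along $S_p\cap E_p$, with the parameters $w_i$ given by components of $E_p$. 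Away from $S_p$, $X_p$ has maximum order $\leq p-1$.

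Next, perform weighted blowings-up centred at the group-circulant points $S_p\cap E_p$, modelled on Example \ref{ex:orbcpkintro}. At a point with normal form $\De_\Ga(w^{\ga_j}x_j)$, the weights are chosen so that in the $w_i$-charts the substitution $w_i=t_i^{p_i}$, $x_j = t^{(\cdot)}\dx_j$ clears all fractional exponents. In the chart containing the strict transform of $S_p$, the pullback then becomes a monomial in $t$ times $\De_\Ga(\dx_0,\ldots,\dx_{k-1})$. The latter is a product of the linear eigenvalues $Y_j$, which are independent linear forms, so it is $\nc(p)$. The orbifold group $\mu_{p_1}\times\cdots\times\mu_{p_r}$ acts freely off $\{t=0\}=\tD_p$.

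The key technical point is to show that these local weighted centres patch into a globally well-defined weighted centre. The weights are determined intrinsically by the $\ga_{ji}$ and the divisor components, via the codimension-one behaviour in Remark \ref{rem:groupcircintro} and Theorem \ref{thm:codim1}. One then uses the cobordant blow-up of \cite{ATW19}, \cite{Wlodar} (and \cite{AHV} on $\Specan$ in the analytic case) to obtain $X'$ as a variety with an orbifold cover as in \eqref{eq:orbtriangle}. Because the centres are canonical, the construction is equivariant and group actions lift.

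After this step, the points of order $p$ of $\tX$ are $\nc(p)$ and transverse to $\tE$. All remaining non-nc singularities lie in the complement, where the order is $\leq p-1$. Now apply the inductive hypothesis on each orbifold chart, in its $G$-equivariant embedded form with divisor $\tE$, with the chart group playing the role of $G$. Equivariance makes the further blowings-up descend compatibly to $X'$. The inductive hypothesis must be applied relative to $\nc$ of the pair (Remark \ref{rem:ordpintro}(2)), so that the $\nc(p)$ stratum $\tS_p$ and its transversality to $\tE$ are preserved. Conditions (1)--(5) then follow: the strata $S_q$ are closures of the $\nc(q)$-loci because all centres lie over the complement of $X^{\nc(p)}$, and the distinguished divisors $\tD_q$ are the exceptional divisors of the weighted steps at each level.

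I expect the main obstacle to be the global well-definedness and compatibility of the weighted blowings-up. This needs the weights to be intrinsic along $S_p\cap E_p$, including at deeper strata where product normal forms with different groups meet. It also needs the induction to operate on orbifold charts rather than varieties, so that the lower-order steps commute with the covering group actions.
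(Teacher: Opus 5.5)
Your architecture matches the paper's: Theorem \ref{thm:ordpintro}, then weighted blowings-up of the group-circulant locus in $S_p$, then induction. However, the inductive step fails as you state it.

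Applied to a whole orbifold chart, the $(p-1)$-statement must, by its condition (2), eliminate every point of order $p$. So it modifies $\tS_p$, whose points outside $\tE$ lie over $X^{\nc(p)}$, and this breaks (1) and (3). Remark \ref{rem:ordpintro}(2) does not rescue this: the pair version preserves only points where $X\cup E$ is nc of order $\le p-1$, and on $\tS_p$ the order of $X$ alone is already $p$. The paper instead applies the hypothesis only on the open set $\tX_p\setminus\tS_p$. A deleted neighbourhood of $\tS_p$ consists of nc points of order $<p$, which the $(p-1)$-construction leaves untouched. Hence all its centres avoid a neighbourhood of $\tS_p$ and are closed in $\tX_p$. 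You state the relevant fact ("all remaining non-nc singularities lie in the complement") but draw the wrong conclusion from it.

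There is a related problem with descent. Chart-group equivariance gives birational morphisms of each individual quotient chart. But the $(p-1)$-statement only lifts a group action to $X'$, not to its covering charts, so it does not by itself give orbifold charts with finite abelian groups for the composite. The paper gets the composite of birational morphisms of quotients from W{\l}odarczyk's torus-equivariant cobordant blowings-up \cite{Wlodar}.

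The second gap is the one you flag yourself: nothing in the proposal makes the weighted centres global, and saying the weights are intrinsic is not enough. A cobordant blowing-up needs a canonical weighted centre, and the centre that \cite{ATW19} attach to $X_p$ itself is the wrong one. Where several components of $D$ meet $S_p$, $\ATWinv$ is larger than on the codimension-one strata. In Example \ref{ex:gencirc} it is $(4,6,\ldots)$ at $x=w_1=w_2=0$ but $(4,4,6,\ldots)$ on $T_1$. So the ATW centre sits on $\{x=w_1=w_2=0\}$ and involves both $w_1,w_2$, instead of having support $S_p\cap D_i$ with the weights of $\cp(p_i)\times\cdots\times\cp(p_i)$.

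The missing idea is Lemma \ref{lem:globalcirc}. Treat the components $D_1,\ldots,D_q$ one at a time, in order of history. For $D_i$, regard a local generator of $D_i$ as a free parameter and base change to $\overline{\IK(D^{(i)})}$, where $D^{(i)}$ consists of the other components. After this base change the singularity along $S_p\cap D_i$ is the standard product circulant $\cp(p_i)\times\cdots\times\cp(p_i)$ (Theorem \ref{thm:codim1}). The centre attached to its maximal $\ATWinv$ has maximal contacts given by derivatives in free parameters, so it is defined on $Z_p$. It is canonical, has support $S_p\cap D_i$, and induces the local blowing-up of Example \ref{ex:orbgencirc}. This also makes explicit the sequence you leave implicit: $q$ successive weighted blowings-up, one per component, since $S_p\cap E_p$ itself is not a smooth centre.
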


\begin{remark}\label{rem:orb1pintro} The distinguished divisors $\tD_q$, $q=p,p-1,\ldots,1$, arise in the recursive construction
as the exceptional divisors of weighted blowings-up of group circulant singularities of order $q$ (cf. Example \ref{ex:orbcpkintro}).
To compare with the low-dimensional results recalled in \S\ref{subsec:overview},
let us note the following.

\smallskip\noindent
(1) For any given $q$, the points of the distinguished divisor $D_q$ which lie outside $\sum_{q<r\leq p} D_r$ 
are the neighbouring singularities
(except for $\nc(q)$) of the orbifold group circulant singularities $S_q \cap D_q$.

\medskip\noindent
(2) The points of $D_q \bigcap \sum_{q<r\leq p} D_r$ are limits of these neighbouring singularities. Formulas for the limiting
singularities are given in Theorem \ref{thm:invtnc}. We note that these limiting singularities
are obtained in the inductive construction from the partial desingularization algorithm preserving normal crossings points
of order $< q$.

It follows that, for a given dimension $n$, there are finitely many distinct \'etale coordinate charts, invariant under corresponding 
actions of finite abelian
groups, needed to cover $\tX$, for any $X$. Therefore, a finite number of singular forms in addition to normal
crossings are needed to cover $\tX_\quot$, for any $X$. Moreover, one can give an explicit bound depending only on $n$ and $p$,
on the order of the finite abelian groups acting on the orbifold covering charts; cf. Example \ref{ex:orbgencirc}.
\end{remark}

\begin{remark}\label{rem:strength}
Theorem \ref{thm:orb1pintro} also can be strengthened in the ways indicated in Remarks \ref{rem:ordpintro}.
\end{remark}

Theorem \ref{thm:orb1pintro} will be deduced from Theorem \ref{thm:ordpintro} in Section \ref{sec:wtblup} by induction on $p$, in the following way.
Given a morphism $\s_p: X_p \to X$ which satisfies Theorem \ref{thm:ordpintro}, we construct a finite sequence of weighted blowings-up
of the locus of (product) group-circulant singularities (which are not already $\nc$) in the stratum $S_p$, 
as in Example \ref{ex:orbcpkintro}, after which $\tX_p$ has only normal
crossings, of order $< p$, in a deleted neighbourhood of $\tS_p$. 

Each weighted blowing-up has centre with support given by the intersection of $S_p$ with a component of the exceptional
divisor; at a given group-circulant singularity in $S_p$, the centre is a codimension one stratum in the sense
of Remark \ref{rem:groupcircintro}.

To continue, we then apply the induction hypothesis in the the complement of $\tS_p$; the centres of blowing-up will be isolated from $\tS_p$
(and therefore closed) because $\tX_p$ is already normal crossings of order $< p$ in a deleted neighbourhood of $\tS_p$.
Although the induction hypothesis is applied in the orbifold covering charts, we will show using \cite{Wlodar} that we nevertheless get
a composite of birational morphisms of the quotient varieties.

 \medskip
The final condition (5)(b) of Theorem \ref{thm:orb1pintro} involves orbifold (product) group-circulant singularities in
 the strata $S_q$ of $X'$, $q\leq p$, but we can recover the original group-circulant singularities provided by Theorem
 \ref{thm:ordpintro}, by simple additional blowings-up as in Remark \ref{rem:orbcpkintro} which, in particular, resolve
 the toric singularities of the quotients of the orbifold charts.
 
 \begin{addendum}\label{add:orb1pintro}
Beginning with the morphism $\s: X' \to X$ of Theorem \ref{thm:orb1pintro}, there is an additional sequence of (standard)
 blowings-up of the orbifold group singularities in the strata $S_q$, $q\leq p$, after which these strata satisfy the condition
 (5)(b) with only group circulant singularities.
 \end{addendum}
 
The argument will be given also in Section \ref{sec:wtblup}.

\section{Splitting theorem}\label{sec:splitthm}

In this section, we prove the Splitting Theorem \ref{thm:splitintro}. We use the notation of \S\ref{subsec:split}.

\begin{remark}\label{rem:localglobal}
\emph{Local versus global.} It is enough to prove the assertion of Theorem \ref{thm:splitintro} in some neighbourhood of any given
point $a_0\in S\cap E$. For the global statement, we can blow up the components $F_i := \{z=x=w_i = 0\},\, i=1,\ldots,r$,
of $S\cap E$ in the order determined by the given ordering of $E$, using a bound on the number of blowings-up of each $F_i$
given the local version. We could also use a bound on the powers $p$ given by the local statement, but this is 
unnecessary because $f$ splits over $\IC\llb w^{1/p},u,  x\rrb$, for some $p$, if and only if it splits with $p = k!$,
by \cite[Lemma 3.5]{BBR}. The divisor $E$ is essentially irrelevant to the local assertion, in the sense that we can simply
take $E$ to mean $\{w_1\cdots w_r = 0\}$.
\end{remark}

Our proof of Theorem \ref{thm:splitintro} uses a multivariate Newton-Puiseux theorem due to Soto and Vicente \cite{SV}, in the
same way it is used in the proof of \cite[Thm.\,1.13]{BBR}. We also use \cite[Lemma 3.1]{BBR} and the three lemmas below.
Lemmas \ref{lem:nonsplit} and \ref{lem:Yempty} in the algebraic case hold for any algebraically closed field $\IK$, but the proof
of Theorem \ref{thm:splitintro} requires an uncountable algebraically closed field $\IK$
of characteristic zero.

Given $f$ as in \eqref{eq:weierpoly},
let $Y\subset S\cap E$ denote the set of points $a\in S$ where $f$ does not split; i.e., where $f$ does not split over 
 $\IC\llb w-w_0, u-u_0,  x\rrb$, where $a=(w_0,u_0,0)$.
We will call $Y$ simply the \emph{nonsplitting locus} of $f$ in $S$.

\begin{lemma}\label{lem:nonsplit}
The nonsplitting locus $Y$ is a closed algebraic (or analytic) subset of $S\cap E$.
\end{lemma}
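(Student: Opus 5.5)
Our plan is to characterize splitting at a point $a\in S$ by a condition on the discriminant and the coefficients that is visibly closed, and then show that this condition cuts out a closed subset of $S$ contained in $S\cap E$.

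\textbf{Step 1: $Y\subset S\cap E$.} By hypothesis, $f$ splits formally at every point of $S$ where $w_1\cdots w_r\neq 0$. So $Y\subset S\cap E$, and only closedness needs proof.

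\textbf{Step 2: reduction to a Weierstrass polynomial in $x$.} Work at a point $a=(w_0,u_0,0)\in S$, near which $f$ is a monic polynomial in $z$ with coefficients in the local ring of $(w,u,x)$ at $(w_0,u_0,0)$. Since $f\in(x,z)$, every root $z=-b(w,u,x)$ in the formal ring vanishes on $x=0$. We would use the criterion (cf.\ \cite[Lemma 3.1]{BBR}) that $f$ splits over $\IK\llb w-w_0,u-u_0,x\rrb$ if and only if the discriminant of $f$ is a unit times a polynomial in $x$ of a special form. Concretely, the criterion we aim for is normal crossings transversally: $f$ restricted to a normal slice is a product of $k$ linear forms in $(x,z)$ that are pairwise nonproportional.

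Rather than that, a cleaner route is the following. Consider the initial form of $f$ in $(x,z)$ along $S$, namely the homogeneous degree-$k$ part
$$
f_k(w,u;x,z)=z^k+\sum_i a_{i,i}(w,u;x)z^{k-i},
$$
where $a_{i,i}$ is the degree-$i$ part of $a_i$ in $x$. At points where $f$ splits, $f_k$ factors into linear forms in $(x,z)$ with coefficients formal in $(w,u)$. Generically on $S$ these forms are distinct, since $X$ is $\nc(k)$ there.

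\textbf{Step 3: splitting versus the initial form.} At points where $f_k(w_0,u_0;\cdot)$ is a product of $k$ distinct linear forms, Hensel's lemma in $\IK\llb w-w_0,u-u_0\rrb\llb x\rrb[z]$ applied to the coprime factorization of the initial form gives a full splitting. The nonsplitting locus is therefore contained in the closed set $\Sigma$ where the initial forms become degenerate. On $\Sigma$, splitting of $f$ at $a$ is equivalent to the existence of a formal factorization. We would express this as the following condition: the polynomial
$$
F(z)=f\bigl(w,u,x,z\bigr)\ \text{over } K_a:=\mathrm{Frac}\,\IK\llb w-w_0,u-u_0,x\rrb
$$
has all roots in $\IK\llb w-w_0,u-u_0,x\rrb$. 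By the Abhyankar--Jung / Soto--Vicente theory this means that the Puiseux-type roots (which exist over $w^{1/p}$ after normal crossings of the discriminant) have no fractional exponents.

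\textbf{Step 4: closedness, the main obstacle.} The hard part is showing that the set of $a\in\Sigma$ where splitting fails is closed rather than merely constructible. We would argue by semicontinuity: if $f$ splits at $a$, the roots $b_i$ are formal at $a$, and we must show they converge (algebraically, or analytically via Artin approximation) to splittings at nearby points. The way we would do this is to note that splitting at $a$ means the ring extension $\cO_a[z]/(f)$ has normalization equal to $k$ copies of $\cO_a$. Normalization commutes with completion for excellent rings, and the non-normal / number-of-branches data of $X$ along $S$ is upper semicontinuous (the number of local analytic branches of $X$ through points of $S$ together with their smoothness). Hence the locus where $X$ has exactly $k$ smooth formal branches through $S$ is open in $S$. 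Smoothness of branches means each root $b_i$ is a genuine power series, i.e.\ $f$ splits. Openness of this locus gives closedness of $Y$.
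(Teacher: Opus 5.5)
\textbf{There is a genuine gap, in the one step that carries the proof.} In Step~4 you reach the same characterization the paper uses: $f$ splits at $b\in S$ iff $X$ has $k$ formal branches at $b$, i.e.\ iff the normalization $\tX\to X$ has exactly $k$ points over $b$. That part is fine. But the semicontinuity that should turn this into closedness is stated backwards and is not justified.

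You claim the number of branches along $S$ is \emph{upper} semicontinuous, and deduce that the locus with exactly $k$ branches is open. The number of branches at a point of $S$ is at most $k$, the $z$-degree of $f$. So upper semicontinuity would make that locus \emph{closed}, which gives openness of $Y$, not closedness. Upper semicontinuity is also simply false here. For the pinch point $z^2-wx^2$ there are two branches at every point of the $w$-axis with $w\neq 0$, and only one at the origin.

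What you need is \emph{lower} semicontinuity of the fibre cardinality, and it comes from a fact your sketch never uses. Consider $\tX\to X\to V:=\{z=0\}$. Over points of $S$ its fibres coincide with those of $\tX\to X$, since $f(w,u,0,z)=z^k$. This composite is a finite surjective map of degree $k$ onto a \emph{smooth}, hence normal and locally irreducible, variety. It is therefore an open map, by going-down, or by Remmert's open mapping theorem in the analytic case.

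Now take disjoint neighbourhoods of the $m$ points over $b$. Each maps onto a neighbourhood of $b$, so nearby points of $V$ have at least $m$ preimages. Together with the upper bound $k$, the set where there are exactly $k$ preimages is open.

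Normality of the target is essential. For the normalization of a nodal curve, the fibre cardinality jumps \emph{up} at the node. In the algebraic case you also need Zariski openness, not just openness in the classical topology. The cleanest route is the observation made in the proof of Lemma~\ref{lem:Yempty}: $Y=B\cap S$, where $B$ is the branch locus of $\tX\to V$, and $B$ is closed.

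Steps~2--3 (the tangent-cone Hensel argument giving $Y\subset\Sigma$, and the appeal to Abhyankar--Jung) play no role in closedness and can be dropped.
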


\begin{proof}
The assertion is an immediate consequence
of the fact that $Y$ is the complement in $S$ of the points $b\in S$ where there
are precisely $k$ points in the fibre of the normalization morphism for $X$.
\end{proof}

\begin{lemma}\label{lem:Yempty}
Suppose that, for every irreducible component $F_j = \{z=x=w_j =0\}$ of $S\cap E$, there is a point $a_j \in F_j\backslash Y$.
Then $Y = \emptyset$.
\end{lemma}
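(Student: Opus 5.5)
The plan is a monodromy argument. Along $S\setminus E$ the $k$ local branches of $X$ form a covering, and splitting at a point $a\in S\cap E$ means exactly that this covering is trivial over a small punctured neighbourhood of $a$. Local triviality can then be read off from the loops around the components $F_j$ through $a$.

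First I set up the covering. Work first over $\IC$ (analytic or algebraic), as in the proof of Lemma \ref{lem:nonsplit}, using the normalization $\nu\colon \tilde X \to X$. Over $S\setminus E$ the variety $X$ is $\nc(k)$, so $\nu^{-1}(S\setminus E)\to S\setminus E$ is an unramified covering of degree $k$. The sheets of this covering are the $k$ formal branches $z+b_i$. By the characterization in the proof of Lemma \ref{lem:nonsplit}, $a\in S\cap E$ lies outside $Y$ if and only if $\nu^{-1}(a)$ has exactly $k$ points.

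I expect that this is equivalent to the following: for a small polydisc neighbourhood $U$ of $a$ in $S$, the restricted covering over $U\setminus E$ is trivial, i.e.\ has $k$ connected components. One direction is clear, since $k$ points in $\nu^{-1}(a)$ give $k$ local analytic branches of $\tilde X$ at $a$, each mapping onto a neighbourhood of $a$ in $S$. For the converse, each component of the covering over $U\setminus E$ closes up to a branch of $\tilde X$ over $a$, which gives $k$ points.

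Next I reduce to loops around the $F_j$. Let $F_{j_1},\ldots,F_{j_m}$ be the components of $S\cap E$ through $a$. Then $\pi_1(U\setminus E)\cong \IZ^m$ is abelian, generated by small loops $\gamma_{j_l}$ around $F_{j_l}$. Triviality of the covering over $U\setminus E$ is equivalent to trivial monodromy of each $\gamma_{j_l}$. Now fix $j$. The monodromy of a small loop around $F_j$, taken at a point $b$ of $F_j^\circ := F_j\setminus\bigcup_{i\neq j}F_i$, is a permutation of the sheets that is locally constant in $b$. Since $F_j$ is irreducible, $F_j^\circ$ is connected, so this conjugacy class is independent of $b$; it is also the class of $\gamma_j$ at $a$. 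The hypothesis gives $a_j\in F_j\setminus Y$. Because $Y$ is closed (Lemma \ref{lem:nonsplit}) and $F_j$ is irreducible, $F_j\setminus Y$ is a nonempty Zariski-open subset of $F_j$, hence meets $F_j^\circ$. At a point of $F_j^\circ\setminus Y$ the covering is locally trivial, so the monodromy around $F_j$ is trivial. Therefore every $\gamma_{j_l}$ acts trivially at $a$, the covering over $U\setminus E$ is trivial, and $a\notin Y$. As $a$ was arbitrary, $Y=\emptyset$.

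The main obstacle is the step asserting that a trivial covering over $U\setminus E$ forces $k$ distinct points of $\nu^{-1}(a)$, i.e.\ that the branches extend across $E$ without merging. I would handle it with a Riemann-extension argument. A sheet gives a single-valued root $z=-b_i(w,u,x)$ of $f$ over $(U\setminus E)\times\{|x|<\delta\}$. It is bounded, since the roots of a monic polynomial with bounded coefficients are bounded, so it extends holomorphically across $E$ and yields the factorization \eqref{eq:splitintro} at $a$ with $p=1$. The extended branches then give $k$ distinct points of $\nu^{-1}(a)$, or one can argue directly that $f$ splits at $a$. A subtlety is that the roots are a priori only defined for $x$ in a neighbourhood depending on the base point. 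I would fix a single polydisc around $a$ on which $X\setminus(\text{non-}\nc\text{ locus})$ has the covering structure, using that the non-$\nc(k)$ points of $X$ in $S$ lie in $E$.

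For general algebraically closed $\IK$ of characteristic zero, the same argument runs with the tame étale fundamental group of the complement of an snc divisor in a strict henselization, which is abelian and generated by the inertia of the $F_j$ (Abhyankar's lemma). Alternatively, one could reduce to $\IC$ by the Lefschetz principle, since only finitely many coefficients are involved.
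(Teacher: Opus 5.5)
Your argument fails at exactly the step you flagged as the main obstacle. The implication \emph{trivial covering over $U\setminus E$ $\Rightarrow$ $a\notin Y$} is false: the $k$ sheets can stay distinct over $U\setminus E$ and still merge over $a$.

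Take $k=2$, $r=1$, $q=0$ and $f=z^2-x^2(w^2+x)$. This $f$ is $\nc(2)$ at every point of $S\setminus E=\{x=z=0,\ w\neq 0\}$, with roots $\pm xw\sqrt{1+x/w^2}$. The normalization is $(w,t)\mapsto(w,\,t^2-w^2,\,t(t^2-w^2))$. Over $S\setminus E$ the two sheets are $\{t=w\}$ and $\{t=-w\}$, so the covering over the punctured disc is trivial. Both sheets nevertheless pass through $(0,0)$, so $\nu^{-1}(0)$ is a single point. Indeed $f$ does not split at $0$, since $w^2+x$ is not a square in $\IC\llb w,x\rrb$. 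There is no contradiction with the lemma, because here $Y=F_1=\{0\}$; the example only refutes your intermediate claim.

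Your Riemann-extension repair fails for the same reason. The roots are not holomorphic on any domain $(U\setminus E)\times\{|x|<\delta\}$, because the branch locus $\{x=-w^2\}$ of $\tX\to\{z=0\}$ passes through $a$. Your proposed fix, a uniform polydisc controlled by where the non-$\nc(k)$ points of $S$ lie, cannot help, since this branch locus is not contained in $S$. If instead you pass to Taylor coefficients in $x$ to avoid the shrinking domains, these can have poles along $E$ (here the coefficient of $x^2$ is $1/(2w)$), so there is no boundedness to use. The same objection applies to your \'etale version over general $\IK$.

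The missing idea is this. You use the hypothesis only to kill the monodromy around each $F_j$, which discards the essential information: at points of $F_j\setminus Y$ the roots actually extend across $F_j$. Since $Y$ is closed and each irreducible $F_j$ has a point outside $Y$, $Y$ has codimension at least $2$ in $S$. What must be shown is that the splitting extends across a codimension-two set.

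The paper does this algebraically with Zariski--Nagata purity. The branch locus $B$ of $\tX\to\{z=0\}$ is a hypersurface with $B\cap S=Y$, so a nonempty $Y$ would have codimension one in $S$. In the analytic case the paper glues the Taylor coefficients $b_{i\alpha}(w,u)$ of the roots over the simply connected set $(S\setminus Y)\cap U$ and extends them across $Y$ by Hartogs, with no boundedness needed. Once you extend across $(F_j\cap U)\setminus Y$ in this way, your monodromy computation becomes superfluous, since $U\setminus Y$ is already simply connected.
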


\begin{proof}
By Lemma \ref{lem:nonsplit}, $Y$ is a closed algebraic (or analytic) subset of $S\cap E$. The hypothesis therefore implies
that $Y$ has codimension at least $2$ in $S$. 

The proof of Lemma \ref{lem:Yempty} in the algebraic case involves an argument of the nature of Hartog's theorem---see
Remark \ref{rem:Yempty} following for the analytic case. Set $V := \{z=0\}$ and let $\tX \to X$ denote the normalization
of $X:= \{f(w,u,x,z) = 0\}$. Consider the branched covering $p: \tX \to X\to V$ of $V$, and let $B\subset V$ denote the
branch locus (the subset of $V$ over which $p$ is not \'etale). Then $B\cap S = Y$.

Over any algebraically closed field $\IK$, $B$ is a hypersurface $\{h(w,u,x)=0\}$ in $V$, by purity of the branch locus \cite{Nag1, Nag2}.
Suppose $Y\neq\emptyset$. Then $Y=B\cap S$ is a hypersurface $\{h(w,u,0)=0\}$ in $S$, in contradiction to $\codim Y \geq 2$.
\end{proof}

\begin{remark}\label{rem:Yempty}
To prove Lemma \ref{lem:Yempty} in the complex-analytic case,
we can assume there is an open polydisk $U$ with respect to the coordinates $(w,u,x)$ of $\{z=0\}$, centred at $a_0 =0$, such that,
for every $a\in (S\backslash Y)\cap U$, $f$ has $k$ local analytic roots $b_{a,i}(w,u,x)$, $i=1,\ldots,k$; say
\begin{equation*}
b_{a,i}(w,u,x) = \sum_{\al \in \IN^{k-1}} b_{a,i \al}(w,u)x^\al,
\end{equation*}
defined in a smaller polydisk $U_a\subset U$ centred at $a$, where the $b_{a,i \al}$ are analytic functions on $S\cap U_a$.
If $a,\,b\in (S\backslash Y)\cap U$ and $U_a\cap U_b\neq \emptyset$, then the roots at $a$ and $b$ (suitably ordered)
glue together on $U_a\cup U_b$.

Since $Y$ has codimension $\geq 2$ in $S$, $(S\backslash Y)\cap U$ is simply connected. By analytic continuation,
therefore, the coefficients
$b_{a,i \al}(w,u)$ of the local roots glue together to define analytic functions $b_{i \al}(w,u)$ on $(S\backslash Y)\cap U$. Now, by
Hartog's theorem, the $b_{a,i \al}$ extend to analytic functions on $S\cap U$, and the result follows.
 \end{remark}

\begin{lemma}\label{lem:generic}
Let $\IK$ denote an algebraically closed field of characteristic zero. For every $\la \in \overline{\IK(u)} = \overline{\IK(u_1,\ldots,u_q)}$,
there is a proper algebraic subset $W$ of $\IK^q$ such that, at every point $a=(a_1,\ldots,a_q) \in \IK^q\setminus W$, 
$\la$ is given by an element of the power series ring $\IK\llb u-a \rrb$.
\end{lemma}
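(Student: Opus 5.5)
The plan is to reduce to a single algebraic element over the rational function field and then use the formal inverse function theorem. Since $\la$ is algebraic over $\IK(u)$, it satisfies a minimal polynomial
\begin{equation*}
m(T) = T^d + c_1(u)T^{d-1} + \cdots + c_d(u),\quad c_i \in \IK(u),
\end{equation*}
which is irreducible over $\IK(u)$. Clearing denominators, I would write $c_i = g_i/g_0$ with $g_0,\ldots,g_d \in \IK[u]$ and $g_0 \neq 0$. The relevant objects are then
\begin{equation*}
M(u,T) := g_0 T^d + g_1 T^{d-1} + \cdots + g_d \in \IK[u][T]
\end{equation*}
and its discriminant $\de(u) \in \IK(u)$, taken with respect to $T$. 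Because $\IK$ has characteristic zero, $m$ is separable, so $\de \neq 0$. After multiplying by a suitable power of $g_0$, I may take $\de \in \IK[u]$.

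I would take $W := \{g_0\,\de = 0\} \subset \IK^q$, which is a proper algebraic subset. Fix a point $a \notin W$. The specialized polynomial $M(a,T) \in \IK[T]$ has degree $d$ and nonzero discriminant. Since $\IK$ is algebraically closed, it has $d$ distinct simple roots $t_1,\ldots,t_d \in \IK$.

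For each simple root $t_i$, Hensel's lemma applies in $\IK\llb u-a\rrb$, equivalently the formal implicit function theorem, because $\partial M/\partial T (a,t_i) \neq 0$. This gives a unique power series $\theta_i \in \IK\llb u-a\rrb$ with $\theta_i(a) = t_i$ and $M(u,\theta_i)=0$. The $\theta_i$ are pairwise distinct, since their constant terms differ. Hence $m(T) = \prod_i (T-\theta_i)$ in $\IK\llb u-a\rrb[T]$, because $g_0$ is a unit there.

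It remains to interpret the statement ``$\la$ is given by an element of $\IK\llb u-a\rrb$'' correctly. The point is that $\IK(u)$ embeds in the fraction field of $\IK\llb u-a\rrb$. Every root of $m$ there is one of the $\theta_i$, so any $\IK(u)$-embedding of $\IK(u)(\la)$ into $\mathrm{Frac}\,\IK\llb u-a\rrb$ sends $\la$ to some $\theta_i$. Such embeddings exist: there are exactly $d$ of them, one for each $\theta_i$. Thus $\la$ is represented by a power series at $a$, determined up to the choice of conjugate.

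I expect no serious obstacle. The only delicate point is this bookkeeping about which conjugate represents $\la$, and it is the natural reading of the statement. If a canonical choice of root is needed, the continuity argument used later in the paper (gluing roots along $S\backslash Y$) can fix one.
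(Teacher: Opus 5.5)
Your proof is correct and is essentially the paper's argument. You choose a polynomial over $\IK[u]$ that has $\la$ as a root and nonzero discriminant, take $W$ to be where the discriminant vanishes, and note that at $a\notin W$ the polynomial splits into distinct simple roots in $\IK\llb u-a\rrb$ (by Hensel's lemma), one of which represents $\la$.

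Adding the leading coefficient $g_0$ to $W$ is a genuine refinement that the paper leaves implicit. For instance, $ut^2+t+1$ has discriminant $1-4u\neq 0$ at $u=0$, yet one of its roots has a pole there. Also, your argument shows every root of $m$ lies in $\IK\llb u-a\rrb$, so any fixed embedding of $\overline{\IK(u)}$ into $\overline{\IK\llbr u-a\rrbr}$ works, and no continuity argument is needed to pick a conjugate.
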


\begin{proof}
Note that the tautological inclusion $\IK[u] \subset \IK\llb u\rrb$ identifies the algebraic closure $\overline{\IK(u)}$ as
the subfield of $\overline{\IK\llbr u\rrbr}$ consisting of elements of the latter that are algebraic over $\IK(u)$ (or over
$\IK[u]$, by clearing denominators). Given a point $u=a \in \IK^q$, there is a canonical isomorphism $u \mapsto u + (u-a)$
of $\IK(u)$ with $\IK(u-a)$.

Given $\la \in \overline{\IK(u)}$, there is a polynomial $P(u,t)$ in $t$ with coefficients in $\IK[u]$, such that $\la$ is a 
root of $P(u,t)=0$ and the discriminant $\De(u)$ of $P$ is not identically zero.

Let $a\in \IK^q\backslash W$, where $W$ is the zero-set of $\De$. If $m=\text{degree}\,P$, then $P(u,t) = P(a+(u-a), t)$
splits at $a$ with $m$ distinct roots $t=b_i(u-a) \in \IK\llb u-a\rrb \subset \overline{\IK\llbr u-a\rrbr},\, i=1,\ldots,m$, 
so that $\la = b_i$, for some $i$.
\end{proof}

\begin{proof}[Proof of Theorem \ref{thm:splitintro}]

\noindent
\emph{Case} $r=1$. Let $\IL$ denote the algebraic closure $\overline{\IC(u)}$ of the field of fractions of $\IC[u] = \IC[u_1,\ldots,u_q]$.
Let us write $(y_1,\ldots,y_k) := (x_1,\ldots,x_{k-1}, w) \allowbreak = (x,w)$.

Let $\SLplus(k,\IZ)$ denote the multiplicative subsemigroup of $\SL(k,\IZ)$ consisting of 
upper-triangular matrices
$$
A = \left(\begin{matrix}
1 & a_{12} & \cdots & a_{1k}\\
0 & 1         & \cdots & a_{2k}\\
\vdots  & \vdots  & \ddots & \vdots\\
0 & 0         & \cdots & 1
\end{matrix}\right)\,,
$$
where the $a_{ij}$ are nonnegative integers. Clearly, $\SLplus(k,\IZ)$ acts on monomials
$y^\al = y_1^{\al_1}\cdots y_k^{\al_k}$ by $y^\al \mapsto y^{\al A}$, $A \in \SLplus(k,\IZ)$, where
$$
\al A := (\al_1,\ldots,\al_k)\cdot\left(\begin{matrix}
1 & a_{12} & \cdots & a_{1k}\\
0 & 1         & \cdots & a_{2k}\\
\vdots  & \vdots  & \ddots & \vdots\\
0 & 0         & \cdots & 1
\end{matrix}\right)\,.
$$
Write $\psi_A(y^\al) := y^{\al A}$. Of course, $\psi_A$ extends to an operation on
$\IL\llb y \rrb = \IL\llb y_1,\ldots, y_k\rrb$, and to an operation on $\IL\llb y\rrb [z]$ (by the
preceding operation on coefficients), which we also denote $\psi_A$, in each case.

Since $\psi_A$ takes $y_k = w \mapsto w$ and (for each $i=1,\ldots,k-1$) takes $y_i \mapsto y_i$
times a monomial in $(y_{i+1},\ldots,y_{k-1}, w)$ (the monomial with exponents given by the $i$th row
of $A$), we see that $\psi_A$ also makes sense as an operation on $\overline{\IL(w)}\llb y_1,\ldots,y_{k-1}\rrb$,
or on $\overline{\IL\llbr w\rrbr}\llb y_1,\ldots,y_{k-1}\rrb$.

By the theorem of Soto and Vicente \cite{SV}, there exists a positive integer $p$ such that $f$ splits
in $\IL\llbr y_k^{1/p}\rrbr \cdots \llbr y_1^{1/p}\rrbr [z]$ and, moreover, there exists $A \in \SLplus(k,\IZ)$ such
that $\psi_A(f)$ splits in $\IL\llb y_1^{1/p},\ldots, y_k^{1/p}\rrb [z]$. Let $c_i \in \IL\llb y_1^{1/p},\ldots, y_{k-1}^{1/p}, w^{1/p}\rrb$,
$i=1,\ldots,k$, denote the roots of $\psi_A(f)$.

By \cite[Lemma 3.1]{BBR}, $f$ splits in $\overline{\IL(w)}\llb y_1,\ldots, y_{k-1}\rrb [z]$. Let $b_i \in \overline{\IL(w)}\llb y_1,\ldots, y_{k-1}\rrb
\subset \overline{\IL\llbr w\rrbr}\llb y_1,\ldots, y_{k-1}\rrb$, $i=1,\ldots,k$, denote the roots of $f$.

Since $\IL \llb y_1^{1/p},\ldots, y_{k-1}^{1/p}, w^{1/p}\rrb$ is a unique factorization domain, 
the set $\{c_i\}$ of roots of $\psi_A(f)$ coincides with the set $\{\psi_A(b_i)\}$;
i.e., each $c_i \in \IL \llb y_1,\ldots, y_{k-1}, w^{1/p}\rrb$.

Note that, given any monomial $y_1^{\al_1}\cdots y_{k-1}^{\al_{k-1}}$, $w=y_k$ appears in 
$\psi_A(y_1^{\al_1}\cdots y_{k-1}^{\al_{k-1}})$
to the power $\sum_{j=1}^{k-1} \al_j a_{jk}$; i.e., to a power at most $d\mu$, where $d$ is the degree $\al_1 +\cdots +\al_{k-1}$ and
$\mu = \max\{a_{jk}\}$.

It follows that blowing up $\mu$ times with centre $\{z=x_1=\cdots =x_{k-1} =w=0\}$ will clear all denominators in the roots $b_i$.
After these blowings-up, therefore, we can assume that $f(v^p,u,x,z)$ splits over $\overline{\IC(u)}\llb v,x\rrb$; i.e., splits with roots
\begin{equation*}
b_i(v,u,x) = \sum_{j\in \IN,\,\al\in\IN^{n-1}} b_{i,j\al}(u) v^j x^\al \in \overline{\IC(u)}\llb v,x\rrb,\quad i=1,\ldots,k.
\end{equation*}

By Lemma \ref{lem:generic}, for every $i,j,\al$, there is a closed algebraic subset $W_{i,j\al}$ of $\IC^q$ such that
$b_{i,j\al} \in \IC\llb u-u_0\rrb$, for every $u_0 \in \IC^q\backslash W_{i,j\al}$. Since 
a smooth variety over any uncountable algebraically closed field is not a countable union of proper subvarieties
(or, by the Baire category theorem, in the analytic case), there
exists $u_0 \in \bigcap_{i,j,\al} \left(\IC^q\backslash W_{i,j\al}\right)$, and it follows that $f$ splits over $\IC\llb v,u-u_0,x\rrb$.
The result for $r=1$ then follows from Lemma \ref{lem:Yempty}.

\emph{Case} $r>1$. We can treat  the $F_j = \{z=x=w_j = 0\}$, $j=1,\ldots,r$, one at a time, as in the case $r=1$ above, 
in order to conclude that, after finitely many blowings-up with centres of the form $F_j$, for each $j$, there is a positive integer $p$ such that $f(v^p,u,x,z) = f(v_1^p,\ldots,v_r^p.u,x,z)$ splits at a point (any point of a dense subset) of each $\{z=x=v_j = 0\}$.
The result follows again from Lemma \ref{lem:Yempty}.
\end{proof}

\section{Group-circulant normal form}\label{sec:circ}

We begin by showing how the Splitting Theorem \ref{thm:splitintro} leads to the introduction of a finite abelian group $\Ga$ and
the idea of a $\Ga$-circulant singularity or $\Ga$-circulant normal form (Definition \ref{def:groupcircintro}). The techniques used are
combinatorial, and involve the \emph{Pontryagin dual} of a finite abelian group \cite{Pont}. We prove that group-circulant normal
form induces standard product circulant normal form at every point of a codimension one stratum (Theorem \ref{thm:codim1}; see Remark
\ref{rem:groupcircintro}). Finally, we prove Theorem \ref{thm:ordpintro} on reduction to group-circulant normal form.

\medskip
Let $f(w,u,x,z)$ denote a function \eqref{eq:weierpoly} satisfying the conclusion of Theorem \ref{thm:splitintro}. Thus
$f(v_1^p,\ldots,v_r^{p},u,x,z)$
splits over $\IC\llb v,u,  x\rrb $, for some $p\in\IN$. Let $\cS_k$ denote the group of permutations of the roots of 
$f(v_1^p,\ldots,v_r^{p},u,x,z)$.

Assume that $f(w,u,x,z)$ is irreducible. Then
$(\IZ_{p})^r$ maps onto a subgroup of $\cS_k$ which acts transitively on the
roots. It follows (as in the proof of \cite[Lemma 3.5(2)]{BBR}) that 
\begin{equation}\label{eq:splitp}
f(v_1^{p_1},\ldots,v_r^{p_r},u,x,z)
\end{equation} splits, 
where, for each $i=1,\ldots,r$,\, $p_i \leq k$ and the group $\IZ^{(i)}_{p_i} := \{0\}^{i-1} \times\IZ_{p_i}\times\{0\}^{r-i}$ maps
onto a cyclic subgroup  $\IZ_{p_i}$ of $\cS_k$. Let $G$ denote the abelian group $\IZ_{p_1} \times \cdots \times \IZ_{p_r}$ 
of order $p_1\cdots p_r$.

We can write
\begin{equation}\label{eq:splitp1}
f(v_1^{p_1},\ldots,v_r^{p_r},u,x,z) = \prod_{j=1}^k (z + b_j(v,u,x)),
\end{equation}
where each $b_j \in \IC\llb v,u,x\rrb$. Since $f$ is irreducible, taking $b := b_{j_0}$, for any $j=j_0$, 
all roots $-b_j$ of \eqref{eq:splitp1} 
are of the form 
\begin{equation}\label{eq:roots.1}
-b(\ep_{p_1}^{j_1} v_1,\ldots,\ep_{p_r}^{ j_r}v_r, u,x),\quad j_i =0,\ldots, p_i-1,\ \ i=1,\ldots, r,
\end{equation}
where $\ep_p$ denotes $e^{2\pi i/p}$, so that $\ep_{p_i} = \ep_k^{k/p_i}$.

 The group $G$ acts
on $\IC\llb v,u,x,z\rrb$ as in \eqref{eq:roots.1}; i.e., $j=(j_1,\ldots,j_r) \in G$ takes $\vp(v,u,x,z) \in \IC\llb v,u,x,z\rrb$
to $\vp(\ep_{p_1}^{j_1} v_1,\ldots,\ep_{p_r}^{ j_r}v_r, u,x,z)$. This action induces a homomorphism of $G$ onto an abelian
subgroup of the group
$\cS_k$ of permutations of the roots of \eqref{eq:splitp1}, which acts transitively on the roots.

For each $i$, $p_i$ divides $k$, since the translates of $\IZ_{p_i} \subset S_k$ by the elements of $G$ provide 
a partition of the set of roots of \eqref{eq:splitp1} into subsets of $p_i$ elements. 

Since $G$ is abelian, the stabilizer $H$ of any root by the action of $G$
is independent of the root; $H$ is a subgroup of $G$ of order $|H| = p_1\cdots p_r/k$.

For each $j=(j_1,\ldots,j_r) \in G$, set
\begin{equation}\label{eq:factor}
Y_j := z + b(\ep_{p_1}^{j_1} v_1,\ldots,\ep_{p_r}^{ j_r}v_r, u,x).
\end{equation}
Generalizing \cite[Sect.\,2]{BBR}, write
\begin{equation}\label{eq:circinv.1}
X_\ell := \frac{1}{p_1\cdots p_r} \sum_{j_1=0}^{p_1-1} \cdots \sum_{j_r=0}^{p_r-1} \ep_{p_1}^{\ell_1(p_1 -j_1)}\cdots
 \ep_{p_r}^{\ell_r(p_r -j_r)} Y_j,\quad \ell \in G.
 \end{equation}
 
 In \S\ref{subsec:circcomb} following, $Y_j$, $j\in G$, denote independent variables. The expressions
 \eqref{eq:factor} for the factors of
$f(v_1^{p_1},\ldots,v_r^{p_r},u,x,z)$ play no part in \S\ref{subsec:circcomb}, but will reappear
 in an important way in the following subsections.

 \subsection{Circulant combinatorics}\label{subsec:circcomb}
 Let $Y_j$, $j=(j_1,\ldots,j_r) \in G$, denote indeterminates, and define $X_\ell$, $\ell = (\ell_1,\ldots, \ell_r) \in G$,
 by \eqref{eq:circinv.1}. Thus \eqref{eq:circinv.1}
 defines a linear transformation $W\to V$, where $W$ and $V$ are copies of $\IC^{|G|} = \IC^{p_1\cdots p_r}$. This transformation
 is invertible, and its inverse is given by
 \begin{equation}\label{eq:circ.1}
 Y_j = \sum_{\ell_1=0}^{p_1-1} \cdots \sum_{\ell_r=0}^{p_r-1} \ep_{p_1}^{j_1\ell_1}\cdots
 \ep_{p_r}^{j_r\ell_r}X_\ell,\quad j \in G
 \end{equation}
(cf. \cite[(2.2)]{BBR}).

Let $k$ denote the least common multiple (or any common multiple) of $p_1,\ldots,p_r$. 
Note that $\ep_{p_i}^{\ell_i(p_i -j_i)} = \ep_k^{-(k/p_i)\ell_i j_i}$.

\begin{remark}\label{rem:groups}
$\IZ_{p_i}$ \emph{as a subroup of} $\IZ_k$. The inclusion homomorphism $\IZ_{p_i} \hookrightarrow \IZ_k$,
as additive groups of integers mod $p_i$ or $k$, takes $1 \in \IZ_{p_i}$ to $k/p_i \in \IZ_k$. Regarding $\IZ_{p_i},\, \IZ_k$
as multiplicative groups of roots of unity, this means simply writing $\ep_{p_i}$ as $\ep_k^{k/p_i}$.
\end{remark}

Write $\ep := \ep_k$ for brevity. We can rewrite the equations \eqref{eq:circ.1}, \eqref{eq:circinv.1} as
\begin{equation}\label{eq:circ.2}
 Y_j =  \sum_{\ell \in G} \ep^{\langle j, \ell\rangle} X_\ell,\quad j \in G,
 \end{equation}
 with inverse 
 \begin{equation}\label{eq:circinv.2}
X_\ell = \frac{1}{p_1\cdots p_r} \sum_{j\in G} \ep^{- \langle \ell, j\rangle} Y_j, \quad \ell \in G,
\end{equation}
where $\langle j, \ell\rangle$ denotes the ``weighted scalar product'' 
\begin{equation*}\label{eq:angle}
\langle j, \ell\rangle := \sum_{i=1}^r \frac{k}{p_i}\, j_i \ell_i\,,\quad
j=(j_1,\ldots j_r),\, \ell = (\ell_1,\ldots, \ell_r) \in G.
\end{equation*}

If $Y_j$ is given by \eqref{eq:factor}, then $Y_j = Y_{j+h}$ if and only if $h\in H$. In general, let $H$ denote any subgroup
of $G$, and let $W_H$ denote the linear subspace of $W$ defined by the equations
\begin{equation*}
Y_j = Y_{j+h}, \quad j \in G,\ h \in H.
\end{equation*}
Then $\dim W_H = |G/H|$.

Let $t:= |G/H|$ and choose representatives $j^0,\ldots j^{t-1} \in G$ of the distinct elements of $G/H$. Then, for every $\ell\in G$,
\begin{equation}\label{eq:circinv.3}
X_\ell = \frac{1}{p_1\cdots p_r} \sum_{i=0}^{t-1}\sum_{h\in H} \ep^{- \langle \ell, j^i + h\rangle} Y_{j^i + h}\,.
\end{equation}
If $Y = (Y_j)_{j\in G} \in W_H$, then
\begin{equation}\label{eq:circinv.4}
X_\ell = \frac{\xi_\ell}{p_1\cdots p_r} \sum_{i=0}^{t-1} \ep^{- \langle \ell, j^i \rangle} Y_{j^i},
\end{equation}
where
\begin{equation}\label{eq:xi}
\xi_\ell := \sum_{h\in H} \ep^{- \langle \ell, h \rangle} .
\end{equation}

Let
\begin{equation*}
K := \left\{\ell \in G: \ep^{\langle \ell, h\rangle} = 1,\ \text{for all } h\in H\right\};
\end{equation*}
i.e., $K = \{\ell\in G: \langle \ell, h\rangle \equiv 0 \mod k,\ \text{for all } h\in H\}$, so it makes sense to call $K$ the
``orthogonal complement'' $K = H^\perp$ of $H$ in $G$. It is easy to see that $H^\perp$ is a subgroup of $G$.

\begin{example}\label{ex:perp}
Let $G = \IZ_k \times \cdots \times \IZ_k$ ($r$ times), and let $H$ denote the subgroup $\IZ_{p_1} \times \cdots \times \IZ_{p_r}$ of
$G$. Then $H^\perp = \IZ_{k/p_1} \times \cdots \times \IZ_{k/p_r}$.

In particular, if $G=\IZ_4$ and $H=\IZ_2 \subset \IZ_4$, then $H^\perp = H$ and the canonical homomorphism
$H^\perp \hookrightarrow G \to G/H$ is not an isomorphism.
\end{example}

\begin{lemma}\label{lem:isom}
The orthogonal complement $H^\perp$ is (non-canonically) isomorphic to $G/H$.
\end{lemma}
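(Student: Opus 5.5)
The plan is to use Pontryagin duality, realized concretely through the weighted pairing $\langle\cdot,\cdot\rangle$. Define
\[
\Phi: G \to G^* = \Hom(G,\IQ/\IZ), \qquad \Phi(\ell)(j) := \langle \ell, j\rangle/k \mod \IZ .
\]

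First we check that $\Phi$ is an isomorphism. For $\ell,j\in G$,
\[
\langle \ell, j\rangle/k = \sum_i \ell_i j_i/p_i \mod \IZ,
\]
so $\Phi$ is well defined and additive in each variable. It is injective: if $\Phi(\ell)=0$, test against the generator $e_i$ of $\IZ_{p_i}$ to get $\ell_i/p_i\in\IZ$, hence $\ell_i=0$ in $\IZ_{p_i}$. Since $|G^*|=|G|$ for a finite abelian group, which follows from $\Hom(\IZ_{p},\IQ/\IZ)\cong\IZ_p$ and additivity over the product, $\Phi$ is bijective.

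Under $\Phi$, the subgroup $H^\perp=K$ is exactly the set of $\ell$ with $\Phi(\ell)|_H=0$. Hence
\[
K \cong \{\chi\in G^*: \chi|_H = 0\} = \Hom(G/H,\IQ/\IZ) = (G/H)^*.
\]

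It remains to show that the finite abelian group $A:=G/H$ is (non-canonically) isomorphic to $A^*$. Decompose $A\cong \prod_m \IZ_{n_m}$ by the structure theorem. Then $A^*\cong\prod_m \Hom(\IZ_{n_m},\IQ/\IZ)$, and $\Hom(\IZ_n,\IQ/\IZ)\cong\IZ_n$ via $\chi\mapsto n\chi(1)$. This choice of decomposition is the source of non-canonicity.

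As an alternative route, one can count: if $\ep^{\langle\ell,h\rangle}\ne1$ for some $h\in H$, then the character sum $\xi_\ell$ of \eqref{eq:xi} vanishes. The main subtle point is the surjectivity of restriction $G^*\to H^*$ (divisibility of $\IQ/\IZ$), which gives $|K| = |G|/|H|$, but the duality argument above already handles this. The only genuinely nontrivial step is identifying $K$ with $(G/H)^*$ via the nondegenerate pairing; the rest is standard.
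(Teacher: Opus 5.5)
Your proposal is correct and follows essentially the paper's route: the pairing-induced isomorphism $G\to G^*$ carries $H^\perp$ onto the annihilator of $H$ in $G^*$, which is $(G/H)^*\cong G/H$. The only differences are cosmetic. You get bijectivity of $\Phi$ from injectivity plus $|G^*|=|G|$, whereas the paper exhibits preimages directly. You also spell out the final step $(G/H)^*\cong G/H$ via the structure theorem, which the paper leaves implicit.
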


 \begin{proof}
We use the \emph{(Pontryagin) dual} $G^*$ of $G$ \cite{Pont}. The dual $G^*$ of a finite abelian group $G$ is defined as
\begin{equation}\label{eq:dual}
G^* := \Hom(G, \IQ/\IZ).
\end{equation}
(The \emph{character group} $\Hom(G, \IC^*) \cong \Hom(G, \IQ/\IZ)$, for $G$ finite abelian.)

Suppose $G$ is a finite abelian group. First we observe that $G^*$ is non-canonically isomorphic to $G$. We can assume
that $G = \IZ_{p_1}\times\cdots\times \IZ_{p_r}$. Then 
\begin{equation}\label{eq:oplus}
\Hom(G, \IQ/\IZ) = \bigoplus_{i=1}^r \Hom(\IZ_{p_i},\IQ/\IZ)
\end{equation}
(if $f \in \Hom(G, \IQ/\IZ)$, we write $f = (f_1,\ldots,f_r)$ with respect to \eqref{eq:oplus}).
So it is enough to show that $(\IZ_p)^* \cong \IZ_p$.
Any $f \in \Hom(\IZ_{p},\IQ/\IZ)$ takes $1$ to $f(1) \in \IQ/\IZ$, and $f(1)$ has a representative $a/b \in \IQ$ with $a<b$,
in lowest terms. Since $pf(1) = 0$, we have $pa/b \in \IZ$; hence $b|p$ and
$$
\frac{a}{b} = \frac{a(p/b)}{p}.
$$
In other words, $f(1)$ has $p$ possible values $0, 1/p, \ldots, (p-1)/p$.

Given a scalar product $\langle \cdot, \cdot \rangle$ on $G$, there is a homomorphism
\begin{equation}\label{eq:isom}
\begin{aligned}
G &\to G^*\\
g &\mapsto \langle g, \cdot\rangle.
\end{aligned}
\end{equation}
Conversely, any element $f\in G^*$ can be realized in this way, taking $g = (f_1(1),\ldots,\allowbreak f_r(1))$,
so that \eqref{eq:isom} is an isomorphism.

Now, given a subgroup $H$ of $G$, we define the orthogonal complement $H^\mathrm{O}$ of $H$ in $G^*$ as
\begin{equation*}
H^\mathrm{O} := \{f \in G^*: f(h) =0,\ h\in H\}.
\end{equation*} 
The isomorphism \eqref{eq:isom} takes $H^\perp$ (with respect to the scalar product) to $H^\mathrm{O}$. The result follows since
$$
(G/H)^* = \Hom(G/H,\IQ/\IZ) = \{f \in \Hom(G, \IQ/\IZ): H \subset \ker f\} = H^\mathrm{O}.
$$
\end{proof}

\begin{lemma}\label{lem:xi}
Define $\xi_\ell$ as in \eqref{eq:xi}. Then
\begin{equation*} 
\xi_\ell \,=\, 
\begin{cases}
|H|, & \ell\in H^\perp,\\
0, & \ell\notin H^\perp.
\end{cases}
\end{equation*}
\end{lemma}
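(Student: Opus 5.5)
The plan is the standard orthogonality argument for characters of a finite abelian group, applied to the restriction to $H$ of the character $\chi_\ell : G \to \IC^*$, $\chi_\ell(g) := \ep^{-\langle \ell, g\rangle}$. With this notation, $\xi_\ell = \sum_{h\in H}\chi_\ell(h)$.

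First I will check that $\chi_\ell$ is a well-defined homomorphism on $G = \IZ_{p_1}\times\cdots\times\IZ_{p_r}$. The $i$th term of $\langle \ell, g\rangle$ is $\frac{k}{p_i}\ell_i g_i$. Changing $g_i$ by a multiple of $p_i$ changes this term by a multiple of $k$, so $\ep^{\langle \ell,g\rangle}$ depends only on the class of $g$ (this is Remark \ref{rem:groups}). Since $\langle \ell,\cdot\rangle$ is additive, $\chi_\ell(g+g') = \chi_\ell(g)\chi_\ell(g')$. The restriction $\chi_\ell|_H$ is therefore a homomorphism $H\to\IC^*$.

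By the definition of $H^\perp$, $\ell\in H^\perp$ exactly when $\chi_\ell|_H \equiv 1$. In that case every term of the sum is $1$, and $\xi_\ell = |H|$.

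If $\ell\notin H^\perp$, I choose $h_0\in H$ with $\chi_\ell(h_0)\neq 1$. Translation $h\mapsto h+h_0$ is a bijection of $H$, so
\begin{equation*}
\xi_\ell = \sum_{h\in H}\chi_\ell(h+h_0) = \chi_\ell(h_0)\,\xi_\ell .
\end{equation*}
This gives $(1-\chi_\ell(h_0))\,\xi_\ell = 0$, and hence $\xi_\ell = 0$.

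There is no real obstacle here. The only point requiring care is the well-definedness of the exponent modulo $k$ when $k$ is a common multiple of the $p_i$, and that is handled by the computation in the first step.
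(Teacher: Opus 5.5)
Your proof is correct and is essentially the paper's argument: the case $\ell\in H^\perp$ is immediate, and for $\ell\notin H^\perp$ you pick $h_0\in H$ with nontrivial character value and reindex by $h\mapsto h+h_0$ to get $\chi_\ell(h_0)\,\xi_\ell=\xi_\ell$, hence $\xi_\ell=0$. Your explicit check that $\ep^{\langle \ell,g\rangle}$ is well defined on $G$, and your use of the sign $\ep^{-\langle\ell,h\rangle}$ as in \eqref{eq:xi}, add a little care but no new idea. The paper's computation uses $\ep^{+\langle\ell,h\rangle}$, which gives the same sum after $h\mapsto -h$.
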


\begin{proof}
The assertion is clear in the case $\ell \in H^\perp$. On the other hand,
if $\ell\notin H^\perp$, then there exists $h_{\ell} \in H$ such that $\ep^{\langle \ell, h_\ell \rangle} \neq 1$. But
\begin{equation*}
\ep^{\langle \ell, h_\ell \rangle} \xi_\ell = \sum_{h\in H} \ep^{\langle \ell, h_\ell \rangle} \ep^{\langle \ell, h\rangle} 
= \sum_{h\in H} \ep^{\langle \ell, h_\ell + h\rangle} = \xi_\ell,
\end{equation*}
so that $\xi_\ell = 0$.
\end{proof}

\begin{lemma}\label{lem:subspaces}
$Y_j = Y_{j+h}$, for all $j\in G$ and $h\in H$, if and only if $X_\ell = 0$, for all $\ell\notin H^\perp$.
\end{lemma}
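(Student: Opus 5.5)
The plan is to prove the two implications separately, using the invertible transformation \eqref{eq:circ.2}/\eqref{eq:circinv.2} together with Lemma \ref{lem:xi}. Since the transformation $W\to V$ is a linear isomorphism, the argument has two parts. First we show that it maps $W_H$ into the subspace $V_{H^\perp}:=\{X_\ell=0,\ \ell\notin H^\perp\}$. Then we compare dimensions, or alternatively verify the converse directly.

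\emph{Forward direction.} Suppose $Y\in W_H$. By \eqref{eq:circinv.4}, every $X_\ell$ equals $\xi_\ell/(p_1\cdots p_r)$ times a linear combination of the $Y_{j^i}$. By Lemma \ref{lem:xi}, $\xi_\ell=0$ whenever $\ell\notin H^\perp$. Hence $X_\ell=0$ for all $\ell\notin H^\perp$. To make this step airtight, I will check that \eqref{eq:circinv.4} follows from \eqref{eq:circinv.3}. This uses $\ep^{-\langle\ell,j^i+h\rangle}=\ep^{-\langle\ell,j^i\rangle}\ep^{-\langle\ell,h\rangle}$, which holds by bilinearity of the weighted scalar product modulo $k$. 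It also uses $Y_{j^i+h}=Y_{j^i}$ on $W_H$.

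\emph{Converse.} I would argue directly from \eqref{eq:circ.2}. Suppose $X_\ell=0$ for $\ell\notin H^\perp$. Then
\[
Y_{j+h}=\sum_{\ell\in H^\perp}\ep^{\langle j+h,\ell\rangle}X_\ell=\sum_{\ell\in H^\perp}\ep^{\langle j,\ell\rangle}\ep^{\langle h,\ell\rangle}X_\ell .
\]
For $\ell\in H^\perp$ and $h\in H$ we have $\ep^{\langle h,\ell\rangle}=1$; this uses the symmetry $\langle h,\ell\rangle=\langle\ell,h\rangle$, which is evident from the formula. Therefore the sum equals $Y_j$.

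As a consistency check, which is optional, the two subspaces have equal dimension. Indeed $\dim W_H=|G/H|$, while $\dim V_{H^\perp}=|H^\perp|=|G/H|$ by Lemma \ref{lem:isom}. Given the forward inclusion and injectivity of the transformation, this equality already gives the converse.

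The only point requiring care is that the weighted scalar product is well defined and bilinear on $G$ modulo $k$. Changing $j_i$ by a multiple of $p_i$ changes $(k/p_i)j_i\ell_i$ by a multiple of $k$, so this holds. I do not expect a genuine obstacle here.
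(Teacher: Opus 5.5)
Your proof is correct and matches the paper's argument essentially verbatim. The forward direction goes via \eqref{eq:circinv.4} and Lemma~\ref{lem:xi}, and the converse substitutes into \eqref{eq:circ.2} using $\ep^{\langle h,\ell\rangle}=1$ for $h\in H$, $\ell\in H^\perp$. The dimension-count alternative you mention is valid but not needed.
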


\begin{proof}
If $Y_j = Y_{j+h}$, for all $j\in G$ and $h\in H$, then $X_\ell = 0$, for all $\ell\notin H^\perp$, by \eqref{eq:circinv.4} and
Lemma \ref{lem:xi}.
Conversely, suppose $X_\ell = 0$, for all $\ell\notin H^\perp$. If $j\in G$ and $h\in H$, then
\begin{equation*}
Y_{j+h} = \sum_{\ell\in H^\perp} \ep^{\langle j+h, \ell\rangle} X_\ell 
=  \sum_{\ell\in H^\perp} \ep^{\langle j, \ell\rangle} \ep^{\langle h, \ell\rangle} X_\ell = Y_j ,
\end{equation*}
since $\ep^{\langle h, \ell\rangle} = 1$ if $h\in H$ and $\ell \in H^\perp$.
\end{proof}

\begin{corollary}\label{cor:subspaces}
The linear transformation \eqref{eq:circinv.1} (or \eqref{eq:circinv.2}) induces an isomorphism $W_H \to V_H$, where
\begin{equation*}
V_H := \{(X_\ell)_{\ell\in G} \in V: X_\ell = 0,\ \text{for all }\, \ell \notin H^\perp\};
\end{equation*}
in particular, $|H^\perp| = |G/H|$ (which also follows from Lemma \ref{lem:isom}).
\end{corollary}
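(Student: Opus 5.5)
The plan is to deduce the corollary directly from Lemma \ref{lem:subspaces}, using only that \eqref{eq:circinv.2} and \eqref{eq:circ.2} are mutually inverse linear maps $W \to V$ and $V \to W$.

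\textbf{Image of $W_H$.} If $Y \in W_H$, then $Y_j = Y_{j+h}$ for all $j\in G$, $h\in H$. By the forward direction of Lemma \ref{lem:subspaces}, its image $X$ satisfies $X_\ell = 0$ for all $\ell\notin H^\perp$, so $X\in V_H$. Hence the transformation maps $W_H$ into $V_H$, and the restriction is injective because the whole transformation is invertible.

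\textbf{Surjectivity onto $V_H$.} Let $X\in V_H$ and put $Y := $ its preimage under \eqref{eq:circ.2}. The converse direction of Lemma \ref{lem:subspaces} (whose computation uses only \eqref{eq:circ.2} and $\ep^{\langle h,\ell\rangle}=1$ for $h\in H$, $\ell\in H^\perp$) gives $Y_{j+h}=Y_j$, so $Y\in W_H$. Therefore $W_H\to V_H$ is a linear isomorphism.

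\textbf{Counting.} By construction $\dim V_H = |H^\perp|$, since the free coordinates are the $X_\ell$ with $\ell\in H^\perp$. Next I would check $\dim W_H = |G/H|$, as asserted before \eqref{eq:circinv.3}. A vector in $W_H$ is exactly a function on $G$ that is constant on $H$-cosets, so it is determined freely by its values at the coset representatives $j^0,\ldots,j^{t-1}$. The isomorphism then gives $|H^\perp| = |G/H|$, consistent with Lemma \ref{lem:isom}.

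The only point needing care is that Lemma \ref{lem:subspaces} is stated for $Y$ and $X$ related by the transformation, with both directions used. I do not expect a real obstacle here.
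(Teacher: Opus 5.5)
Your proposal is correct and matches the argument the paper intends: the paper gives no separate proof, and the corollary follows directly from the two directions of Lemma \ref{lem:subspaces} together with invertibility of \eqref{eq:circinv.2}. The count $|H^\perp| = |G/H|$ then comes, as in your plan, from comparing $\dim V_H = |H^\perp|$ with $\dim W_H = |G/H|$.
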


\medskip
We can interpret the preceding combinatorial remarks in terms of group-circulant
matrices. Given $j = (j_1,\ldots,j_r) \in G$, let $\oj$ denote the class of $j$ in $G/H$,
and let $Y_{\oj} := Y_j|_{W_H}$. Then the inverse of the isomorphism $W_H \to V_H$ of Corollary \ref{cor:subspaces}
is given by the linear transformation
\begin{equation*}
Y_{\oj} = \sum_{\ell\in H^\perp} \ep^{\langle j,\ell\rangle} X_\ell, \quad \oj \in G/H.
\end{equation*}

Let $\ell_0, \ell_1,\ldots, \ell_{t-1}$ denote any enumeration of the indices $\ell \in H^\perp$, with $\ell_0 = 0$. 
For the additive group $\Ga := G/H$, the circulant matrix \eqref{eq:Gcirc} takes the form
\medskip
\begin{equation}\label{eq:circmat}
C_{G/H} (X_{\ell_0}, X_{\ell_1},\ldots, X_{\ell_{t-1}}) \,= 
\begin{pmatrix}
X_{\ell_0} & X_{\ell_1} & \cdots & X_{\ell_{t-1}} \\[.2em]
X_{\ell_0 - \ell_1} & X_{\ell_1 - \ell_1} & \cdots & X_{\ell_{t-1} - \ell_1} \\[.2em]
\vdots & \vdots & \ddots & \vdots \\[.2em]
X_{\ell_0 - \ell_{t-1}} & X_{\ell_1 - \ell_{t-1}}  & \cdots & X_{\ell_{t-1} - \ell_{t-1}}
\end{pmatrix}.
\end{equation}

\begin{remark}\label{rem:circdet}
The $\Ga$-circulant matrix $C_{\Ga}(X_{\ell_0}, X_{\ell_1},\ldots, X_{\ell_{t-1}})$
depends on $\Ga$, as well as on the ordering $\ell_0, \ell_1,\ldots, \ell_{t-1}$.
This can be compared with the standard circulant matrix \cite[(2.1)]{BBR}, which is $C_\Ga$ in the case $\Ga = \IZ_k$, with
the ordering $\ell = 0,1,\ldots,k-1$.

On the other hand, let
\begin{equation*}\label{eq:circdet}
\De_{\Ga} := \det C_{\Ga}\,;
\end{equation*}
then both $\De_{\Ga}$ and the eigenvalues of $C_{\Ga}$ are independent of the 
ordering $\ell_0, \ell_1,\ldots, \allowbreak \ell_{t-1}$; see \S\ref{subsec:Gcirc}.

In particular, 
$\Ga \cong \IZ_t$ if and only if we can take $\ell_i = i$, $0 \leq i \leq t-1$. Thus $\De_{\Ga}$ coincides with
the standard circulant determinant $\De_t(X_0,\ldots X_{t-1})$ if and only if $\Ga$ is a cyclic group of order $t$.

The matrix in \eqref{eq:circmat} depends on the group structure. If we introduce a change of variables
$Y_i := X_{\ell_i}$, $i=0,\ldots,t-1$, then we can write $\De_\Ga(Y_0,\ldots, Y_{t-1})$, following standard 
practise, to mean the composite
of the determinant $\De_\Ga(X_{\ell_0}, X_{\ell_1},\ldots, X_{\ell_{t-1}})$ with the transformation $X_{\ell_i} = Y_i$.
Note, however, that this notation hides the group structure; in particular, $\De_\Ga(Y_0,\ldots, Y_{t-1})$ is not the determinant 
of a matrix whose successive rows are obtained
from the first row $(Y_0,\ldots, Y_{t-1})$ by a trivial (e.g., cyclic) permutation, unless $\Ga$ is a cyclic group.
See Example \ref{ex:gencirc}.
\end{remark}

It is easy to check that the matrix in \eqref{eq:circmat} has eigenvectors
\begin{equation*}
\Psi_{\oj} = (\ep^{\langle j, \ell_o\rangle},\, \ep^{\langle j, \ell_1\rangle}, \ldots,\, \ep^{\langle j, \ell_{t-1}\rangle}), \quad \oj \in G/H,
\end{equation*}
with corresponding eigenvalues
\begin{equation}\label{eq:eigengen}
Y_{\oj} = \sum_{i=0}^{t-1} \ep^{\langle j,\ell_i\rangle} X_{\ell_i}\,;
\end{equation}
in particular,
\begin{equation*}
\det C_{G/H} (X_{\ell_0}, X_{\ell_1},\ldots, X_{\ell_{t-1}}) = \prod_{\oj \in G/H} Y_{\oj}.
\end{equation*}

\subsection{Group circulant normal form}\label{subsec:groupcirc}
Now consider $f(w,u,x,z)$ irreducible, as at the beginning of Section \ref{sec:circ}. 
We can assume that $a_1(w,u,x) = 0$; i.e., $\sum_{j=1}^k b_j = 0$. We follow the notation at the beginning of the section,
so that
\begin{equation*}
f(w,u,x,z) = \prod_{\oj\in G/H} Y_{\oj},
\end{equation*}
where
\begin{equation*}
Y_{\oj} = Y_j := z + b(\ep_{p_1}^{j_1}w_1^{1/p_1}, \ldots, \ep_{p_r}^{j_r}w_r^{1/p_r}, u, x),
\end{equation*}
for any representative $j = (j_1,\ldots, j_r)$ of $\oj$ in $G = \IZ_{p_1}\times \cdots \times \IZ_{p_r}$. 

Recall that, for each $i=1,\ldots,r$, $\IZ^{(i)}_{p_i} := \{0\}^{i-1} \times\IZ_{p_i}\times\{0\}^{r-i}$ maps
onto a cyclic subgroup  $\IZ_{p_i}$ of the group $S_k$ of permutations of the factors $Y_{\oj}$, and the translates
of $\IZ_{p_i} \subset S_k$ by the elements of $G$ provide a partition of $\{Y_{\oj}\}$ into $k/p_i$ subsets of $p_i$ elements.

In particular, given $i=1,\ldots,r$. at any point of the \emph{codimension one stratum}
\begin{equation}\label{eq:codim1stratum}
T_i := \{z=x=w_i = 0,\, w_h\neq 0,\, h\neq i\},
\end{equation}
(i.e., codimension one in $S := \{z=x=0\}$), $f(w,u,x,z)$ formally has $k/p_i$ irreducible factors of degree $p_i$ in $z$,
corresponding to the preceding partition.

As before,  $H$ denotes the stabilizer of any root of \eqref{eq:splitp1} 
by the action of $G = \IZ_{p_1}\times \cdots \times \IZ_{p_r}$. Thus $|G/H| = k$.

We recall that $f$ is said to have a $G/H$-\emph{circulant singularity} at a point $a$ if, in suitable coordinates
$(w,u,x,z)$ in an \'etale or analytic neighbourhood of $a=0$,
\begin{equation}\label{eq:normform}
f(w,u,x,z) = \De_{G/H}(X_{\ell_0}, X_{\ell_1},\ldots, X_{\ell_{k-1}}),
\end{equation}
where the conditions (1) and (2)
of Definition \ref{def:groupcircintro} hold with $G/H = \{0 = \ell_0, \ell_1, \ldots,\allowbreak \ell_{k-1}\}$, and
\begin{align*}\label{eq:normform1}
X_{\ell_0} &= z,\nonumber\\
X_{\ell_j} &= w^{\ga_j} x_j,\quad j=1,\ldots, k-1;
\end{align*}
in this case, moreover, \eqref{eq:normform} is called $G/H$-\emph{circulant normal form}.
We also consider \emph{product group circulant normal form} in the case that $f$ is
not irreducible, as in Definition \ref{def:groupcircintro}.

The exponents 
$\ga_j$ in Definition \ref{def:groupcircintro} are not necessarily optimal, but
they are optimal on every codimension one stratum $T_i$.
The following theorem shows, moreover,
at any point $a\in T_i$, \eqref{eq:normform} can be rewritten in the product circulant normal form
of Example \ref{ex:standardcirc} at $a$. 

\begin{theorem}\label{thm:codim1}
\emph{Codimension one strata.} If $f(w,u,x,z)$ is irreducible and has group circulant normal form 
as above, then, at any point $b$ of a codimension one stratum $T_i$, we can find coordinates in an
\'etale or local analytic neighbourhood of $b$
in which \eqref{eq:normform} can be rewritten in the standard (product) circulant normal form of Example \ref{ex:standardcirc};
in particular, for each $j=1,\ldots, k-1$, $\{\ga_{ji}: i=1,\ldots,r\}$
comprises $k/p_i$ copies of $h/p_i$, for each $h = 0,\ldots, p_i-1$ (where $\ga_{0i} := 0$, $i=1,\ldots,r$).
\end{theorem}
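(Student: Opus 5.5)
At a point $b\in T_i$ all $w_h$ with $h\neq i$ are units, so I would first absorb them. In an étale neighbourhood of $b$ I choose $p$-th roots $w_h^{1/p_h}$ of the units $w_h$, $h\neq i$. These are analytic and invertible near $b$. Each $X_{\ell_j}=w^{\ga_j}x_j$ then becomes $w_i^{\ga_{ji}}\tilde x_j$, where $\tilde x_j := \prod_{h\neq i} w_h^{\ga_{jh}}\, x_j$ differs from $x_j$ by a unit, so $(w,u,\tilde x,z)$ is again a coordinate system at $b$. After this substitution $f=\De_{G/H}$ is a polynomial in $z,\tilde x$ and the single fractional power $w_i^{1/p_i}$, with coefficients analytic in the remaining variables.

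Next I would reorganize the eigenvalue factorization using the group structure. By \eqref{eq:eigengen}, $f=\prod_{\oj\in G/H}Y_{\oj}$ with $Y_{\oj}=\sum_{\ell\in H^\perp}\ep^{\langle j,\ell\rangle}X_\ell$. The factors are now permuted only by the subgroup $\IZ^{(i)}_{p_i}$, since the other roots of unity acting on $w_h^{1/p_h}$ have been fixed by our choice of roots. The cosets of the image of $\IZ^{(i)}_{p_i}$ in $G/H$ partition $\{Y_{\oj}\}$ into $k/p_i$ orbits of size $p_i$; this image has order exactly $p_i$ by minimality of $p_i$. I would group $f$ as a product $f=\prod_{c=1}^{k/p_i}F_c$, one factor $F_c$ for each orbit.

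For each orbit the plan is to show that $F_c$ is itself a standard circulant determinant $\De_{p_i}$ in new variables. Write $H^\perp$ via its pairing with $\IZ^{(i)}_{p_i}$: the character $\ell\mapsto \ep^{\langle e_i,\ell\rangle}$ maps $H^\perp$ onto $\mu_{p_i}$. Fixing a representative $j$ of the orbit, I group $Y_{\oj+s e_i}=\sum_{q=0}^{p_i-1}\ep_{p_i}^{sq}\,Z^{c}_q$, where $Z^c_q:=\sum_{\ell:\,\ell_i\equiv q}\ep^{\langle j,\ell\rangle}X_\ell$. Each such $X_\ell$ has $w_i$-exponent $\ga_{\ell i}=q/p_i$ by the construction of $\ga$ from the characters; this identification is something to check carefully. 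So $Z^c_q=w_i^{q/p_i}x^c_q$, with $x^c_q$ a linear combination of $z$ (when $q=0$) and the $\tilde x_\ell$. Then $F_c=\prod_s\bigl(\sum_q\ep_{p_i}^{sq}w_i^{q/p_i}x^c_q\bigr)=P_{p_i}(w_i,x^c_0,\dots,x^c_{p_i-1})$, which is $\cp(p_i)$ as in Example \ref{ex:standardcirc}.

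Finally I must verify that the $k$ linear forms $x^c_q$, over $c=1,\dots,k/p_i$ and $q=0,\dots,p_i-1$, are linearly independent combinations of $z,\tilde x_1,\dots,\tilde x_{k-1}$. Then they can replace $(z,\tilde x)$ as coordinates, which gives the product normal form $\cp(p_i)\times\cdots\times\cp(p_i)$. This independence is the main obstacle. For it I would use Corollary \ref{cor:subspaces}: the map $(X_\ell)\mapsto(Y_{\oj})$ is an isomorphism $V_H\to W_H$, and the block decomposition above factors it through an invertible discrete Fourier transform on each $\IZ_{p_i}$-coset. The counting statement then follows from the same decomposition. For each residue $q$, exactly $k/p_i$ indices $\ell\in H^\perp$ have $\ga_{\ell i}=q/p_i$, namely one per orbit, because the fibres of the surjective character $H^\perp\to\mu_{p_i}$ all have size $|H^\perp|/p_i=k/p_i$.
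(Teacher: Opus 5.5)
Your proposal is correct and follows essentially the paper's route: absorb the units $w_h$ ($h\neq i$), group the eigenvalues into orbits of $\IZ^{(i)}_{p_i}$, and factor the transform through an inner $\IZ_{p_i}$ discrete Fourier transform, so that your $Z^c_q$ and $x^c_q$ are, up to constants, the paper's $X^{(i)}_{\mu\obe}$ and $y_{\mu\obe}=\sum\ep^{\langle\be,\nu\rangle'}x_{\mu\nu}$ from \eqref{eq:circeqn2}--\eqref{eq:circeqn4}. The identification you flag, $\ga_{\ell i}=\ell_i/p_i$, is the step the paper argues rather than assumes (it notes that $v_i^{p_i-\mu}X^{(i)}_{\mu\obe}$ is $\IZ_{p_i}$-invariant, so every term of $X^{(i)}_{\mu\obe}$ has $w_i$-exponent in $\mu/p_i+\IN$, and comparing with $\sum_\nu\ep^{\langle\be,\nu\rangle'}w_i^{h_{\mu\nu}/p_i}x_{\mu\nu}$ in the independent variables $x_{\mu\nu}$ forces $h_{\mu\nu}=\mu$; equivalently it can be read off from \eqref{eq:coeff}), while your block-diagonal argument for the independence of the new coordinates makes explicit a point the paper only asserts after \eqref{eq:circeqn3}.
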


In order to prove Theorem \ref{thm:codim1}, we will need to further develop the combinatorial techniques of \S\ref{subsec:circcomb}; this will be done in \S\ref{subsec:codim1combin} following. The proof of
Theorem \ref{thm:codim1} will be completed in \S\ref{subsec:codim1}. 

\begin{example}\label{ex:nonirred}
Consider the complex variety $X$ defined by
\begin{equation}\label{eq:nonirred3}
f(w,x,z) := \De_{4}\left(z,\, w_1^{2/4}w_2^{1/4} x_1,\, w_2^{2/4} x_2,\, w_1^{2/4}w_2^{3/4} x_{3}\right).
\end{equation}
Then $X$ is irreducible, $G = \IZ_2\times\IZ_4$ and $G/H \cong \IZ_4$. The normalization of $X$ is not smooth---it
is the product of $\IC^3$ with a quadratic cone---in contrast to $\cp(k)$.

At any point of the codimension one stratum $T_1 = \{z=x=w_1=0,\,w_2\neq 0\}$, \eqref{eq:nonirred3} can be written
\begin{equation*}\label{eq:nonirred6}
\De_{4}\left(z,\, w_1^{2/4} y_1,\, y_2,\, w_1^{2/4} y_{3}\right),
\end{equation*}
after a change of variables, and the latter factors as 
\begin{equation*}
\left((z+y_2)^2 - w_1 (y_1 +y_3)^2\right)\left((z-y_2)^2 - w_1 (y_1 -y_3)^2\right).
\end{equation*}
This is $\cp(2) \times \cp(2)$; i.e.,
$$
(\eta_1^2 -w_1\xi_1^2)(\eta_2^2 -w_1\xi_2^2),
$$
after a change of variables.
\end{example}

In the remainder of this subsection,
we prove three elementary lemmas which are not formally used in the proof of Theorem \ref{thm:codim1}, 
but which provide important motivation; in particular, Theorem \ref{thm:codim1} is a very
general version of Lemma \ref{lem:converse} below, and the proof includes a general parallel
of the identity in Lemma \ref{lem:prodk}
(Lemma \ref{lem:circnormform}
is also used implicitly in \cite{BBR}; see \cite[Remark 4.3]{BBR}.)

\begin{lemma}\label{lem:circnormform}
Suppose that $r=1$, $f(w,x,z)$ is irreducible, and 
\begin{align*}
f(w,x,z) &= \De_k\left(z, w^{h_1/k}x_1, w^{h_2/k}x_2\ldots, w^{h_{k-1}/k} x_{k-1}\right)\\
               &= \prod_{\ell = 0}^{k-1}\, (z + \ep^\ell w^{h_1/k} x_1 + \cdots + \ep^{(k-1)\ell}w^{h_{k-1}/k}x_{k-1}),
\end{align*}
where $\ep = e^{2\pi i/k}$ and $(h_1,\ldots,h_{k-1})$ is a permutation of $(1,\ldots,k-1)$ (see \cite[Remark 4.3]{BBR}). Then, after the
permutation of variables $x_j = y_{h_j}$, $j=1,\ldots,k-1$, $f$ can be written in circulant normal form
\begin{multline*}
\De_k\left(z, w^{1/k}y_1, w^{2/k}y_2\ldots, w^{(k-1)/k} y_{k-1}\right) \\= \prod_{\ell = 0}^{k-1}\, (z + \ep^\ell w^{1/k} y_1 + 
                                     \cdots + \ep^{(k-1)\ell}w^{(k-1)/k}y_{k-1}).
\end{multline*}
\end{lemma}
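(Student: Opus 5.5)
The plan is to compare the two products factor by factor, after substituting $x_j = y_{h_j}$, using the monodromy $w^{1/k}\mapsto \ep\, w^{1/k}$ together with irreducibility. Write
\[
L_\ell := z + \sum_{j=1}^{k-1}\ep^{j\ell} w^{h_j/k}x_j, \qquad \ell\in\IZ_k,
\]
so that $f=\prod_\ell L_\ell$. These $k$ linear forms in $(z,x)$, with coefficients in $\IC[w^{1/k}]$, are pairwise distinct (for $k\geq 2$, already the $x_1$-coefficients differ). Since $f$ is a polynomial in $w$, the generator $\tau$ of $\IZ_k$ acting by $w^{1/k}\mapsto \ep\, w^{1/k}$ permutes the set $\{L_\ell\}$.

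First key step: because $f$ is irreducible, the orbit of $L_0$ under $\tau$ is the whole set of factors. Indeed, the product over a single orbit is $\tau$-invariant, so it is a polynomial factor of $f$ in $\IC[w,x,z]$; irreducibility forces it to be all of $f$. In particular $\tau(L_0)=L_c$ for some $c\in\IZ_k$, and the orbit generated by $c$ has size $k$, so $c$ is a unit in $\IZ_k$.

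Second key step: extract the exponents. We have
\[
\tau(L_0) = z+\sum_j \ep^{h_j} w^{h_j/k}x_j .
\]
Comparing the coefficients of $x_j$ with those of $L_c$ gives $\ep^{h_j}=\ep^{jc}$, i.e.
\[
h_j\equiv cj \pmod k \quad \text{for all } j=1,\ldots,k-1 .
\]
Thus $h$ is the permutation $j\mapsto cj \bmod k$. This is consistent with $h$ being a permutation, since $c$ is a unit. This step is where irreducibility is really used, and I expect it to be the main point to get right: one must check that $\tau$ sends a factor to a factor (using the distinctness of the factors and unique factorization over $\IC[w^{1/k},x,z]$), rather than merely permuting the set of roots in some less controlled way.

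Final step: reindex. Put $m=h_j=cj$, so that $j=c^{-1}m$ in $\IZ_k$, and $x_j=y_m$. Then
\[
L_\ell = z+\sum_{m=1}^{k-1}\ep^{(c^{-1}\ell)m}\, w^{m/k}\, y_m .
\]
As $\ell$ runs over $\IZ_k$, so does $\ell':=c^{-1}\ell$. Hence
\[
\prod_\ell L_\ell=\prod_{\ell'}\Bigl(z+\sum_m \ep^{m\ell'} w^{m/k}y_m\Bigr)=\De_k\bigl(z,w^{1/k}y_1,\ldots,w^{(k-1)/k}y_{k-1}\bigr),
\]
which is the asserted circulant normal form. (The exponents $h_j/k$ are already reduced representatives in $\{1,\ldots,k-1\}/k$, so no extra powers of $w$ need to be absorbed.)
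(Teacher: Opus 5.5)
Your proof is correct, but it takes a more computational route than the paper.

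\textbf{The paper's route.} The paper argues directly after the relabeling $x_j=y_{h_j}$. The $\ell=0$ factor is then already the standard root $z+\sum_m w^{m/k}y_m$. The monodromy $w^{1/k}\mapsto\ep\, w^{1/k}$ sends the standard $\ell$-th factor $z+\sum_m \ep^{m\ell}w^{m/k}y_m$ to the standard $(\ell+1)$-st one. Since the action is transitive on the factors, the set of factors is exactly the orbit of the $\ell=0$ factor, i.e.\ the standard circulant factors (in a different order). This never requires identifying $\tau(L_0)$ or solving for the permutation.

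\textbf{Your route.} You determine the permutation explicitly: $\tau(L_0)=L_c$ forces $h_j\equiv cj \pmod k$ with $c$ a unit, and the reindexing $\ell'=c^{-1}\ell$ makes the reordering of the factors explicit. This is essentially the computation the paper carries out in the proof of Lemma \ref{lem:converse} (its $\mu$ is your $c$). So your argument yields slightly more, namely that only the permutations $j\mapsto cj$ can occur, at the cost of some length.

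\textbf{Two small points of presentation.} First, your claim in the first step that the orbit of $L_0$ has the form $\{L_{mc}\}$ uses $\tau(L_\ell)=L_{\ell+c}$. That identity only follows from the congruence $h_j\equiv cj$, which you derive in the second step, so the steps should be ordered accordingly. Moreover, $c$ being a unit follows directly from $h$ being a permutation of $\{1,\ldots,k-1\}$: if $d=\gcd(c,k)>1$, then $h_{k/d}\equiv 0$.

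Second, irreducibility is not really where the work happens, in your argument or in the paper's. The fact that $\tau(L_0)\in\{L_\ell\}$ already follows from $\tau$-invariance of $f$ together with unique factorization, since the $L_\ell$ are distinct and monic of degree one in $z$.
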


\begin{proof} After the permutation of variables $x_j = y_{h_j}$, $j=1,\ldots,k-1$, 
we can rewrite $f$ as
\begin{equation}\label{eq:permcirc}
\prod_{\ell = 0}^{k-1}\, (z + \ep^{j_1 \ell} w^{1/k} y_1 + \cdots + \ep^{j_{k-1}\ell}w^{(k-1)/k}y_{k-1}),
\end{equation}
where $(j_1,\ldots,j_{k-1})$ is a permutation of $(1,\ldots,k-1)$.

The factor of \eqref{eq:permcirc} with $\ell=0$ is $z + w^{1/k} y_1 + \cdots + w^{(k-1)/k}y_{k-1}$.
Note that the action of $\ep \in \IZ_k$ on the roots of $f$ takes 
$$
\ep^\ell w^{1/k} y_1 + \cdots + \ep^{(k-1)\ell}w^{(k-1)/k}y_{k-1},
$$
for any $\ell$, into the same expression with $\ell +1$. Since the action is transitive on the roots,
$$
f(w,y,z) = \prod_{\ell = 0}^{k-1}\, (z + \ep^\ell w^{1/k} y_1 + \cdots + \ep^{(k-1)\ell}w^{(k-1)/k}y_{k-1}),
$$
as required. (Note that the factors here are not in the same order as in \eqref{eq:permcirc}.)
\end{proof}

\begin{lemma}\label{lem:converse}
Suppose that
\begin{align}
f(w,x) &= \De_k\left(w^{h_0/k}x_0, w^{h_1/k}x_1, w^{h_2/k}x_2\ldots, w^{h_{k-1}/k} x_{k-1}\right)\notag\\
               &= \prod_{\ell = 0}^{k-1}\, (w^{h_0/k}x_0 + \ep^\ell w^{h_1/k} x_1 + \cdots + \ep^{(k-1)\ell}w^{h_{k-1}/k}x_{k-1})\label{eq:fact}
\end{align}
is a polynomial in $(w,x) = (w,x_0,x_1,\ldots,x_{k-1})$, where each $h_j \in \{0,\ldots,k-1\}$.
Then $f$ is irreducible if and only if $(h_0,\ldots,h_{k-1})$ is a
permutation of $\{0,\ldots,k-1\}$.
\end{lemma}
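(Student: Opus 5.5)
Write $Y_\ell := \sum_{j=0}^{k-1}\ep^{j\ell}w^{h_j/k}x_j$, so that $f=\prod_\ell Y_\ell$. The natural tool is the Galois action of $\mu_k$ on $w^{1/k}$: the generator sends $w^{1/k}\mapsto \ep w^{1/k}$, so $w^{h_j/k}\mapsto \ep^{h_j}w^{h_j/k}$. Irreducibility of $f$ over $\IC[w,x]$ (equivalently over $\IC(w)[x]$, by Gauss's lemma, since $f$ is a polynomial and the $Y_\ell$ are primitive linear forms in $x$) is equivalent to this action being transitive on the set of linear factors $\{Y_\ell\}$. Here we use that the $Y_\ell$ are pairwise non-associate irreducibles in $\IC(w^{1/k})[x]$, and that the irreducible factors of $f$ over $\IC(w)$ are the products over $\mu_k$-orbits of factors.

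\emph{Step 1 (sufficiency).} Suppose $(h_0,\ldots,h_{k-1})$ is a permutation $\pi$ of $\{0,\ldots,k-1\}$. Then $\ep\cdot Y_\ell=\sum_j \ep^{j\ell+h_j}w^{h_j/k}x_j$. This is not literally some $Y_{\ell'}$ unless $h_j\equiv cj$. So instead of matching factors, I would argue via the coefficient linear map. Substitute $X_j:=w^{h_j/k}x_j$ and regard $f$ as the norm-type product $\De_k(X_0,\ldots,X_{k-1})$. The Galois orbit of $Y_0=\sum_j w^{h_j/k}x_j$ consists of $\sum_j\ep^{ah_j}w^{h_j/k}x_j$ for $a=0,\ldots,k-1$. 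These $k$ forms are pairwise distinct, since $\ep^{ah_j}$ as $j$ varies determines $a$: pick the $j$ with $h_j=1$. Hence the minimal polynomial of $Y_0$ over $\IC(w)$ has degree $k$ and divides $f$ (because $Y_0\mid f$). Comparing degrees in $x$, $f$ equals a constant times this minimal polynomial, which is irreducible. I would double-check that $Y_0$ is one of the factors, which holds for $\ell=0$.

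\emph{Step 2 (necessity).} Suppose the $h_j$ are not a permutation. Then the set $\{h_j\}$ misses some residue, so the subgroup generated by $\{h_j\}$ together with $k$ may still be all of $\IZ$. Instead, count the orbit of $Y_0$: the stabilizer of $Y_0$ is $\{a:\ah_j\equiv0 \bmod k\ \forall j\}$ (for $x_j$ independent). If $d=\gcd(h_0,\ldots,h_{k-1},k)>1$, the orbit has size $<k$, so the minimal polynomial of $Y_0$ is a proper factor and $f$ is reducible. If $d=1$ but the map $j\mapsto h_j$ is not injective, then the orbit of $Y_0$ has size $k$ and forms a degree-$k$ irreducible factor $g$ with $g$ dividing $f$, so $f=cg$. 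The main obstacle is to rule this out. I would compare the two products as polynomials: $g$ involves $x_j,x_{j'}$ with $h_j=h_{j'}$ only through $x_j+x_{j'}$, since each Galois conjugate of $Y_0$ does. But $f$ does not have this property, because the factor $Y_\ell$ contains $\ep^{j\ell}x_j+\ep^{j'\ell}x_{j'}$ with distinct coefficients for $\ell=1$. The linear forms dividing $f$ are exactly the $Y_\ell$ by unique factorization over $\IC(w^{1/k})$, whereas those dividing $g$ are annihilated by $\partial_{x_j}-\partial_{x_{j'}}$. This contradiction completes the proof.
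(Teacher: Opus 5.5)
\textbf{There is a genuine gap in Step 2: it uses the wrong stabilizer.} The overall strategy (count the Galois orbit of the linear factors, then exclude the bad case by comparing the coefficients of $x_j,x_{j'}$) is workable. However, the generator $\sigma\colon w^{1/k}\mapsto\ep w^{1/k}$ permutes the $Y_\ell$ only up to nonzero scalars, since $\sigma^aY_0=\ep^{ah_0}\sum_j\ep^{a(h_j-h_0)}w^{h_j/k}x_j$. Factorization over $\IC(w)$ is governed by the orbit of the class $[Y_0]$ of $Y_0$ up to scalars. Its stabilizer is $\{a: a(h_j-h_0)\equiv 0 \bmod k\ \text{for all } j\}$, not $\{a: ah_j\equiv 0 \bmod k\ \text{for all } j\}$. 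When $h_0\neq 0$ these can differ, and your subcase ``$d=1$, $j\mapsto h_j$ not injective'' then fails.

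For a concrete failure, take $k=2$, $(h_0,h_1)=(1,1)$. Then $f=w(x_0^2-x_1^2)$ is a polynomial and $d=\gcd(1,1,2)=1$. The orbit of $Y_0=w^{1/2}(x_0+x_1)$ is $\{\pm w^{1/2}(x_0+x_1)\}$, of size $2$. But its product $-w(x_0+x_1)^2$ is neither irreducible nor a divisor of $f$. So $f=cg$ is false here, and your closing contradiction would ``prove'' that this existing configuration cannot occur.

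The $d>1$ case has the same defect, though there the conclusion survives. For $k=4$, $h=(2,2,2,2)$, the exact-orbit product $-w(x_0+x_1+x_2+x_3)^2$ does not divide $f=w^2\De_4(x_0,x_1,x_2,x_3)$. Step 1 is safe only because some $h_{j_0}=0$: every conjugate then has $x_{j_0}$-coefficient $1$, so ``distinct'' implies ``non-associate''.

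\textbf{The repair is short.} Put $d':=\gcd(h_1-h_0,\ldots,h_{k-1}-h_0,k)$.
\begin{itemize}
\item If $d'>1$, the orbit of $[Y_0]$ has $k/d'<k$ elements. The product over it is, up to a scalar, an irreducible factor of $f$ over $\IC(w)$, so $f$ is reducible. Gauss's lemma passes this to $\IC[w,x]$ with no primitivity hypothesis. Incidentally, your claim that the $Y_\ell$ are primitive is false for $h=(1,1)$. Primitivity of $f$ is needed only in Step 1, where the term $\pm x_{j_0}^k$ supplies it.
\item If $d'=1$, $\sigma$ is transitive on the $k$ classes $[Y_\ell]$, so $[Y_1]=[\sigma^aY_0]$ for some $a$. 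Ratios of coefficients are unchanged by scalars, so your $\partial_{x_j}-\partial_{x_{j'}}$ observation forces $j\mapsto h_j$ to be injective.
\end{itemize}

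\textbf{Comparison with the paper.} The paper uses invariance of $f$ under $\sigma$ to get that $\sigma$ sends each $Y_\ell$ to a multiple of some $Y_{\ell'}$. This forces $h_j\equiv h_0+\mu j \pmod k$; the paper writes $h_j\equiv\mu j$, tacitly dropping the same scalar $\ep^{h_0}$. So the situation you set aside in Step 1 as special is in fact forced by the polynomial hypothesis, and irreducibility reduces to $\gcd(\mu,k)=1$. Your corrected route never extracts this arithmetic-progression structure. In exchange, it is self-contained and proves irreducibility in Step 1 directly, rather than via the irreducibility of $\cp(k)$ and Lemma~\ref{lem:circnormform}.
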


\begin{proof}
The generator $\ep$ of $\IZ_k$ acts on the factors of \eqref{eq:fact} by
\begin{equation*}
\ep^{j\ell} w^{h_j/k} x_j \mapsto \ep^{j\ell + h_j} w^{h_j/k} x_j, \quad j=0,\ldots, k-1.
\end{equation*}
Since $f(w,x)$ is invariant, the action permutes the factors; i.e.,
\begin{equation*}
j\ell + h_j = j \la_\ell \mod k, \quad j=0,\ldots,k-1,
\end{equation*}
where $(\la_0,\ldots,\la_{k-1})$ is a permutation of $(0,\ldots,k-1)$. Thus, $h_j =\mu j \mod k$, $j=0,\ldots,k-1$,
where we can take $\mu = \la_\ell - \ell$, for any given $\ell$. 

Then $f(w,x)$ is  irreducible if and only if $\mu$ and $k$ are relatively prime;
i.e., $(0, \mu,\ldots, (k-1)\mu)$ (each term$\mod k$) is a permutation of $(0,\ldots,k-1)$. The result follows from
(a slightly more general statement of) Lemma \ref{lem:circnormform}.
\end{proof}

\begin{lemma}\label{lem:prodk}
Given positive integers $k\geq 2$ and $r$,
\begin{equation}\label{eq:prodk}
\begin{aligned}
\prod_{i=0}^{r-1} \De_k (x_{i0}, &w^{1/k}x_{i1},\ldots, w^{(k-1)/k}x_{i,k-1})\,\\
&=\, \De_{rk} (x_0, w^{1/k}x_1,\ldots, w^{(k-1)/k}x_{k-1}, x_k, w^{1/k}x_{k+1},\ldots, w^{(k-1)/k}x_{2k-1},\\
&\hspace{5.2cm} \ldots, x_{(r-1)k},\ldots,w^{(k-1)/k}x_{2k-1}),
\end{aligned}
\end{equation}
where $(x_{ij})_{i=0,\ldots,r-1,\,j=0.\ldots,k-1}$ and $(x_{i})_{i=1,\ldots,rk}$ are related by the following identities:
\begin{equation}\label{eq:prodkid}
x_{ij}  = \ep_{rk}^{ij} ( x_j + \ep_r^i x_{k+j} + \cdots + \ep_r^{(r-1)i} x_{(r-1)k+j}),\quad
                   i=0,\ldots,r-1,
\end{equation}
for each $j=0.\ldots,k-1$ (note that \eqref{eq:prodkid} defines an invertible linear transformation, for each fixed $j$).
\end{lemma}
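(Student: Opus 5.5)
The plan is to prove \eqref{eq:prodk} by factoring both sides into their linear eigenvalue factors, as in \eqref{eq:detcirc}, and showing that the two families of factors coincide after re-indexing. I read the right-hand side as $\De_{rk}(X_0,\ldots,X_{rk-1})$ with $X_{ak+j} := w^{j/k}x_{ak+j}$ for $a=0,\ldots,r-1$ and $j=0,\ldots,k-1$; the last entry, printed as $w^{(k-1)/k}x_{2k-1}$, should read $w^{(k-1)/k}x_{rk-1}$.

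\emph{Step 1: factor the right-hand side.} By \eqref{eq:detcirc} applied to $\De_{rk}$, the right-hand side is $\prod_{m=0}^{rk-1} Y_m$, where
\begin{equation*}
Y_m = \sum_{a=0}^{r-1}\sum_{j=0}^{k-1} \ep_{rk}^{m(ak+j)}\, w^{j/k} x_{ak+j}
= \sum_{j=0}^{k-1} \ep_{rk}^{mj}\, w^{j/k} \sum_{a=0}^{r-1} \ep_r^{ma}\, x_{ak+j},
\end{equation*}
using $\ep_{rk}^{mak} = \ep_r^{ma}$.

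\emph{Step 2: re-index the factors.} Write $m = i + rs$ with $0\le i\le r-1$ and $0\le s\le k-1$. This is a bijection between $\{0,\ldots,rk-1\}$ and the pairs $(i,s)$. Then $\ep_r^{ma} = \ep_r^{ia}$ and $\ep_{rk}^{mj} = \ep_{rk}^{ij}\,\ep_k^{sj}$. Substituting, and using \eqref{eq:prodkid}, gives
\begin{equation*}
Y_{i+rs} = \sum_{j=0}^{k-1} \ep_k^{sj} w^{j/k}\, \ep_{rk}^{ij}\sum_{a=0}^{r-1}\ep_r^{ia}x_{ak+j}
= \sum_{j=0}^{k-1} \ep_k^{sj} w^{j/k} x_{ij}.
\end{equation*}
This is precisely the $s$-th eigenvalue factor of $\De_k(x_{i0}, w^{1/k}x_{i1},\ldots,w^{(k-1)/k}x_{i,k-1})$, again by \eqref{eq:detcirc}.

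\emph{Step 3: conclude.} Taking the product over all $(i,s)$ yields \eqref{eq:prodk}. For the parenthetical claim, fix $j$. The map $(x_{ak+j})_a \mapsto (x_{ij})_i$ is the discrete Fourier (Vandermonde) matrix $(\ep_r^{ia})$, which is invertible, composed with the diagonal matrix of nonzero scalars $\ep_{rk}^{ij}$. Hence it is invertible.

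The main point needing care is the splitting $m=i+rs$ in Step 2. It must be chosen so that the $r$-th roots of unity see only $i = m \bmod r$, while the twist $\ep_{rk}^{ij}$ is absorbed into the definition of $x_{ij}$. Once that decomposition is fixed, the remaining steps are direct bookkeeping.
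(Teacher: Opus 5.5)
Your proof is correct and is essentially the paper's argument. Like the paper, you expand the right-hand side into its $rk$ linear eigenvalue factors, group the terms by the power $w^{j/k}$, and re-index $m = i + rs$ (the paper writes $m=\ell r+i$) so that $\ep_{rk}^{jm}=\ep_k^{js}\ep_{rk}^{ij}$; each factor then matches a factor of $\De_k(x_{i0},\ldots,w^{(k-1)/k}x_{i,k-1})$ via \eqref{eq:prodkid}. Your correction of the last entry to $w^{(k-1)/k}x_{rk-1}$ and your diagonal-times-Vandermonde argument for invertibility are also right.
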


\begin{proof}
In the left-hand side of \eqref{eq:prodk}, each factor
\begin{equation}\label{eq:prodk1}
\begin{aligned}
\De_k (x_{i0}, w^{1/k}x_{i1},&\ldots, w^{(k-1)/k}x_{i,k-1})\\
&= \prod_{\ell=0}^{k-1} (x_{i0} + \ep_{k}^\ell w^{1/k}x_{i1} + \cdots + 
\ep_{k}^{(k-1)\ell}w^{(k-1)/k}x_{i,k-1}.
\end{aligned}
\end{equation}
The right-hand side of \eqref{eq:prodk} is given by
\begin{equation*}
\begin{aligned}
&\prod_{m=0}^{rk-1}(x_0 + \ep_{rk}^m w^{1/k}x_1 + \cdots + \ep_{rk}^{(k-1)m}w^{(k-1)/k}x_{k-1} +\ep_{rk}^{km}x_k +  \ep_{rk}^{(k+1)m}w^{1/k}x_{k+1}\\ 
&\hspace{1.5cm} + \cdots + \ep_{rk}^{(2k-1)m}w^{(k-1)/k}x_{2k-1} + \cdots + \ep_{rk}^{(rk-1)m}w^{(k-1)/k}x_{rk-1})\\
= &\prod_{m=0}^{rk-1} \left((x_0 + \ep_r^m x_k + \cdots + \ep_r^{(r-1)m}x_{(r-1)k})\right.\\
&\hspace{1.5cm} + w^{1/k} \ep_{rk}^m (x_1 + \ep_r^m x_{k+1} + \cdots + \ep_r^{(r-1)m}x_{(r-1)k+1})\\
&\hspace{1.5cm} \left. + \cdots + w^{(k-1)/k} \ep_{rk}^{(k-1)m} (x_{k-1} + \ep_r^m x_{2k-1} + \cdots + \ep_r^{(r-1)m}x_{rk-1}\right).
\end{aligned}
\end{equation*}
The indices $m$ can be written $m=\ell r + i$, where $\ell =0.\ldots,k-1,\, i = 0,\ldots,r-1$, so that 
$\ep_{rk}^{jm} = \ep_{rk}^{j\ell r +ij} = \ep_k^{j\ell} \ep_{rk}^{ij}$, $j=0,\ldots,k-1$, 
and the result follows.
\end{proof}

\subsection{Codimension one combinatorics}\label{subsec:codim1combin}
We use the notation of \S\ref{subsec:circcomb} and of the introduction to Section \ref{sec:circ}; 
in particular, $G = \IZ_{p_1}\times\cdots\times \IZ_{p_r}$, $H$
is the stabilizer of a root of \eqref{eq:splitp1}, $Y_j$, $j=(j_1,\ldots,j_r) \in G$, is given by \eqref{eq:factor}
and $X_\ell$, $\ell\in G$, by \eqref{eq:circinv.1}.

If $j \in G$, then $Y_j$ depends only on the class $\oj$ of $j$ in $G/H$, so we write
$Y_{\oj} := Y_j$. Then we have
\begin{equation}\label{eq:circeqn}
X_\ell = \frac{1}{k}\sum_{\oj\in G/H} \ep^{-\langle \ell,j\rangle} Y_{\oj},\quad \text{ for all }\, \ell \in H^\perp,
\end{equation}
or 
\begin{equation}\label{eq:circeqninv}
Y_{\oj} = \sum_{\ell \in H^\perp} \ep^{\langle j, \ell\rangle} X_\ell, \quad \text{ for all }\, \oj\in G/H,
\end{equation}
where $\ep = \ep_k := e^{2\pi i/k}$. Note that, if $j$ is any representative of $\oj$ in $G$, then 
$\ep^{-\langle \ell,j\rangle} = \ep_{p_1}^{\ell_1(p_1-j_1)}\cdots \ep_{p_r}^{\ell_1(p_r-j_r)}$,
where $j = (j_1,\ldots,j_r)$ and $\ell = (\ell_1,\ldots,\ell_r)$, depends only $\oj$.
Note also that $X_0 = z + a_1(w,u,x)/k$, in the notation of \eqref{eq:weierpoly}.

Recall that $\IZ_{p_i}^{(i)} := \{0\}^{i-1} \times \IZ_{p_i} \times \{0\}^{r-i}$, $i=1,\ldots,r$.

Consider $i=1$. Any element $j\in G/H$ is the class of an element $j\in G$, and $j$ can be written
$j=\al +\be$, where $\al \in \IZ_{p_1}\times \{0\}\times\cdots\times \{0\} = \IZ_{p_1}^{(1)}$ and
$\be \in \{0\}\times \IZ_{p_2}\times\cdots\times\IZ_{p_r}$. Let $\oG_1$ denote the image of 
$G_1 := \IZ_{p_1}^{(1)}$ in $G/H$, and let $\oF_1$ denote the image of $F_1 := \{0\}\times \IZ_{p_2}\times\cdots\times\IZ_{p_r}$
in $G/H$.

For all $\ell \in H^\perp$, we can rewrite \eqref{eq:circeqn} as
\begin{equation}\label{eq:circeqn1}
X_\ell = \frac{p_1}{k} \sum_{\obe\in \oF_1} \ep^{-\langle \ell,\be\rangle}\,\frac{1}{p_1}
\sum_{\oal\in \oG_1} \ep^{-\langle \ell,\al\rangle} Y_{\oal +\obe},
\end{equation}
where $\al, \be$ are any representatives of $\oal, \obe$ in the subgroups $G_1, F_1$ of $G$ (respectively),
and
\begin{align*}
\ep^{-\langle \ell,\al\rangle} &= \ep_{p_1}^{\ell_1(p_1-\al)},\\
\ep^{-\langle \ell,\be\rangle} &= \ep_{p_2}^{\ell_2(p_2-\be_2)}\cdots \ep_{p_r}^{\ell_r(p_r-\be_r)},
\end{align*}
where $\ell = (\ell_1,\ldots,\ell_r)$. (Again, since $\oal,\,\obe \in G/H$ and $\ell \in H^\perp$, $\ep^{-\langle \ell,\al\rangle}$
and $\ep^{-\langle \ell,\be\rangle}$ depend only on $\oal$ and $\obe$, respectively.)
We can reindex the $Y_{\oj = \oal +\obe}$,\, $\oj\in G/H$, as $Y_{\oal\obe}$,\, $\oal\in \oG_1$, $\obe\in \oF_1$.

Define
\begin{equation}\label{eq:circeqn2}
X_{\mu \obe}^{(1)} := \frac{1}{p_1} \sum_{\oal\in \oG_1} \ep_{p_1}^{\mu(p_1-\al)} Y_{\oal \obe}, \quad \mu \in \IZ_{p_1},\ \obe\in \oF_1;
\end{equation}
i.e., \eqref{eq:circeqn2} is a transformation corresponding to the inner sum in \eqref{eq:circeqn1}. If we order
the indices of the $Y_{\oal \obe}$ and $X_{\mu \obe}^{(1)}$ using lexicographic order of the pairs $(\obe, \oal)$ and
$(\obe, \mu)$ (respectively), taking the indices $\obe \in \oF_1$ in any order,
then the matrix of the linear transformation \eqref{eq:circeqn2} has block diagonal
form, with $k/p_1$ identical blocks given by \eqref{eq:circeqn2} with fixed $\obe$. 

\begin{remark}\label{rem:codim1} 
The transformation \eqref{eq:circeqn2} corresponds to the change of coordinates involved in rewriting in
standard product circular normal form (Example \ref{ex:standardcirc})
at a point $b$ of the codimension one stratum $T_1$. See \eqref{eq:circeqn3} and also \S\ref{subsec:codim1} below.
\end{remark}

Now, $H + G_1$ is a subgroup of $G$, and
\begin{align*}
(H + G_1)^\perp &= \{\ell\in H^\perp: \langle \ell, \al\rangle = 0, \ \text{for all } \al \in G_1\}\\
                          &= \{\ell\in H^\perp: \ell_1 = 0 \mod p_1\}.
\end{align*}

Consider the quotient $H^\perp / (H + G_1)^\perp$. Any element $\oell$ of this quotient is represented by
an element $\ell = (\ell_1,\ldots,\ell_r)\in H^\perp$, and $\ell_1 \in \IZ_{p_1}$ is uniquely determined by $\oell$.

\begin{lemma}\label{lem:Zp1}
\begin{equation*}
\frac{H^\perp}{(H +G_1)^\perp} \cong \IZ_{p_1}\,.
\end{equation*}
\end{lemma}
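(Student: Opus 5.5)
The plan is to realize the quotient by an explicit homomorphism to $\IZ_{p_1}$ and compute its kernel and image. Define
\begin{equation*}
\phi: H^\perp \to \IZ_{p_1},\qquad \ell = (\ell_1,\ldots,\ell_r) \mapsto \ell_1 \bmod p_1 .
\end{equation*}
This is the restriction to $H^\perp$ of the projection $G \to \IZ_{p_1}$, so it is a homomorphism. By the description of $(H+G_1)^\perp$ just given, $\ker\phi = \{\ell\in H^\perp : \ell_1 \equiv 0 \bmod p_1\} = (H+G_1)^\perp$. Hence $\phi$ induces an injection $H^\perp/(H+G_1)^\perp \hookrightarrow \IZ_{p_1}$, and it remains to show that $\phi$ is surjective.

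For surjectivity I would count orders using duality, as in Lemma \ref{lem:isom} and Corollary \ref{cor:subspaces}. Applying Lemma \ref{lem:isom} to the subgroups $H$ and $H+G_1$ of $G$ gives
\begin{equation*}
|H^\perp| = |G/H|, \qquad |(H+G_1)^\perp| = |G/(H+G_1)| .
\end{equation*}
The latter relies on the statement of Lemma \ref{lem:isom} holding for an arbitrary subgroup, which it does. Therefore
\begin{equation*}
\bigl|H^\perp/(H+G_1)^\perp\bigr| = \frac{|G/H|}{|G/(H+G_1)|} = |(H+G_1)/H| = |G_1/(G_1\cap H)| = |\oG_1| .
\end{equation*}

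The main point, and the only place where the hypotheses on $f$ enter, is showing $|\oG_1| = p_1$, i.e.\ $G_1\cap H = 0$. This follows from the setup at the start of Section \ref{sec:circ}: $\IZ^{(1)}_{p_1}$ maps \emph{onto} a cyclic subgroup of order $p_1$ of the permutation group of the roots. Equivalently, $p_1$ is chosen minimal, so that $\ep_{p_1}$ acts on the roots through an element of exact order $p_1$. The kernel of this map is $G_1\cap H$, because $G$ is abelian and the stabilizer of one root fixes every root. Hence $G_1\cap H = 0$ and $|\oG_1| = p_1$. So the injection above is between groups of the same order $p_1$, and it is therefore an isomorphism. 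This also confirms the earlier remark that $\ell_1$ is determined by $\oell$, and shows every residue mod $p_1$ occurs.
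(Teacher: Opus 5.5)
Your proof is correct and follows the paper's argument. The paper uses the same injection $H^\perp/(H+G_1)^\perp\hookrightarrow\IZ_{p_1}$ induced by $\ell\mapsto\ell_1$, and the same order count from Lemma \ref{lem:isom} applied to $H$ and $H+G_1$, together with the fact that $G_1$ maps isomorphically onto $\oG_1$; you additionally justify $G_1\cap H=0$ from the faithfulness of the $\IZ_{p_1}$-action on the roots, which the paper simply recalls from the setup.
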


\begin{proof}
We recall that $G_1 = \IZ_{p_1}^{(1)}$ maps isomorphically onto the subgroup $\oG_1 = \IZ_{p_1}$ of $G/H$. 
The homomorphism $H^\perp \to \IZ_{p_1}$
given by $\ell \mapsto \ell_1$ induces an injection $H^\perp / (H + G_1)^\perp \hookrightarrow \IZ_{p_1} \hookrightarrow G/H$.
So it is enough to show that $|H^\perp / (H + G_1)^\perp| = p_1$. This is a consequence of the fact that $H^\perp \cong G/H$
and $(H+G_1)^\perp \cong G/(H+G_1) \cong (G/H)/\oG_1$. 
\end{proof}

Now, for any $\ell = (\ell_1,\ldots,\ell_r) \in H^\perp$, we can write $\ell = \mu + \nu$, where $\mu = (\ell_1,0,\ldots,0)$
and $\nu = (0,\ell_2,\ldots,\ell_r)$. We can reindex the $X_\ell$ as $X_{\mu \nu}$, so that the transformation
\eqref{eq:circeqn1} can be rewritten as the composite of \eqref{eq:circeqn2} with the transformation
\begin{equation}\label{eq:circeqn3}
X_{\mu \nu} = \frac{p_1}{k} \sum_{\obe\in \oF_1} \ep^{-\langle \ell,\be\rangle} X^{(1)}_{\mu \obe}.
\end{equation}
Since $\ell = (\mu,\nu) \in H^\perp$, $\ep^{-\langle \ell,\be\rangle}$ depends only on $\nu$ and $\obe$.
The change of coordinates \eqref{eq:circeqn3}
involves an invertible linear combination of the variables $X^{(1)}_{\mu \obe}$, $\obe\in \oF_1$,
for each fixed $\mu \in \IZ_{p_1}$.

Taking inverses of the transformations \eqref{eq:circeqn2} and \eqref{eq:circeqn3}, we can write
\begin{equation}\label{eq:circeqn4}
Y_{\oal \obe} = \sum_{\mu \in \IZ_{p_1}} \ep_{p_1}^{\al \mu} X_{\mu \obe}^{(1)} 
= \sum_{\mu \in \IZ_{p_1}} \ep_{p_1}^{\al \mu} \sum \ep^{\langle \be, \nu\rangle'} X_{\mu\nu}.
\end{equation}
where the latter sum is over all $\nu = (\nu_2,\ldots,\nu_r)$ such that $(\mu,\nu) \in H^\perp$, and
\begin{equation*}
\langle \be, \nu\rangle' := \sum_{i=2}^r \frac{k}{p_i}\be_i\nu_i, \quad \text{ so that } \quad  \ep^{\langle \be, \nu\rangle'} 
= \ep_{p_2}^{\be_2\nu_2}\cdots \ep_{p_r}^{\be_r\nu_r}.
\end{equation*}

We are now prepared to address the statement of Theorem \ref{thm:codim1}.

\subsection{Codimension one strata}\label{subsec:codim1}

\begin{proof}[Proof of Theorem \ref{thm:codim1}]
We continue to use the notation of \S\S \ref{subsec:groupcirc}, \ref{subsec:codim1combin}.
Consider 
\begin{equation*}
X_{\mu\nu} = w^{\ga_{\mu\nu}} x_{\mu\nu},
\end{equation*}
(where $X_{00} = x_{00} = z$), corresponding to Definition \ref{def:groupcircintro}. 
At a point of the codimension one stratum $T_1$, we can absorb the 
unit $w_2^{\ga_{\mu\nu}}\cdots w_r^{\ga_{\mu\nu}}$ into $x_{\mu\nu}$, so we have 
\begin{equation*}
X_{\mu\nu} = w_1^{h_{\mu\nu}/p_1} x_{\mu\nu},\quad \text{ where } h_{\mu\nu} \in \{0,\ldots,p_1-1\},
\end{equation*}
after this change of variable.

We recall that, at this point of $T_1$, $f(w,x,z) = f(w_1,x,z)$, $x = (x_{\mu\nu})$, has $k/p_1$ irreducible components.
We claim that these components are given by 
\begin{equation}\label{eq:prodcodim1}
\prod_{\oal\in\IZ_{p_1}} Y_{\oal\obe},\quad \text{ for each fixed } \obe.
\end{equation}
Indeed, for fixed $\obe$, the roots of $f$ corresponding to $Y_{\oal\obe}$, $\oal\in\IZ_{p_1}$, form an orbit of the
action of $\IZ_{p_1}$. It follows that \eqref{eq:prodcodim1} is $\IZ_{p_1}$-invariant and, therefore, a polynomial in $z$
with coefficients that are polynomials in $w_1$ and the $x_{\mu\nu}$, because the coefficients are given by the
elementary symmetric functions of the roots.

Fix $\obe$. Using \eqref{eq:circeqn2}, it is easy to check that, for each $\mu$, 
$v_1^{p_1-\mu} X_{\mu\obe}^{(1)}$
is invariant under the action of $\IZ_{p_1}$. Therefore,
\begin{equation*}
v_1^{p_1-\mu} X_{\mu\obe}^{(1)} = \eta_\mu (v_1^{p_1}, x),
\end{equation*}
where $\eta_\mu(w,x) \in \IC[w,x]$. It follows that, for each $\mu = 0,\ldots,p_1-1$,
$\eta_\mu (v_1^{p_1}, x)$ is
divisible by $v_1^{p_1}$, so that we can write
\begin{equation}\label{eq:coeffone}
X_{\mu\obe}^{(1)} = v_1^{p_1 m_\mu + \mu}\zeta_\mu (v_1^{p_1},x)
                      = w_1^{m_\mu + \mu/p_1} \zeta_\mu (w_1,x),
\end{equation}
where $\zeta_\mu (w_1,x) \in \IC[w_1,x]$ is not divisible by $w_1$.

It follows that $m_\mu =0$ and $\mu/p_1 = h_{\mu\nu}/p_1$,
for some $(\mu,\nu)$, and then that the $h_{\mu\nu}/p_1$ are independent of $\nu$, say, $h_{\mu\nu} = h_\mu$.
Therefore $(h_0,\ldots, h_{p_1-1}) = (0,\ldots,p_1-1)$ and
\begin{equation*}
\prod_{\oal\in\IZ_{p_1}} Y_{\oal\obe} = \De_{p_1} (y_{0\obe}, w_1^{1/p_1}y_{1\obe},\ldots w_1^{(p_1 -1)/p_1}y_{p_1-1,\obe}),
\end{equation*}
where
\begin{equation*}
y_{\mu \obe} = \sum \ep^{\langle \be, \nu\rangle'} x_{\mu\nu}.
\end{equation*}
(Compare with Lemma \ref{lem:prodk}.)
\end{proof}

\subsection{Reduction to group circulant normal form by blowing up}\label{subsec:redcirc}

In this subsection, we prove Theorem \ref{thm:ordpintro}. The proof uses several facts about the desingularization invariant 
$\inv$ of \cite{BMinv}, \cite{BMfunct};
in particular, to reduce to Theorem \ref{thm:normform} below. We can prove the latter following the structure of the
proof of \cite[Thm.\,1.22]{BBR},
and we refer to \cite[\S4.2]{BBR} for the basic details of the desingularization algorithm and the invariant $\inv$ that are needed. We
write $\inv(\nc(p))$ for the value of $\inv$ at a normal crossings point of order $p$; 
$\inv(\nc(p)) =(p,0,1,0,\ldots,1,0,\infty)$, where there are $p$ pairs in the sequence.

\begin{proof}[Proof of Theorem \ref{thm:ordpintro}]\renewcommand{\qedsymbol}{}
To begin the proof, we apply the desingularization algorithm (which transforms $X$ to a hypersurface 
before blowing up any
hypersurface points), and stop the algorithm when the maximum value of the invariant $\inv$ is $\leq \inv(\nc(p))$. If the
maximum value of $\inv$ is $\inv(\nc(p))$, then the maximum locus is a smooth subset $S$ of $X$ (i.e., of the strict
transform of $X$) of dimension $n-p+1$, transverse to the exceptional divisor $E$,
and we can blow up to eliminate any component of $S$ where $X$ is
not generically $\nc(p)$, and to guarantee that $X$ is $\nc(p)$ outside the intersection of $S$ with the exceptional divisor.

The first assertion of Theorem \ref{thm:ordpintro} including item (1) is then a consequence of Theorem \ref{thm:normform} following,
with $k=p$. (After applying this result, $X$ has order $<p$ in a deleted neighbourhood
of $S$, so the desingularization algorithm can be used to reduce to order $< p$ outside $S$.)
For the assertion (2) of Theorem \ref{thm:ordpintro} about a group action, see the Completion of the proof following
that of Theorem \ref{thm:normform} below.
\end{proof}

\begin{theorem}\label{thm:normform}
Consider an embedded hypersurface $X \hookrightarrow Z$. Let $U$ denote an open subset of $Z$.
Assume that (after an $\inv$-admissible sequence of blowings-up), the maximum value of $\inv$ on
$U$ is $\inv(\nc(k))$, so that the stratum $S := \{\inv = \inv(\nc(k))\}$ is a smooth subvariety of codimension $k$
in $U$, transverse to the exceptional divisor $E$. Suppose that $X$ is $\nc(k)$ on $S\backslash E$.
Then there is a finite sequence of $\inv_1$-admissible
blowings-up of $U$, preserving the $\nc(k)$-locus, after which $X$ has (product) group-circulant normal
form at evey point of the strict transform of $S$.

In terms of Definition \ref{def:groupcircintro}, we can assume, moreover, that the parameters $w_1,\ldots,\allowbreak w_r$
appearing in the formula for the product group-circulant singularity at any point of $S \cap E$ each correspond
to some component of $E$.
\end{theorem}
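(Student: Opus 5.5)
We work locally at a point $a_0\in S\cap E$ and argue that the blowings-up used are determined by $\inv$ and the divisor, so that they glue globally; this follows the structure of the proof of \cite[Thm.\,1.22]{BBR}. Since $\inv=\inv(\nc(k))$ along $S$, there are étale (or analytic) coordinates $(w,u,x,z)$ with $S=\{z=x=0\}$ and the components of $E$ at $a_0$ given by $\{w_j=0\}$. In these coordinates $X$ is defined by a Weierstrass polynomial $f$ as in \eqref{eq:weierpoly}, with $f$ in the ideal $(x,z)$. After a Tschirnhausen translation we may take $a_1=0$, and then $z$ is a maximal contact coordinate. Because $X$ is $\nc(k)$ on $S\backslash E$, $f$ splits formally at every point of $S\backslash E$.

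\textbf{Step 1: splitting.} We apply Theorem \ref{thm:splitintro}. Its centres $\{z=x=w_j=0\}$ are components of $S\cap E$. They are therefore $\inv_1$-admissible, they lie in the maximal stratum, they preserve the $\nc(k)$-locus, and they leave $S'\cong S$. Each blowing-up also adds a new exceptional component containing the transform of $S\cap\{w_j=0\}$, so the $w_j$ continue to correspond to components of $E$. Afterwards $f(v^{p},u,x,z)$ splits over $\IC\llb v,u,x\rrb$. We then factor $f$ into its irreducible factors at $a_0$ and treat each factor separately, which is how product normal form arises. For an irreducible factor, the discussion at the start of Section \ref{sec:circ} supplies the groups $G=\IZ_{p_1}\times\cdots\times\IZ_{p_r}$ and $H$, the roots $Y_{\oj}$, and the circulant coordinates $X_\ell$, $\ell\in H^\perp$, defined by \eqref{eq:circeqn}. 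This gives $f=\De_{G/H}(X_{\ell_0},\dots,X_{\ell_{k-1}})$ with $X_{\ell_0}=z$.

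\textbf{Step 2: the $X_\ell$ have the expected monomial form.} For $\ell\in H^\perp$, $X_\ell$ transforms under $G$ by the character $\ell$. So $v_1^{p_1-\ell_1}\cdots v_r^{p_r-\ell_r}X_\ell$, with exponents read mod $p_i$, is $G$-invariant, and hence is a series in $w,u,x$. This gives
\[
X_\ell=w^{\ga_\ell}\,\zeta_\ell(w,u,x),\qquad \ga_{\ell i}=\ell_i/p_i ,
\]
after first extracting the largest integral monomial factor in $w$, exactly as in \eqref{eq:coeffone}. Each $\zeta_\ell$ lies in the ideal $(x)$, because $f\in(x,z)$ and every $Y_{\oj}$ vanishes on $S$.

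\textbf{Step 3: cleaning.} The goal is to make each $\zeta_\ell$, $\ell\neq0$, part of a coordinate system. This means reducing to the case where $\zeta_\ell\equiv x_{\ell}$ modulo $(x)^2$, up to an invertible linear change, with no integral powers of $w$ remaining. The linear parts of the $\zeta_\ell$ in $x$ have coefficients in $\IC\llb w,u\rrb$. Since $X$ is $\nc(k)$ off $E$, these linear parts are independent wherever $w_1\cdots w_r\neq0$. Their failure of independence is measured by a determinant, which after resolution is a monomial in $w$ times a unit.

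I would follow the cleaning argument of \cite[Thm.\,1.22]{BBR}. We blow up centres $\{z=x=w_j=0\}$, which rescales $x\mapsto w_jx$ and $z\mapsto w_jz$. This removes integral powers of $w$ from the $X_\ell$ and successively lowers the monomial exponents of the minors, using $\inv_1$ (order of the coefficient ideal in $w$) as the decreasing measure. Once the linear parts are monomial times invertible, we absorb integral powers of $w$ into the $\zeta_\ell$. Fractional parts of the exponents stay in $\{0,\dots,(p_i-1)/p_i\}$, as Definition \ref{def:groupcircintro} requires. Nonlinear terms in $\zeta_\ell$ are removed by the coordinate change $x_\ell:=\zeta_\ell$, which is legitimate once the linear parts form a basis. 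Theorem \ref{thm:codim1} serves as a consistency check: on each codimension one stratum the exponents must be exactly the standard circulant ones.

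\textbf{Main obstacle.} The hardest point is Step 3. The normalization has to be made simultaneously for all $\ell$ and all $w_i$. It also has to be shown to terminate and to be independent of choices, so that the centres are global components of $S\cap E$. Finally, $H^\perp$ and the exponents must stay stable under further blowings-up, which needs the invariance of $p_i$ under blowing up $\{w_j=0\}$ for $j\ne i$.

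For this I would take as the invariant the tuple of exponent vectors of the determinant of the linear-part matrix on each stratum $\{x=w_i=0\}$. Blowing up $\{z=x=w_i=0\}$ lowers its $w_i$-exponent by a fixed positive amount whenever it is nonzero, which gives termination.
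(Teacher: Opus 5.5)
Your Steps 1 and 2 agree with the paper, but Step 3 has a genuine gap: blowings-up with centres $\{z=x=w_j=0\}$ cannot do any cleaning. In the $w_j$-chart the strict transform is $w_j^{-k}f(w,u,w_jx,w_jz)$, so each $X_\ell=w^{\ga_\ell}\zeta_\ell$ becomes $w^{\ga_\ell}\,w_j^{-1}\zeta_\ell(w,u,w_jx)$. Since $\zeta_\ell\in(x)$, two things are unchanged: the monomial $w^{\ga_\ell}$, integral part included, and the linear part $\sum_m c_{\ell m}(w,u)x_m$. Only the terms of degree $d\ge 2$ in $x$ acquire a factor $w_j^{d-1}$. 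That is exactly why these centres suffice in the Splitting Theorem, where only denominators in higher-order terms must be cleared. Consequently these blowings-up do not remove integral powers of $w$, the determinant of your linear-part matrix is invariant, and your termination measure never drops. Also, $\inv_1$ is not a measure of $w$-exponents: it is the constant pair $(k,0)$ along $S$, and $\inv_1$-admissibility only says the centre lies in the order-$k$ locus. Finally, a monomial determinant is not the right target. For example, $\zeta_1=w_1x_1+w_2x_2$, $\zeta_2=x_2$ has determinant $w_1$. Yet $w^{\ga_1}\zeta_1$ is not a monomial times a coordinate in any coordinates where $w_1,w_2$ define the divisor, since $\zeta_1$ has order $2$ and is divisible by no $w_i$.

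The missing idea is to use the hypothesis $\inv=\inv(\nc(k))$ at the points of $S\cap E$ through the successive coefficient ideals of $(f,k)$; your Step 3 uses only that $X$ is $\nc(k)$ off $E$.

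\begin{itemize}
\item On $N^1=\{z=0\}$ the coefficient ideal is generated by the $X_\ell^k=w^{k\ga_\ell}\zeta_\ell^k$, $\ell\neq0$. The entry $1$ of $\inv$ forces a termwise minimum $\al_1$ of the exponents, attained by some $\ell^1$ with $\xi_{\ell^1}=\zeta_{\ell^1}$ of order one.
\item Next write $\zeta_\ell=\eta^{(1)}_\ell+w^{\be^{(1)}_\ell}\xi^{(1)}_\ell$ with $\eta^{(1)}_\ell\in(\xi_{\ell^1})$, pass to $N^2=\{z=\xi_{\ell^1}=0\}$, and repeat.
\item This yields an ordering $\ell^1,\dots,\ell^{k-1}$ of $H^\perp\setminus\{0\}$ and a termwise increasing chain of monomial parts $w^{\al_j}$, i.e.\ a triangular structure with comparable monomial pivots.
\end{itemize}

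Cleaning then blows up the codimension-one combinatorial centres $\{w_i=0\}$ inside the maximal contact subspaces, namely $\{z=\xi_{\ell^1}=\cdots=\xi_{\ell^{j-1}}=w_i=0\}$, working successively in $N^{k-1},\dots,N^1$. These centres contain $S\cap\{w_i=0\}$ but are not contained in $S$. In the $w_i$-chart they lower the $w_i$-exponents of $X_{\ell^j},\dots,X_{\ell^{k-1}}$ relative to those of $X_{\ell^1},\dots,X_{\ell^{j-1}}$. This is what kills the integral parts $\be_j$ of $\frac{1}{k}(\al_j-\al_{j-1})$.

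After the coordinate change $y_{\ell^j}=\eta^{(j-1)}_{\ell^j}+\xi_{\ell^j}$ one gets $\De_{G/H}(z,w^{\de_1}y_{\ell^1},\dots,w^{\de_1+\cdots+\de_{k-1}}y_{\ell^{k-1}})$. The remaining integral parts of the cumulative exponents are then removed as in \cite[Remark 4.5]{BBR}. Your proposal needs this maximal-contact cleaning in place of further blowings-up of components of $S\cap E$.
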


In this assertion, $\inv_1$ denotes the truncation of $\inv$ after the first of the pairs comprising $\inv$; see
\cite[\S4.2]{BBR}.

As in the case of \cite[Thm.\,1.22]{BBR}, we can divide the proof of Theorem \ref{thm:normform} into two parts. The first
part is a local version; it begins with the hypotheses of the Splitting Theorem \ref{thm:splitintro} and proves
Theorem \ref{thm:normform} in this setting. The second part is to show that the \emph{cleaning blowings-up}
involved in the proof following make sense as global admissible smooth blowings-up.
The global cleaning argument in the second part is identical to that in \cite[\S4.4]{BBR}, so we refer to the latter
and do not repeat the argument here; we present only the proof of the local part below.

For the local part, moreover, we give only the proof in the irreducible case below. The general case follows
exactly as in the proof of \cite[Thm.\,1.22]{BBR}, and we leave the details to the reader.

\begin{proof}[Proof of Theorem \ref{thm:normform}]
By Theorem \ref{thm:splitintro}, after finitely many $\inv$-admissible blowings-up with successive centres
given by the intersection of $S$ with a component of $E$,
we can assume that, at any given point $a$ of the stratum $S$, $X$ is defined by a regular
or analytic function $f(w,u,x,z)$ as in \eqref{eq:weierpoly},
which satisfies the conclusion of Theorem \ref{thm:splitintro}, 
in suitable coordinates $(w,u,x,z)$ in an \'etale or local analytic neighbourhood of $a$.
Let us assume that $f(w,u,x,z)$ is irreducible, so
we can follow the setup at the beginning
of Section \ref{sec:circ}.

We can again assume that $a_1(w,u,x)=0$, and we use the notation of \S\ref{subsec:circcomb}.
Define $Y_j$, $j=(j_1,\ldots,j_r) \in G$, as in \eqref{eq:factor}, and $X_\ell$, $\ell\in G$, as in \eqref{eq:circinv.1};
in particular, $X_0 = z$. If $j \in G$, then $Y_j$ depends only on the class $\oj$ of $j$ in $G/H$, so we write
$Y_{\oj} := Y_j$. Then, for all $\ell \in H^\perp$, we have
\begin{equation*}
X_\ell = \frac{1}{k}\sum_{\oj\in G/H} \ep^{-\langle \ell,j\rangle} Y_{\oj}\,,
\end{equation*}
where $\ep = \ep_k := e^{2\pi i/k}$. Note that, if $j$ is any representative of $\oj$ in $G$, then 
$\ep^{-\langle \ell,j\rangle} = \ep_{p_1}^{\ell_1(p_1-j_1)}\cdots \ep_{p_r}^{\ell_1(p_r-j_r)}$,
where $j = (j_1,\ldots,j_r)$ and $\ell = (\ell_1,\ldots,\ell_r)$, depends only $\oj$.

We describe the coefficient ideal of the marked ideal $(f,k)$ following the proof of \cite[Thm.\,4.1]{BBR},
and we refer to the latter for more details in a simpler situation.

Consider $\ell = (\ell_1,\ldots,\ell_r) \in H^\perp$. It is easy to see that each 
\begin{equation*}
v_1^{p_1-\ell_!}\cdots v_r^{p_r-\ell_r} X_\ell
\end{equation*}
is invariant under the action of $G = \IZ_{p_1}\times \cdots \times \IZ_{p_r}$. Therefore, 
\begin{equation*}
v_1^{p_1-\ell_!}\cdots v_r^{p_r-\ell_r} X_\ell = \eta_\ell (v_1^{p_1},\ldots v_r^{p_r}, u,x),
\end{equation*}
where $\eta_\ell(w,u,x) \in \IC\llb w,u,x\rrb$. It follows that, for each $\ell \in H^\perp \backslash \{0\}$, $\ell = (\ell_1,\ldots,\ell_r)$, 
$ \eta_\ell (v_1^{p_1},\ldots v_r^{p_r}, u,x)$ is
divisible by $v_i^{p_i}$, $i=1,\ldots,r$, so that we can write
\begin{equation}\label{eq:coeff}
\begin{aligned}
X_\ell &= v_1^{p_1 m_{\ell,1} + \ell_1}\cdots v_r^{p_r m_{\ell,r} + \ell_r} \zeta'_\ell (v_1^{p_1},\ldots,v_r^{p_r},u,x)\\
                      &= w_1^{m_{\ell,1} + \ell_1/p_1} \cdots w_r^{m_{\ell,r} + \ell_r/p_r} \zeta_\ell (w_1,\ldots,w_r,u,x),
\end{aligned}
\end{equation}
where $\zeta_\ell (w_1,\ldots,w_r,u,x) \in \IC\llb w,u,x\rrb$ is divisible by no $w_i$ (cf. \eqref{eq:coeffone}).

Write $k_i := k/p_i$,\, $i=1,\ldots,r$.
As in the proof of \cite[Thm. 4.1]{BBR}, the coefficient ideal of the marked ideal $(f,k)$ is equivalent to the marked ideal
$\ucC^1 = (\cC^1,k)$, where $\cC_1$ denotes the ideal generated by
$$
X_\ell^k  = w^{\ga_\ell} \zeta^k_\ell, \quad \ell \in H^\perp \backslash \{0\},
$$
on the maximal contact subspace $N^1 := \{z=0\}$, and $w^{\ga_\ell}$ is the monomial
\begin{equation*}
w^{\gamma_{\ell}} := w_1^{ k m_{\ell,1} + k_1 \ell_1} \cdots w_r^{k m_{\ell,r} + k_r \ell_r}.
\end{equation*}

Since $\inv(0) = (k,0,1,\ldots)$, there exists a smallest $\ga_\ell$, with respect to termwise order (i.e, $\ga \leq \ga'$ means
$\ga_i\leq \ga'_i$, $i=1,\ldots,r$, where $\ga = (\ga_1,\ldots,\ga_r),\, \ga' = (\ga'_1,\ldots,\ga'_r)$). In other words, 
\begin{equation*}\label{eq:mon1}
\al_1 := \min_{\ell\in H^\perp\backslash \{0\}} \gamma_{\ell}
\end{equation*}
is well-defined. 
Let $\ell^1$ denote a corresponding $\ell$ (realizing the minimum), and let $\xi_{\ell^1}$ denote the corresponding $\zeta_\ell$. 
Then $\xi_{\ell^1}$ has order $1$, since $\inv(0) = (k,0,1,\dots)$.
The monomial $w^{\al_1}$ generates
the monomial part of the coefficient ideal $\ucC^1$ (cf. \cite[\S A.6]{BMmin1}). 

It follows that the second coefficient ideal $\ucC^2$ (still with marked or associated order $k$), on the second
maximal contact subspace $N^2 := \{z=\xi_{\ell^1}=0\}$, is generated by
\begin{equation*}
w^{\gamma_\ell -\al_1} \zeta_{\ell}^k |_{N^2}\,,\quad \ell\neq \ell^1.
\end{equation*}
Therefore, for each $\ell\neq\ell^1$, we can write
\begin{equation*}
\zeta_\ell = \eta_\ell^{(1)} + w^{\beta_{\ell}^{(1)}}  \xi_\ell^{(1)},
\end{equation*}
where $\beta_{\ell}^{(1)} \in (1/k)\IN^r$,
$\eta_\ell^{(1)}$ is in the ideal generated by $\xi_{\ell^1}$, and $\xi_\ell^{(1)}|_{N^2}$ is not divisible by $w_i$, $i=1,\ldots,r$. 

Since $\inv(0) = (k,0,1,\ldots)$, there exists a smallest $\gamma_{\ell} + k\beta^{(1)}_{\ell}$, $\ell \neq \ell^1$; i.e.,
\begin{equation*}
\al_2 := \min_{\ell \neq \ell^1} \left(\gamma_{\ell} + k\beta_{\ell}^{(1)}\right)
\end{equation*}
is well-defined. Let
$\ell^2$ denote a corresponding $\ell$ (realizing the minimum), and let $\xi_{\ell^2}$ denote the corresponding $\xi_{\ell}^{(1)}$. 
Then $\xi_{\ell^2}$ has order $1$, since $\inv(a) = (k,0,1,0,1\dots)$. The monomial $w^{\al_2 -\al_1}$ generates the monomial part of the coefficient ideal $\ucC^2$. 

We can repeat this argument recursively for $3,\ldots,k-1$, to obtain exponents $\alpha_1,\ldots,\alpha_{k-1}$, as well as an ordering 
$\{0=\ell^0, \ell^1,\ldots,\ell^{k-1}\}$ of $H^\perp$. Note that, for each $j=1,\ldots,k-1$,
\begin{equation*}
\frac{1}{k}\alpha_{j} = \frac{1}{k}\alpha_{j-1} + \beta_{j} + \delta_{j},
\end{equation*}
where $\al_0 := 0$, $\beta_j = (\be_{j1},\ldots,\be_{jr})\in \IN^r$ and $\delta_j = (\delta_{j1},\ldots,\delta_{jr})$, with 
$\delta_{ji} \in \{0,1/p_i,\ldots,(p_i-1)/p_i\}$.

We can now apply a cleaning procedure as in the proof of \cite[Thm.\,4.1]{BBR}. We first blow up with codimension $1$
combinatorial centres $\{w_i =0\}$ ($1\leq i \leq r$), in the maximal
contact subspace $N^{k-1} = \{z=\zeta_{\ell^1}=\ldots= \zeta_{\ell^{k-2}}=0\}$ to reduce to $\beta_{k-1} =0$. 
We can continue this process successively in the
maximal contact subspaces $N^{k-2},\ldots,N^1$,
in order to get $\beta_j =0$ for all $j$. 

We can then make a formal (or \'etale or analytic) coordinate change
\begin{align*}
y_{\ell^1} &:= \xi_{\ell^1},\\
y_{\ell^j} &:= \eta_{\ell^j}^{(j-1)} +  \xi_{\ell^j}, \quad j=2,\dots,k-1,
\end{align*}
to reduce each $X_{\ell^j}$ to $w^{\alpha_j} y_{\ell^j}$,\, $j =1,\ldots,k-1$. We conclude that, after cleaning, $f$ can be written as
\begin{equation}\label{eq:delta}
\De_{G/H}\left(z, w^{\de_1} y_{\ell^1}, w^{\de_1 + \de_2} y_{\ell^2}, \ldots, w^{ \de_1+\cdots +\de_{k-1}} y_{\ell^{k-1}}\right), 
\end{equation}
where each $\de_j=(\de_{j1},\ldots,\de_{jr})$ with $\de_{ji} \in \frac{1}{p_i}\{0,\ldots,p_i-1\}$.

Note that \eqref{eq:delta} can be rewritten as 
\begin{equation}\label{eq:delta1}
\De_{G/H}\left(z, w^{\be_1 + \gamma_1} y_{\ell^1}, w^{\be_2 + \gamma_2} y_{\ell^2}, \ldots, w^{ \beta_{k-1} + \gamma_{k-1}} y_{\ell^{k-1}}\right), 
\end{equation}
where each $\be_j \in \IN^r$ and each $\ga_j=(\gamma_{j1},\ldots,\gamma_{jr})$, with $\gamma_{ji} \in \frac{1}{p_i}\{0,\ldots,p_i-1\}$.

We can now argue as in \cite[Remark 4.5]{BBR} to reduce each $\be_j$ in \eqref{eq:delta1} to $0$; i.e., we get the normal form
\begin{equation}\label{eq:delta2}
\De_{G/H} \left(X_{\ell^0}, X_{\ell^1}, \ldots, X_{\ell^{k-1}}\right),
\end{equation}
where
\begin{align*}
X_{\ell^0} &= z,\\
X_{\ell^j} &= w^{\ga_j} y_{\ell^j},\quad j=1,\ldots, k-1,
\end{align*}
with the preceding condition on the exponents $\ga_j$.
\end{proof}

\begin{proof}[Completion of the proof of Theorem \ref{thm:ordpintro}]
First, we obtain the smooth subset $S$ with the hypotheses of the Splitting 
Theorem \ref{thm:splitintro} using canonical resolution of singularities. All centres of blowing up involved are invariant
with respect to a group action, so that all exceptional divisors in the hypotheses of Theorem \ref{thm:splitintro} are
invariant under the action of $G$.
Since the normal crossings locus is also invariant with respect to a group action, it follows that the centres of all
blowings-up in Theorem \ref{thm:splitintro} are also invariant.

At the beginning of the proof of Theorem \ref{thm:normform}, therefore, the exceptional divisors (corresponding to the $\{w_i = 0\}$)
all invariant with respect to the group action. The group action takes maximal contact subvarieties to maximal contact
subvarieties, and it preserves the exceptional divisors $\{w_i = 0\}$, so the centres of blowing up involved in cleaning
are invariant with respect to the group action. 
Finally, at the end of the proof of Theorem \ref{thm:normform}, we argue as in \cite[Remark 4.5]{BBR} to reduce each
$\be_j$ to zero in \eqref{eq:delta1}  (i.e., to get group-circulant normal form), and the centres of blowing up
involved in the argument of \cite[Remark 4.5]{BBR} are also invariant.
\end{proof}

\section{Weighted blowing up of group-circulant singularities, and partial desingularization}\label{sec:wtblup}

In this section, we prove Theorem \ref{thm:orb1pintro} on partial desingularization. The theorem
involves weighted blowing up of the locus of 
group-circulant singularities of a given order. We use weighted blowings-up only in this situation.
A weighted blowing-up of an algebraic (or complex-analytic) variety $X$ provides an example of an 
orbifold, with covering charts (as in \eqref{eq:orbtriangle}) with finite abelian group actions, and a proper
birational (or bimeromorphic) morphism $\s: X' \to X$ induced by the quotients. We develop the relevant technology
of weighted blowing up and orbifolds only in the context needed.

We will work mainly in the algebraic case, but the methods apply to the complex-analytic case,
essentially using \cite[Ch.\,1]{AHV} on $\Specan$ (cf.\,\cite[\S1.6]{ABTW}). 
We will use the invariant for desingularization by weighted-blowings-up of Abramovich, Temkin and W{\l}odarczyk \cite{ATW19}, 
and it is convenient to use the formulation of W{\l}odarczyk \cite{Wlodar} in terms of cobordant blowings-up, which provide
an explicit birational morphism $\s: X' \to X$ without having to consider a stalk-theoretic quotient.

Weighted blowings-up of group-circulant singularities provide a good hands-on introduction to more general
techniques. In the complex-analytic case, for example, all arguments needed for Theorem \ref{thm:orb1pintro} 
can be carried out directly, without
using \cite{ATW19}, \cite{Wlodar}. We plan to present 
this in a follow-up article, in order to keep this paper of reasonable length.

The section is orgranized into three subsections. First, we recall the definition of weighted
blowing up of an affine space, together with the \'etale transition morphisms between the orbifold covering charts. Secondly,
we describe in detail the weighted blowing-up of a group-circulant singularity in local coordinates, and the way that the
construction enters into partial desingularization. (The special case of a cyclic group circulant singularity was treated
in Example \ref{ex:orbcpkintro}.) Finally, we prove Theorem \ref{thm:orb1pintro}. The combinatorial structure
of group-circulant singularities plays an important part. 

\subsection{Weighted blowing-up of an affine space}\label{subsec:wtblup}
In this subsection, we work with $\IC^n$ and the multiplicative group $\IC^*$, or, more generally, with
affine space $\IA^n$ over an (algebraically closed) field $\IK$ and the abelian multiplicative group (scheme)
$G_m$. 

\begin{definition}\label{def:wtproj}
Let $\om = (\om_1,\ldots,\om_n) \in \IN^n$ denote a \emph{weight vector}. The \emph{weighted projective space}
$\IP_{\om}^{n-1}$ is defined as the quotient or orbit space
\begin{equation*}
\IP_\om^{n-1} := (\IA^n\backslash\{0\}) / G_m
\end{equation*}
by the action of $G_m$ given by
\begin{equation*}
\la\cdot (y_1,\ldots,y_n) = (\la^{\om_1}y_1,\ldots,\la^{\om_n}y_n),
\end{equation*}
where $y=(y_1,\ldots,y_n)\in \IA^n$ and $\la \in G_m$.

Group actions are always understood to mean actions by automorphisms. Points of $\IP_\om^{n-1}$ are denoted
$[y_1,\ldots,y_n]$, where $[y_1,\ldots,y_n]$ means the equivalence class or orbit of $(y_1,\ldots,y_n) \in \IA^n\backslash\{0\}$.
\end{definition}

The weighted projective space $\IP_\om^{n-1}$ can be covered by $n$ affine charts
\begin{equation*}
U_i = \{[y_1,\ldots,y_n] \in \IP_\om^{n-1}: y_i \neq 0\},\quad i=1,\ldots,n.
\end{equation*}
Every point of $U_i$ can be written $[y_1,\ldots,1,\ldots,y_n]$ (with $1$ in the $i$th place).

For each $i$, let $A_i$ denote the affine quotient variety (or orbit space) $\IA^{n-1}/\mu_{\om_i}$,
where the action of the multiplicative cyclic group $\mu_{\om_i}$ on $\IA^{n-1}$ is given by
\begin{equation*}
\ep_i \cdot (y_1,\ldots,\hy_i,\ldots,y_n) = (\ep_i^{\om_1}y_1,\ldots,\widehat{\ep_i^{\om_i}y_i},\ldots,\ep_i^{\om_n}y_n),
\end{equation*}
and $\ep_i = \ep_{\om_i}$ denotes a primitive $\om_i$th root of unity. (A hat means that the corresponding entry is removed.)

\begin{remark}\label{rem:quot}
The preceding action of $\mu_{\om_i}$ corresponds to an action on $\IK[y_1,\ldots,\hy_i,\allowbreak \ldots,y_n]$ given by
$\ep_i\cdot y_j = \ep_i^{\om_j}y_j$, for each $j$, and 
\begin{equation*}
\IA^{n-1}/\mu_{\om_i} = \Spec\, \IK[y_1,\ldots,\hy_i,\ldots,y_n]^{\mu_{\om_i}},
\end{equation*}
where $\IK[y_1,\ldots,\hy_i,\ldots,y_n]^{\mu_{\om_i}}$ denotes the algebra of $\mu_{\om_i}$-invariant polynomials. The
affine quotient variety $A_i$ is determined by the ideal of relations among a set of generators (Hilbert basis)
for the algebra of invariant polynomials.
\end{remark}

It is easy to see that the mapping $U_i \to A_i$ given by taking $[y_1,\ldots,1,\ldots,y_n]$ to the orbit of
$(y_1,\ldots,\hy_i,\ldots,y_n)$ is a well-defined morphism (independent of the choice of $y_1,\ldots,\hy_i,\ldots,y_n$,
and defines an isomorphism $U_i \cong A_i$.

For each $i$, we thus have a morphism $\pi_i: \IA^{n-1} \to \IA^{n-1}/\mu_{\om_i} \cong U_i \subset \IP_\om^{n-1}$.
We call $\IA^{n-1}$ a \emph{covering (affine) chart} and $A_i \cong U_i$ a \emph{quotient chart}. The collection of
morphisms $\pi_i$, $i=1,\ldots,n$, gives the weighted projective space $\IP_\om^{n-1}$ the structure of an orbifold
(see below).

\begin{definition}\label{def:wtblup}
\emph{Weighted blowing-up of an affine space.} 
Let $Z \subset \IA^n \times \IP_\om^{n-1}$ denote the subvariety
\begin{equation*}
Z := \{(x,[y]) \in \IA^n \times \IP_\om^{n-1}: x \in \overline{[y]}\},
\end{equation*}
where $\overline{[y]}$ denotes the closure of the $G_m$-orbit $[y]$ of $y=(y_1,\ldots,y_n) \in \IA^n\backslash \{0\}$; i.e.
\begin{equation*}
Z := \{(x,[y]) \in \IA^n \times \IP_\om^{n-1}: x_j = t^{\om_j} y_j,\, j=1,\ldots,n,\, \text{ for some } t\in \IA^1\}.
\end{equation*}
The \emph{weighted blowing-up} of $W = \IA^n$ with (set-theoretic) \emph{centre} $\{0\}$ and weights $\om = (\om_1,\ldots,\om_n)$
corresponding to the affine coordinates $(x_1,\ldots,x_n)$,
is the morphism $\s: Z \to W = \IA^n$ induced by the projection $\IA^n \times \IP_\om^{n-1} \to \IA^n$ (or simply the variety $Z$). 
The weighted blowing-up $\s$
is an isomorphism over $\IA^n\backslash \{0\}$, and $E = \s^{-1}(0)$ is called the \emph{exceptional divisor}.
\end{definition}

The weighted blowiing-up $Z$ is covered by $n$ affine charts $V_1,\ldots,V_n$, where
\begin{equation*}
V_i = \{(x,[y]) \in Z: [y] \in U_i \subset \IP_\om^{n-1}\};
\end{equation*}
i.e., where $V_i$ is the set of points $(x,[y]) \in Z$ such that $y = [y_1,\ldots,1,\ldots,y_n]$ (with $1$ in the $i$th place),
and 
\begin{equation}\label{eq:wtblupmodel}
x_j = 
\begin{cases}
t^{\om_j} y_j, & j\neq i,\\
t^{\om_i},     & j=i,
\end{cases}
\end{equation}
for some $t \in \IA^1$.

For each $i$, there is a surjective morphism
\begin{align*}
\varphi_i : W_i = \IA^n &\to V_i\\
(y_1,\ldots,y_n) &\mapsto \left( (y_i^{\om_1}y_1,\ldots,y_i^{\om_i},\ldots,y_i^{\om_n}y_n), [y_1,\ldots,1,\ldots,y_n]\right).
\end{align*}
We will also write $W_i = W_{x_i}$.
In general, the morphism $\varphi_i$ is not injective; in fact, $\varphi_i(y) = \vp_i(z)$ if and only if $z$ belongs to the
orbit of $y$ under the action of $\mu_{\om_i}$ on $W_i$ given by $\xi\cdot y = z$, where
\begin{equation*}
z_i = \xi y_i \quad \text{ and }\quad z_j = \xi^{-\om_j} y_j,\,\, j \neq i,
\end{equation*}
for any $\xi \in \mu_{\om_i}$.
In other words, $V_i \cong W_i/\mu_{\om_i}$, for each $i$. Note that the action of the group $\mu_{\om_i}$ is free
outside the exceptional divisor $\vp_i^{-1}(E)$; therefore, the morphism $\vp_i$ is \'etale outside $\vp_i^{-1}(E)$.

\medskip
Lemma \ref{lem:trans} following asserts that the morphisms $\vp_i: W_i \to W_i/\mu_{\om_i}\cong V_i$, $i=1,\ldots,n$, provide 
what is called an \emph{orbifold structure}
for the weighted blowing-up $Z$, with covering charts $W_i$ and quotient charts $V_i$. We will sometimes
write $\tZ$ for the collection of covering charts, and $\tZ_\quot \cong Z$ for the quotient variety (cf. \eqref{eq:orbtriangle}).

\begin{remark}\label{rem:trans}
\emph{Transition between covering charts $W_i = W_{x_i},\, W_j = W_{x_j}$.} Say $i=1,\,j=2$. 
Let us re-label the coordinates $(y_1,\ldots,y_n)$ above as $(s,y_2,\ldots,y_n)$ in the case of $W_1$, and
$(z_1,t,z_3,\ldots,z_n)$ in the case of $W_2$. Then the transition between $W_1,\,W_2$ is given by the formulas
\begin{equation}\label{eq:trans}
\begin{aligned}
s^{\om_1} &= x_1 =  t^{\om_1}z_1,\\ 
s^{\om_2}y_2 &= x_2 = t^{\om_2},\\
s^{\om_j}y_j &= x_j = t^{\om_j}z_j, \quad j>2.
\end{aligned}
\end{equation}
It does not make sense to write $W_1\cap W_2$. The transition formulas determine a $\mu_{\om_2}$-invariant
open subset $W_{12}= W_{x_1x_2}$ of $W_2$, and a $\mu_{\om_1}$-invariant open subset $W_{21}= W_{x_2x_1}$ of $W_1$, corresponding
to $V_1 \cap V_2$ in $\tZ_\quot$. Clearly, $W_{12} = \{(z_1,t,z_3,\ldots,z_n) \in W_2: z_1\neq 0\}$
and $W_{21} = \{((s,y_2,\ldots,y_n) \in W_1: y_2\neq 0\}$ .The following lemma makes sense of the overlap of covering charts using
\'etale morphisms from a new covering chart.
\end{remark}

\begin{lemma}\label{lem:trans}
Given covering charts $W_i,\, W_j$ as above, there are
\begin{enumerate}
\item covering charts $\tW_{ji},\, \tW_{ij}$ with actions of 
$\mu_{\om_i}\times\mu_{\om_j},\,  \mu_{\om_j}\times\mu_{\om_i}$ (respectively) and an equivariant isomorphism $\tW_{ji} \cong \tW_{ij}$, where 
\begin{equation*}
\tW_{ji}/(\mu_{\om_i}\times\mu_{\om_j}) \cong \tW_{ij}/(\mu_{\om_j}\times\mu_{\om_i}) \cong V_i\cap V_j \subset Z;
\end{equation*}
\item \'etale morphisms
\begin{equation*}
\tW_{ji} \to W_i,\quad \tW_{ij} \to W_j,
\end{equation*}
commuting with the projections to $Z$ given by the group quotients, whose images are $W_{ji},\, W_{ij}$ (respectively).
\end{enumerate}
\end{lemma}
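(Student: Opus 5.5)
We construct $\tW_{ji}$ explicitly as the fibre product of the two covering charts over $Z$, restricted to the overlap. With the notation of Remark \ref{rem:trans} ($i=1$, $j=2$), take $\tW_{21}$ to be the closed subvariety of $W_{21}\times W_{12}$ cut out by the transition equations \eqref{eq:trans}:
\begin{equation*}
s^{\om_1} = t^{\om_1}z_1,\quad s^{\om_2}y_2 = t^{\om_2},\quad s^{\om_k}y_k = t^{\om_k}z_k\ (k>2),
\end{equation*}
with $y_2\neq 0$, $z_1\neq 0$. The group $\mu_{\om_1}\times\mu_{\om_2}$ acts factorwise, $\mu_{\om_1}$ on the $W_1$-coordinates and $\mu_{\om_2}$ on the $W_2$-coordinates, as in the description of $\vp_1,\vp_2$. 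Since both $\vp_1$ and $\vp_2$ factor through $Z$, the equations are invariant. Then $\tW_{12}$ is the same variety with the factors swapped, so the equivariant isomorphism $\tW_{21}\cong\tW_{12}$ is tautological.

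The key step is to identify $\tW_{21}$ as a smooth variety étale over each factor. On $W_{21}$ we have $y_2\neq0$, and the second equation gives $t^{\om_2} = s^{\om_2}y_2$. Adjoining an $\om_2$-th root $\eta$ of the unit $y_2$, we have $t=\xi s\eta$ with $\xi\in\mu_{\om_2}$. The remaining equations then determine $z_1 = \xi^{-\om_1}\eta^{-\om_1}$ and $z_k = \xi^{-\om_k}\eta^{-\om_k}y_k$ uniquely, since $s$ cancels on the locus where the weights act. Away from $s=0$ this is immediate. Along $s=0$ I would instead work with the normalized description
\begin{equation*}
\tW_{21}\cong\{(s,y_2,\ldots,y_n,\eta): \eta^{\om_2}=y_2,\ y_2\neq0\},
\end{equation*}
which I take as the definition (the fibre product may fail to be reduced or normal along $s=0$, so one should pass to this normalization). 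The projection to $W_{21}$ is then the finite étale Kummer cover $\eta\mapsto\eta^{\om_2}=y_2$, étale because $y_2$ is invertible and we are in characteristic zero. The map to $W_2$ is given by $t=s\eta$, $z_1=\eta^{-\om_1}$, $z_k=\eta^{-\om_k}y_k$, with image $W_{12}=\{z_1\neq0\}$. By symmetry, or by writing $s=t\theta$ with $\theta^{\om_1}=z_1$, the same variety is a Kummer cover of $W_{12}$, which gives (2). I would check commutation with the maps to $Z$ by computing $x_k$ on both sides, for instance $x_1=s^{\om_1}=t^{\om_1}z_1$.

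For (1), we must show $\tW_{21}/(\mu_{\om_1}\times\mu_{\om_2})\cong V_1\cap V_2$. The Kummer cover $\tW_{21}\to W_{21}$ is a $\mu_{\om_2}$-torsor, acting on $\eta$, so $\tW_{21}/\mu_{\om_2}=W_{21}$. The remaining $\mu_{\om_1}$-action lifts, with $\eta\mapsto\xi^{-\om_2/\ldots}$ adjusted appropriately. We then have $W_{21}/\mu_{\om_1}=\vp_1(W_{21})=V_1\cap V_2$. The main obstacle is pinning down the correct lifted action of $\mu_{\om_1}\times\mu_{\om_2}$ on the $\eta$ coordinate so that it is a genuine product action compatible with both projections; the actions need not commute naively. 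I would first try to define the action through the embedding in $W_{21}\times W_{12}$, where it is visibly a product action, and then verify by direct substitution that it preserves the normalized variety.
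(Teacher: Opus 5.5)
Your final construction is the paper's. The paper takes $\tW_{21}$ to be the Kummer cover of $W_{21}$ obtained by adjoining $u_2$ with $u_2^{\om_2}=y_2$ (your $\eta$), and $\tW_{12}$ likewise with $u_1^{\om_1}=z_1$ (your $\theta$). Its isomorphism $s=tu_1$, $u_2=u_1^{-1}$, $y_k=u_1^{-\om_k}z_k$ is exactly your $t=s\eta$, $\theta=\eta^{-1}$, $z_k=\eta^{-\om_k}y_k$.

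The route by which you reach it is flawed, though, and you rely on that route for the group action and the equivariance. The transition equations only say that the two points have the same image in $\IA^n$. They hold identically on $\{s=0\}\times\{t=0\}$, so the subvariety of $W_{21}\times W_{12}$ they cut out contains this $(2n-2)$-dimensional set. That set is not quasi-finite over either factor, and normalization does not remove it. The genuine fibre product over $Z$ also requires equality of the $\IP^{n-1}_\om$-coordinates. Even then, the map from your Kummer cover into $W_{21}\times W_{12}$ is not injective along $s=0$ when weights share a factor: for $\om=(2,2,1)$, the points with $\pm\eta$ and $s=y_3=0$ have the same image. So once you take the Kummer cover as the definition, the ``factorwise'' action and the ``tautological'' equivariant isomorphism are no longer available. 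Restricting the product action through this map does not by itself define an action on $\tW_{21}$.

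This leaves the real content of (1) unproved: you never specify the action of $\mu_{\om_1}\times\mu_{\om_2}$ on $\tW_{21}$, and ``$\eta\mapsto\xi^{-\om_2/\ldots}$'' is not a formula. The action is forced and causes no difficulty:
\begin{itemize}
\item Requiring $\tW_{21}\to W_1$ to be $\mu_{\om_1}$-equivariant, and $\tW_{21}\to W_2$, $(s,y,\eta)\mapsto(\eta^{-\om_1},s\eta,\eta^{-\om_k}y_k)$, to be $\mu_{\om_1}$-invariant, gives $\ep_1\cdot(s,y_2,\eta,y_k)=(\ep_1 s,\ep_1^{-\om_2}y_2,\ep_1^{-1}\eta,\ep_1^{-\om_k}y_k)$.
\item $\ep_2$ acts only by the deck transformation $\eta\mapsto\ep_2\eta$.
\item Both actions are diagonal in $(s,\eta,y_3,\ldots,y_n)$, so they commute; your worry on this point is unfounded.
\item Under $\theta=\eta^{-1}$ they become $\ep_1\colon\theta\mapsto\ep_1\theta$ and $\ep_2\colon(z_1,\theta,t,z_k)\mapsto(\ep_2^{-\om_1}z_1,\ep_2^{-1}\theta,\ep_2 t,\ep_2^{-\om_k}z_k)$ on $\tW_{12}$. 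This is the paper's action, and it makes the isomorphism equivariant.
\end{itemize}
With these formulas, your two-stage quotient $\tW_{21}/\mu_{\om_2}=W_{21}$, then $W_{21}/\mu_{\om_1}=V_1\cap V_2$, is correct.

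Finally, comparing the $x_k$ alone does not show that the two maps to $Z$ agree along $s=0$, where all $x_k$ vanish. Either note that $[z_1,1,z_3,\ldots,z_n]=\eta^{-1}\cdot[1,y_2,\ldots,y_n]$ in $\IP^{n-1}_\om$, or use that $\{s\neq0\}$ is dense in the reduced variety $\tW_{21}$ and $Z$ is separated.
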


In particular, the \'etale morphisms in (2) take distinct orbits to distinct orbits. 
The collection of covering charts $W_i$ and $\tW_{ji}$ can
be completed to a finite \emph{covering altas} satisfying a natural cocycle condition (cf.\,\cite[{\S}II.1,\,Ex.\,2.12]{Hart}),
but we do not go into these details.

\begin{proof}[Proof of Lemma \ref{lem:trans}]
Say $i=1,\,j=2$. We use the notation of Remark \ref{rem:trans}. We define
\begin{align*}
\tW_{21} &:= \Spec\, \IK[s,y_2, y_2^{-1}, u_2, u_2^{-1}, y_3,\ldots, y_n] / (y_2 - u_2^{\om_2}),\\
\tW_{12} &:= \Spec\, \IK[z_1, z_1^{-1}, u_1, u_1^{-1}, t,z_3,\ldots, z_n] / (z_1 - u_1^{\om_1}).
\end{align*}
We also introduce the group actions on $\tW_{21}$ and $\tW_{12}$ given (using the generators $\ep_1$ of $\mu_{\om_1}$
and $\ep_2$ of $\mu_{\om_2}$) on $\tW_{21}$ by
\begin{align*}
\ep_1: (s, y_2, u_2, y_k) &\mapsto (\ep_1 s,\, \ep_1^{-\om_2} y_2,\, \ep_1^{-1} u_2,\, \ep_1^{-\om_k}y_k),\\
\ep_2: (s, y_2, u_2, y_k) &\mapsto (s,\, y_2,\, \ep_2 u_2,\,y_k),
\end{align*}
where $k\geq 3$, and on $\tW_{12}$ by
\begin{align*}
\ep_1: (z_1, u_1, t, z_k) &\mapsto (z_1,\, \ep_1 u_1,\, t,\, z_k),\\
\ep_2: (z_1, u_1, t, z_k) &\mapsto (\ep_2^{-\om_1}z_1,\, \ep_2^{-1} u_1,\, \ep_2 t,\, \ep_2^{-\om_k}z_k),
\end{align*}
where $k\geq 3$.

Then there is an equivariant isomorphism $\tW_{12} \to \tW_{21}$ induced by
\begin{equation*}
s = t u_1,\quad u_2 = u_1^{-1},\quad y_k = u_1^{-\om_k} z_k,\,\, k\geq 3,
\end{equation*}
and there are \'etale morphisms $\tW_{21} \to W_1$ and $\tW_{12} \to W_2$ induced by
$y_2 = u_2^{\om_2}$ and $z_1 = u_1^{\om_1}$, respectively. It is easy to see that the assertions of the lemma
are fulfilled.
\end{proof}

\begin{remark}\label{rem:gencentre}
More generally, the \emph{weighted blowing-up} of $\IA^m \times \IA^n$ with centre $\IA^m \times \{0\}$ and
weights $(\om_1,\ldots,\om_n)$ corresponding to the affine coordinates of $\IA^n$, is defined, following the definition
of the standard blowing-up, as the product of $\IA^m$ with the weighted blowing-up of $\IA^n$ as defined above.
\end{remark}

Theorem \ref{thm:orb1pintro} will involve global weighted blowings-up that are given locally (or \'etale locally) by
the affine construction above.

\subsection{Weighted blowing up of group-circulant singularities and their products}\label{subsec:wtblupcirc}
The main purpose of this subsection is to extend the weighted blowing up analysis of a circulant singularity
$\cp(k)$ in Example \ref{ex:orbcpkintro}, first to a product $\cp(k_1)\times\cdots\times\cp(k_s)$
(with a single divisor $w$), and secondly to a general (product) group-circulant singularity (Defintion \ref{def:groupcircintro}).
We recall that the latter induces a product of standard circulant singularities at every point of a codimension one stratum
(Remark \ref{rem:groupcircintro}).

With an eye towards the proof of Theorem \ref{thm:orb1pintro}, in the following examples we will compute also the invariant 
of \cite{ATW19}, which we denote $\ATWinv$ to distinguish it from the desingularization invariant $\inv$ of \cite{BMinv}, \cite{BMfunct},
used already in this article. As pointed out in \cite[\S1.5]{ATW19}, $\ATWinv$ is determined by the \emph{year-zero} $\inv$. At each point of
an embedded variety $X$ (or the cosupport of an ideal), the latter is a finite sequence $(b_1,0,b_2,0,\ldots, b_q,0,\infty)$, where $b_1$ is the order
and each $b_j$ is a positive rational number; let us write simply $\inv = (b_1,b_2,\ldots, b_q)$.

In fact, if the year-zero $\inv = (b_1,b_2,\ldots, b_q)$, then $\ATWinv = (a_1,a_2,\ldots,a_q)$, where
\begin{equation}\label{eq:invs}
a_j = b_1 b_2\cdots b_j,\quad j=1,\ldots,q;
\end{equation}
\begin{equation}\label{eq:invs1}
\text{i.e.,} \qquad b_1 = a_1\quad \text{and}\quad b_j =\frac{a_j}{a_{j-1}},\ \ j>1
\end{equation}
(cf.  \cite[Rmk.\,1.11]{BMinv}). This follows from the fact that $\inv$ and $\ATWinv$ can be defined using the same
sequences of local maximal contact hypersurfaces (given by part of a regular system of parameters, $x_1,\ldots,x_q$)
and corresponding coefficient ideals. The latter can be considered as marked ideals (each with an associated order)
which determine the successive entries of the invariant. The difference between $\inv$ and $\ATWinv$ is a certain
renormalization of the associate orders, corresponding to \eqref{eq:invs}; cf.\,\cite[\S5.1]{ATW19}. If $X$ is a hypersurface defined locally by
a function $f$, then the formal expansion of $f$ has a weighted homogeneous initial form with weights 
$(1/a_1,\dots, 1/a_q)$ associated to the parameters $(x_1,\ldots,x_q)$. (We are using the basic version of $\ATWinv$,
which does not distinguish divisorial from the remaining free parameters.)

In the language of \cite{ATW19}, \cite{Wlodar} and \cite{ABTW}, the centre of the weighted blowing-up is
given globally (on an open set where $(a_1,a_2,\ldots,a_q)$ is the maximum value of $\ATWinv$) by a \emph{Rees algebra},
which can be presented in local parameters as
\begin{equation*}
\cO_X[x_1t^{1/a_1}, x_2 t^{1/a_2},\ldots, x_q t^{1/a_q}]^{\mathrm{int}},
\end{equation*}
where $\mathrm{int}$ means integral closure in $\cO_X[t^{1/a}]$, with $a =$	 smallest positive rational number 
which is an integral multiple of $a_j,\, j=1,\ldots,q$.
Moreover, the weighted blowing-up can be described, following \cite{Wlodar}, by a \emph{cobordant blowing-up}
\begin{equation*}
B = \Spec_X \cO_X[t^{-1}, x_1t^{1/a_1}, x_2 t^{1/a_2},\ldots, x_q t^{1/a_q}]
\end{equation*}
(cf. Definition \ref{def:wtblup}).
We will not need to work explicitly with these notions, except for the fact that their existence implies that the weighted
blowings-up of group-circulant singularities that we use make sense as global weighted blowings-up, and
induce explicit birational morphisms of the orbifold quotients.

The fact that $\ATWinv$ is determined by $\inv$ may be used to simplify computations because $\inv$ and the maximal
contact hypersurfaces depend only on the equivalence class of a marked ideal (in the sense of \cite{BMfunct}). This is
useful (though not obligatory) in the examples following. We begin by adding the details above to Example \ref{ex:orbcpkintro}
on $\cp(k)$.

\begin{example}\label{ex:orbcpkintrobis}
We consider the cyclic \emph{circulant singularity} $\cp(k)$ defined by $f(w,x) = \De_k(x_0,w^{1/k}x_1,\ldots,w^{(k-1)/k}x_{k-1})$,
which we write in short form as $\De_k(w^{j/k}x_j)$. See Example \ref{ex:orbcpkintro}. We claim that, at any point where
$w=x=0$ (there may be additional parameters),
\begin{equation}\label{eq:invcpk}
\inv = \left(k,\,\frac{k+1}{k},\, 1,\, \frac{k}{k-1},\, \frac{k-1}{k-2},\,\ldots,\,\frac{3}{2}\right),
\end{equation}
with successive maximal contact hypersurfaces given by $x_0, w, x_1\, (\text{or } x_1, w), x_2,\ldots,x_{k-1}$.

To see this, we can use the fact that the marked ideal $(f,k)$ is equivalent to the monomial marked ideal
\begin{equation}\label{eq:monideal}
\left((x_0^k,\,wx_1^k,\,\ldots,\,w^{k-1}x_{k-1}^k),\, k\right).
\end{equation}
The equivalence is a consequence of the fact that the collection of $k$ factors of $\De_k(w^{j/k}x_j)$ (see \eqref{eq:circfact})
is an invertible linear combination of $w^{j/k}x_j$, $j=0,\ldots,k-1$ (see also \cite[Ex.\,A.13]{BMmin1}).

The calculation then goes as follows. We begin with maximal contact $x_0$ and the order $k$ as the first entry $b_1$ of $\inv$.
The first coefficient marked ideal is $((w^jx_j^k,\, j=1,\ldots,k-1),\,k)$, and the second entry $b_2$ of $\inv$ is $\ord(w^j x_j^k)/k 
= (k+1)/k$. We continue with the entries $b_j$ of $\inv$, maximal contact hypersurfaces and coefficient ideals as follows.
\begin{equation*}
\begin{aligned}
&b_2 = \frac{k+1}{k},\quad  &w,\qquad &(x_1^k,\,k)\,+\,(x_2^k,\,k-1)\, +\cdots +\,(x_{k-1}^k,\,2),\\
&b_3 = 1,\quad  &\ x_1,\qquad &(x_2^k,\,k-1)\, +\cdots +\,(x_{k-1}^k,\,2)\\
&              &        &= ((x_2^{k/(k-1)},\,x_3^{k/(k-2)},\ldots,x_{k-1}^{k/2}),\, 1),\\
&b_4 = \frac{k}{k-1},\quad   &x_2,\qquad &(x_3^{k/(k-2)},\ldots, x_{k-1}^{k/2},\, k/(k-1))\\
&              &        &= (x_3^{(k-1)/(k-2)},\ldots, x_{k-1}^{(k-1)/2},\, 1),
\end{aligned}
\end{equation*}
etc. For more details of how to make the computation, see the \emph{Crash course on the desingularizaion invariant},
\cite[Appendix]{BMmin1}. One could also make the computation (with a little more manipulation) directly from derivatives
of $f$, without passing to the equivalent monomial marked ideal.

It follows from \eqref{eq:invcpk} that
\begin{equation}\label{eq:ATWinvcpk}
\ATWinv = \left(k,\, k+1,\, k+1,\, \frac{k(k+1)}{k-1},\, \frac{k(k+1)}{k-2},\,\ldots,\, \frac{k(k+1)}{2}\right), 
\end{equation}
and the weight vector associated to the parameters
\begin{equation*}
(x_0,w,x_1,x_2,\ldots,x_{k-1})
\end{equation*}
is
\begin{equation*}
\frac{1}{k(k+1)}\left(k+1,\,k,\,k,\,k-1,\,k-2,\,\ldots,\,2\right).
\end{equation*}
This is the weight vector of Example \ref{ex:orbcpkintro}, scaled so that the total weighted degree of $\De_k$ is $1$.
\end{example}

\begin{example}\label{ex:orbprodcpk} Consider the \emph{product of cyclic circulant singularities}
$\cp(k_1)\times\cdots\times\cp(k_s)$,
where $\sum k_i = k$ (with a single divisor $w$), given by
\begin{equation*}
f(w,x) = \prod_{i=1}^s \De_{k_i}(w^{j/k_i}x_{ij},\,j=0,\ldots,k_i-1)
\end{equation*}
(using the short-hand notation of Example \ref{ex:orbcpkintro}),
in the closure $S = \{x_{ij} = 0: j=0,\ldots,k_i-1, i=1,\ldots,s\}$ of the $\nc(k)$-locus.
Suppose $k_1 = \max k_i$.
We first claim that, at a point where $w = x = 0$, 
\begin{equation}\label{eq:ATWinv}
\ATWinv = \left(k,\ldots,k,\, \frac{k(k_1+1)}{k_1},\, \left\{\frac{kk_i(k_1+1)}{k_1(k_i -j) + k_i}:\, j=1,\ldots,k_i-1,\, i=1,\ldots s\right\}\right),
\end{equation}
where the terms in the set should be written in ascending order, with a corresponding sequence of maximal contact hypersurfaces
given by 
\begin{equation*}
x_{i0},\, i=1,\ldots, s,\ \ w,\ \{x_{ij}:\, j=1,\ldots,k_i-1,\, i=1,\ldots s\},
\end{equation*}
in the same order as above.

The computation is similar to that of Example \ref{ex:orbcpkintrobis}. The marked ideal $(f,k)$ is equivalent to the 
monomial marked ideal
\begin{equation*}
\sum_{i=1}^k \left((w^j x_{ij}^{k_i}),\, k_i\right) \cong \left((w^{j/k_i} x_{ij}),\, 1\right)
\end{equation*}
The first $s$ terms of $\inv$ are $k,1,\ldots,1$, corresponding to maximal contacts $x_{10},\ldots,x_{s0}$,
and the corresponding coefficient ideal is
\begin{equation*}
\sum_{i=1}^k \left((w^j x_{ij}^{k_i})_{j\geq 1},\, k_i\right) \cong \left((w^{j/k_i} x_{ij})_{j\geq 1},\, 1\right)
\end{equation*}

Since $k_1 = \max k_i$, the next term of $\inv$ is $(k_1+1)/k_1$, and we can take maximal contact $w$. 
At this point, we have $(k,\ldots,k,\, k(k_1+1)/k_1,\ldots)$ in $\ATWinv$. The
next coefficient ideal is
\begin{equation*}
\sum_{i=1}^s\sum_{j=1}^{k_i-1}\left(x_{ij},\ \frac{k_1+1}{k_1} - \frac{j}{k_i} = \frac{k_1(k_i-j) + k_i}{k_1k_i}\right)
\cong \sum \left( x_{ij}^{\frac{k_1k_i}{k_1(k_i-j) + k_i}},\ 1\right). 
\end{equation*}
The next term of $\inv$ is the smallest value of
\begin{equation*}
\frac{k_1k_i}{k_1(k_i-j) + k_i},\quad j=1,\ldots,k_i-1,\ i=1,\ldots,s,
\end{equation*}
giving the smallest value of
\begin{equation*}
\frac{k(k_1+1)}{k_1}\frac{k_1k_i}{k_1(k_i-j) + k_i} = \frac{kk_i(k_1+1)}{k_1(k_i-j)+k_i},\quad j=1,\ldots,k_i-1,\ i=1,\ldots,s,
\end{equation*}
for the next term of $\ATWinv$.
At this step, the smallest value for $\inv$ is, in fact, $1$, with corresponding maximal
contacts $x_{i1}$, taken for all $i$ such that $k_i = k_1$. We continue in the same way to get \eqref{eq:ATWinv}.

Note that \eqref{eq:ATWinv} is given by the set of positive rational numbers
\begin{equation*}
\frac{k(k_1+1)}{k_1},\quad \frac{kk_i(k_1+1)}{k_1(k_i -j) + k_i},\,\, j=0,\ldots,k_i-1,\, i=1,\ldots s,
\end{equation*}
arranged in ascending order, with corresponding maximal contacts $w, x_{ij}$, in the same order. The corresponding weights,
associated to the parameters $w, x_{ij}$, are the reciprocals, given by $1/k(k_1+1)\,\times$
\begin{equation}\label{eq:prodwts}
k_1,\quad \frac{k_1(k_i -j) + k_i}{k_i}.
\end{equation}
We can multiply the weights \eqref{eq:prodwts} by $\ell/k_1$, where $\ell = \lcm\{k_i\}$, to get integer weights
\begin{equation}\label{eq:prodwts1}
\ell,\quad \ell - \left(j\frac{\ell}{k_i} - \frac{\ell}{k_1}\right)
\end{equation}
for the parameters $w, x_{ij}$, generalizing those of the $cp(k)$ case
in Example \ref{ex:orbcpkintro}. For example, if all $k_i = k_1$, then \eqref{eq:prodwts1} are the same weights as for $\cp(k_1)$.

\smallskip
Let us now analyze the weighted blowing-up $\s$ of $\cp(k_1)\times\cdots\times\cp(k_s)$, with weights \eqref{eq:prodwts1}
associated to the parameters $w, x_{ij}$, following the pattern of Example \ref{ex:orbcpkintro}.

In the orbifold $w$-chart, $\s$ is given by the substitution
\begin{align*}
w &= t^\ell,\\
x_{ij} &= t^{\ell - \left(j\frac{\ell}{k_i} - \frac{\ell}{k_1}\right)}\dx_{ij,}\quad j=0,\ldots,k_i-1,\, i=1,\ldots s, 
\end{align*}
and the group $\mu_\ell$ acts on the chart by
\begin{equation*}
\ep\cdot (t, \dx_{ij}) := (\ep t, \ep^{j\frac{\ell}{k_i} - \frac{\ell}{k_1}}\dx_{ij}), \quad \text{where } \ep \in \mu_\ell.
\end{equation*}
The group action is free outside the exceptional divisor $\{t=0\}$.

Likewise, the orbifold $x_j$-chart has an action of $\mu_{k-j+1}$, $j=0,\ldots,k-1$.

The strict transform by $\s$ of the stratum $S = \{x=0\}$ intersects only
the $w$-chart (as an invariant smooth subvariety), and
the pullback of $\cp(k_1)\times\cdots\times\cp(k_s)$ is given in this chart by 
\begin{equation*}
t^{k(\ell +\ell/k_1)}\prod\De_{k_i}(\dx_{i0},\ldots,\dx_{i,k_i-1}), 
\end{equation*}
which is normal crossings.

As in Example \ref{ex:orbcpkintro}, we obtain a Hilbert basis
\begin{align*}
W &= t^\ell,\\
X_{i0} &= t^{\ell/k_1}\dx_{i0},\quad 1\leq i\leq s,\\
X_{i1} &= \dx_{i1},\quad \text{if }k_i =k_1,\\
X_{ij} &= t^{\ell - \left(j\frac{\ell}{k_i} - \frac{\ell}{k_1}\right)}\dx_{ij}, \quad \text{otherwise},\\
S_{\mu,\la} = S_{\mu,(\la_{ij})} &= t^\mu \prod_{i=1}^s\prod_{j=0}^{k_i-1} \dx_{ij}^{\la_{ij}},
\end{align*}
where $\mu$ and the $\la_{ij}$ are nonnegative integers such that $\la_{i1} = 0$ if $k_i = k_1$ and
\begin{equation*}
\mu + \sum_{i=1}^s\la_{i0}\left(\ell - \frac{\ell}{k_1}\right) + \sum_{i=1}^s\sum_{j=1}^{k_i-1}\la_{ij}\left( j\frac{\ell}{k_i} - \frac{\ell}{k_1} \right) = \nu \ell, \quad \nu = \nu_{\mu,\la} \geq 1.
\end{equation*}
(The latter invariants include those preceding.)
Note that $\displaystyle{0 < \left( j\frac{\ell}{k_i} - \frac{\ell}{k_1} \right) < \ell}$, if $j=1$ and $k_i < k_1$, or $j>1$.

The relations among the invariants include
\begin{equation*}
\begin{aligned}
\prod_{i,j} X_{ij}^{\la_{ij}} = &= t^{\sum_i\la_{i0}\frac{\ell}{k_1} + \sum_i\sum_{j\geq 1}\la_{ij}\left(\ell - \left(j\frac{\ell}{k_i} - \frac{\ell}{k_1}\right)\right)}\prod_{i,j} \dx_{ij}^{\la_{ij}}\\
&= t^{\sum_i \la_{i0}\ell + \sum_i\sum_{j\geq 1}\la_{ij} \ell - \left(\mu + \sum_i \la_{i0}\left(\ell - \frac{\ell}{k_1}\right) 
+ \sum_i\sum_{j\geq 1}\la_{ij}\left(j\frac{\ell}{k_i} - \frac{\ell}{k_1}\right)\right)} t^\mu \prod_{i,j} \dx_{ij}^{\la_{ij}}\\
&= W^{\sum \la_{ij} - \nu} S_{\mu,\la}.
\end{aligned}
\end{equation*}

Exactly as in Example \ref{ex:orbcpkintro}, the image of $\left\{\prod\De_{k_i}(\dx_{i0},\ldots,\dx_{i,k_i-1}) = 0\right\}$
by the quotient morphism (given by the Hilbert basis) is just $\cp(k_1)\times\cdots\times\cp(k_s)$, so that the quotient
variety is the intersection of the product circulant hypersurface with the toric variety defined by the ideal of relations.
We call this quotient an \emph{orbifold product circulant singularity}. 

The induced morphism from the quotient is given by $w=W,\, x_{i0} = WX_{i0},\, x_{i1} = WX_{i1} \text{ if } k_i = k_1$,
and $x_{ij} = X_{ij}$ for all other $i,j$.
The analogue of Remark \ref{rem:orbcpkintro} then also holds exactly in the same way as the latter. In particular, the preceding
toric variety is resolved by a single blowing-up, after which we recover the $\cp(k_1)\times\cdots\times\cp(k_s)$ singularity.

Note that the cyclic groups acting on the various orbifold charts have order bounded by $\ell +1$.
\end{example}

If there are several divisors $w_1,\ldots,w_r$, we have to repeat the above construction for $i=1,\ldots,r$,
as in the general group-circulant case following.

\begin{example}\label{ex:orbgencirc}
\emph{Group-circulant singularity} $\De_\Ga$ (Definition \ref{def:groupcircintro}).
The group $\Ga = G/H$, as in Definition \ref{def:groupcircintro}, where
$G = \IZ_{p_1}\times \cdots \times \IZ_{p_r}$, corresponding to parameters $w = (w_1,\ldots,w_r)$ representing
exceptional divisors. Each $p_i | k$, where $k = |G/H|$. On a stratum 
\begin{equation*}
T_i = \{x=w_i=0,\, w_h \neq 0, h\neq i\}
\end{equation*}
of codimension $1$ in $S = \{x=0\}$, $\De_{G/H}$ induces standard product circulant normal form
$\cp(p_i)\times\cdots\times\cp(p_i)$ (with $k/p_i$ factors).

We perform $r$ weighted blowings-up with successive centres $S \cap \{w_i = 0\} = \{w_i \allowbreak = x = 0\}$, $i=1,\ldots,r$
(where the $w_i$ can be ordered according to history), and where the parameters $w_i,\,x_j$, for fixed $i$, have associated
weights given by Example \ref{ex:orbprodcpk} for $\cp(p_i)\times\cdots\times\cp(p_i)$. 
Each such centre intersects only the $w_1\cdots w_{i-1}$-chart. 
Then $S$ lifts to the $w_1\cdots w_r$-chart of the composite of weighted blowings-up,
with group $\mu_{p_1}\times\cdots\times \mu_{p_r}$. The action of the group is free outside $\{y_1\cdots y_r = 0\}$,
and the given group-circulant singularity has strict transform $\nc(k)$ in the $w_1\cdots w_r$-chart.

Note that the desingularization algorithms of \cite{BMfunct}, \cite{ATW19}, \cite{Wlodar} are functorial with respect
to field extension, and each of the weighted blowings-up above are given by Example \ref{ex:orbprodcpk}
for $\cp(p_i) \times \cp(p_i)$, over the field $\overline{\IK(w_{i+1},\ldots,w_r)}$. For example, for $i=1$, extension of the
field $\IK$ by $\overline{\IK(w_2,\ldots,w_r)}$ corresponds to the fibre product $\times_{\Spec\,\IK}\, \Spec\,\overline{\IK(w_2,\ldots,w_r)}$,
by which the group-circulant singularity at the given point becomes $\cp(p_i)\times\cdots\times\cp(p_i)$, and $\inv$
(or $\ATWinv$) becomes the invariant for $\cp(p_i)\times\cdots\times\cp(p_i)$.

The successive weighted blowings-up are nontrivial only over the $w_1\cdots w_i$-charts, $i=1,\ldots,r-1$.
The group acting on the $w_1\cdots w_ix_{ij}$-chart is $\mu_{p_1}\times \cdots \times \mu_{p_i} \times \mu_{p_{i+1} - j+1}$.
Following the pattern of Examples \ref{ex:orbcpkintro} and \ref{ex:orbprodcpk}, we can write a Hilbert basis and relations
for the action of $\mu_{p_1}\times \cdots \times \mu_{p_r}$ on the $w_1\cdots w_r$-chart. Then,
following Remark \ref{rem:orbcpkintro}, we can also recover the original group-circulant hypersurface from the 
orbifold quotient, by blowing up the codimension 1 strata one at a time.
The final orbifold groups have order bounded by $p_1 +\cdots + p_r +1
\leq k(n-k+1)+1$ (since $r \leq n-k+1$).

The more general case of a product group-circulant singularity can be handled exactly in the same way.
\end{example}

\subsection{Partial desingularization preserving normal crossings}\label{subsec:pardes} 
We are finally in a position to complete the proof of the Partial Desingularization Theorem \ref{thm:orb1pintro}.

The proof is by induction on the integer
$p$ in the theorem, and provides a sequence of weighted blowings-up. The only blowings-up with non-trivial
weights that are needed are those for product group-circulant singularities, as described in Example \ref{ex:orbgencirc}. 
A standard (classical) blowing-up is a weighted blowing-up with trivial weights (all weights $=1$). The successive
(weighted) blowings-up (needed, for example, in the inductive step) are defined in the orbifold covering charts,
and we have to show that these blowings-up make sense globally; in particular, that they induce birational (or
bimeromorphic) morphisms of the orbifold quotient varieties.

For this purpose, it is very convenient to use the Abramovich-Temkin-W{\l}odarczyk construction of weighted blowings-up
\cite{ATW19} and W{\l}odarczyk's formulation in terms of cobordant blowings-up \cite{Wlodar}. In this construction, $\ATWinv$
is constant on the centre of a weighted blowing-up, which can be defined globally by a Rees algebra.

For an embedded variety $X\hookrightarrow Z$, where $Z$ is smooth, a sequence of cobordant blowings-up is a sequence
of morphisms of varieties $Z = Z_0 \leftarrow Z_1 \leftarrow \cdots \leftarrow Z_t = Z'$, where each $Z_j$ has an action of 
a torus $T_j = G_m^j$ with finite stabilizer subgroups and geometric quotient (i.e., the fibres are single orbits). The centres
of the successive blowings-up are $T_j$-invariant, and there is an induced sequence of birational morphism
$Z_{j-1}/T_{j-1} \leftarrow Z_j/T_j$ commuting with the quotient morphisms. The construction is functorial with respect
any group action on $X\hookrightarrow Z$.

The successive cobordant blowings-up induce weighted blowings-up of the orbifold covering charts (Definition \ref{def:wtblup}
and \eqref{eq:wtblupmodel} is a model), and the quotients $Z_j/T_j$ are covered by (\'etale morphisms
from) the orbifold quotient charts; cf.\,\cite[{\S}A.6]{ABTW}, \cite[\S3.4]{ATW19}.

\begin{proof}[Proof of Theorem \ref{thm:orb1pintro}]
We begin by applying Theorem \ref{thm:ordpintro} to obtain a morphism $\s_p: X_p \to X$ given by a finite sequence
of admissible blowings-up (equivariant with respect to an
action of a group $G$), preserving the locus $X^{\nc(p)}$ of normal crossings points of order $\leq p$, such that
$X_p$ is a hypersurface and has maximum order $\leq p$, the subset $S_p$ of points of order $p$ of $X_p$ is smooth
and of dimension $n-p+1$ (unless empty), where $n = \dim X$, and $X_p$ has only (product) group-circulant singularities
of order $p$ at every point of $S_p$ (in particular, $X_p$ is generically $\nc(p)$ on $S_p$).

We can assume, moreover, that $X_p \subset Z_p$ is an embedded hypersurface ($Z_p$ smooth), and
(as in Theorem \ref{thm:ordpintro})
that the parameters $w_i$ appearing in the formula
for the product group-circulant singularity at any non-$\nc(p)$ point of $S_p$ (Definition \ref{def:groupcircintro}) each
correspond to some component $D_j = D_{j(i)}$ of the exceptional divisor (which is a simple
normal crossings divisor in $Z_p$, transverse to $S_p$). Let $D$ denote the sum of these components $D_j$
(say, $j = 1,\ldots,q$); $X_p$ is $\nc(p)$ on $S_p\backslash D$.

We claim that the weighted blowings-up of the (product) group circulant singularities in $S_p$, as given in
Example \ref{ex:orbgencirc}, make sense as a sequence of global weighted blowings-up with centres which have 
support $S_p \cap D_j$, $j=1,\ldots,q$ (where the $D_j$ can be ordered according to history), after which $\tX_p$ 
has only normal crossings in a deleted neighbourhood of $\tS_p$ (notation of Theorem \ref{thm:orb1pintro}).
This claim will be formalized as Lemma \ref{lem:globalcirc} following.

After applying the lemma, $\tX_p$  has only normal crossings in a deleted neighbourhood of $\tS_p$. We can then
apply the induction hypothesis in the complement of $\tS_p$; the centres of blowing up will be isolated from 
$\tS_p$ (and therefore have closed supports) because $\tX_p$ is already normal crossings of order $<p$ in a deleted
neighbourhood of $\tS_p$. The theorem follows.
\end{proof}

As an example which is pertinent to the proof of Lemma \ref{lem:globalcirc}, let us consider the case that $X_p$ has a
$\cp(k_1) \times\cdots\times \cp(k_s)$ singularity at a point $a\in S_p$ (where $k_1 +\cdots +k_s = p$); cf. Example
\ref{ex:orbprodcpk}. Recall that, in the latter, the set of product circulant points is given by the maximum
locus of $\ATWinv$ (where $\ATWinv$ takes the value \eqref{eq:ATWinv}). Since $\ATWinv$ and the 
desingularization algorithms are functorial with respect to \'etale morphisms, the maximum stratum of $\ATWinv$
in $X_p$, in a neighbourhood of $a$, lies in $S_p$ and corresponds to that in Example \ref{ex:orbprodcpk}.

The weighted blowing-up corresponding to the maximum stratum of $\ATWinv$ is described locally
by parameters that define a sequence of maximal contact hypersurfaces, together with associated weights coming from
$\ATWinv$. If we choose such parameters $(v, y_{ij})$ at $a$, appropriately ordered (cf. Example \ref{ex:orbprodcpk}, where
$v$ is a local generator of the ideal of the exceptional divisor, corresponding to $w$ in the example),
then the weighted blowing-up induces that of Example \ref{ex:orbprodcpk} in an appropriate \'etale neighbourhood of $a$.

\smallskip
In general, consider an snc divisor $E$ on a smooth variety $Z$, with components $E_1,\ldots,E_t$, say. Let $E_1,\ldots,E_s$
denote the components of $E$ at a point $a\in Z$, and let $(v_1,\ldots,v_s,\, z_1, z_2,\ldots)$ denote a system of parameters
for $Z$ at $a$, where each $v_j$ is a local generator of the ideal of $E_j$. We call $v_1,\ldots,v_s$ \emph{divisorial parameters},
and the remaining coordinates $z_1,z_2,\ldots$ \emph{free parameters}. If $v_1',\ldots,v_s'$ is another
set of local generators of the ideals of $D_1,\ldots,D_s$ (respectively), then each $v_j'$ is a unit times $v_j$, and the change
of parameters induces an isomorphism of the fields of fractions $\IK(v) = \IK(v_1,\ldots,v_s)$ and $\IK(v')$. The free coordinates
induce regular parameters at $a$ in $Z\times_{\Spec\,\IK} \Spec\,\overline{\IK(v)}$, where $\overline{\IK(v)}$ denotes an 
algebraic closure of $\IK(v)$.

The field of fractions $\IK(v)$ is isomorphic to the field of fractions of the ring of polynomials in local generators at $a$ of the ideals
of all components $E_1,\ldots,E_t$ of $E$, so we will also write $\IK(E) = \IK(E_1,\ldots,E_t)$ for the isomorphism class of $\IK(v)$.

\begin{lemma}\label{lem:globalcirc}
We use the notation of the proof of Theorem \ref{thm:orb1pintro} above, so that $X_p$ has a given product circulant singularity
$\cp(k_1^i)\times\cdots\cp(k_{s_i}^i)$, where $\sum_\ell k_\ell^i = p$, at every point of a codimension one stratum
$S_p \cap \left(D_i \backslash \cup_{j\neq i} D_j\right)$. For each $i=1,\ldots,q$, let $D^{(i)}$ denote the snc divisor given
by all components of $D$ \emph{except} $D_i$. Then $X_p$ is transformed to a variety that is $\nc(p)$ on (the strict fransform of) $S_p$,
by a sequence of $q$ weighted blowings-up whose successive weighted centres have support $S_p \cap D_i$, and correspond
to the maximum value of $\ATWinv$ on $S_p$ in $X_p \times_{\Spec\,\IK} \Spec\,\overline{\IK(D^{(i)})}$. (Note that this maximum value
is the value of $\ATWinv$ for $\cp(k_1^i)\times\cdots\cp(k_{s_i}^i)$. The $D_i$ can be ordered according to history.)
\end{lemma}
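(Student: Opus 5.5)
We proceed by induction on $i=1,\ldots,q$, performing at step $i$ the weighted blowing-up $\s_i$ attached to the maximum value of $\ATWinv$ on the strict transform of $S_p$ in $X_p\times_{\Spec\,\IK}\Spec\,\overline{\IK(D^{(i)})}$. The first task is to show that this prescription defines a global weighted centre on $Z_p$ (or on the current cobordant blow-up) and not merely a local one. The invariant $\ATWinv$ is functorial with respect to \'etale morphisms and to field extension. By Theorem \ref{thm:codim1} (see also Remark \ref{rem:groupcircintro}), after base change to $\overline{\IK(D^{(i)})}$ the generic point of $S_p\cap D_i$ carries the product circulant singularity $\cp(k^i_1)\times\cdots\times\cp(k^i_{s_i})$. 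By Example \ref{ex:orbprodcpk}, the value \eqref{eq:ATWinv} of $\ATWinv$ is therefore constant along $S_p\cap D_i$, and no other point of $S_p$ has a larger value. Indeed, the remaining points of $S_p$ are either $\nc(p)$ points or lie on $D_i\cap D_j$ with $j\ne i$. In the second case, the $w_j$ with $j\neq i$ become constants of the extended field, so the local form reduces to the same codimension-one form.

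The \cite{ATW19}/\cite{Wlodar} construction then provides a Rees algebra $\cO[v_it^{1/a_1},\ y_{\cdot}t^{1/a_\cdot},\ldots]^{\mathrm{int}}$ defined over the open set where this maximum is attained. Its cobordant blowing-up gives a torus-equivariant morphism whose quotient is a birational morphism of varieties, and it is equivariant for any group action. Locally, in the parameters $(v_i,y_{\ell j})$ of successive maximal contact, it induces exactly the weighted blowing-up of Example \ref{ex:orbprodcpk} with weights \eqref{eq:prodwts1}.

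Next we track what happens to the normal form. At a point of $S_p\cap D_i\cap\bigcap_{j\in J}D_j$ the singularity is $\De_\Ga$ with parameters $w_i$ and $w_j$, $j\in J$. In the orbifold $w_i$-chart, which is the only chart met by the strict transform of $S_p$, we substitute $w_i=t^{\ell}$ and $x\mapsto t^{\om}\dx$. The exponent of $w_i$ in each $X_{\ell_j}$ is then absorbed, as in Example \ref{ex:orbgencirc}, leaving a product group-circulant form in the remaining divisorial parameters $w_j$, $j\ne i$, with $t$ entering only as a monomial factor. Consequently, after all $q$ steps every fractional exponent has been removed, the strict transform in the $w_1\cdots w_q$-chart is $\De_\Ga$ evaluated on independent linear coordinates, and this is $\nc(p)$ by \eqref{eq:eigengen}. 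The centres at the different steps have the correct supports, since the strict transforms of the $S_p\cap D_j$ for $j>i$ remain in the previous charts. The ordering by history ensures that every centre is snc with the accumulated exceptional divisor.

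The main obstacle is to verify that, after step $i$, the transformed singularity at points of the strict transform of $S_p\cap D_{i+1}$ is again group-circulant, with the \emph{same} codimension-one data $\cp(k^{i+1}_\cdot)$ along $D_{i+1}$. This is what makes the maximum of $\ATWinv$ over $\overline{\IK(D^{(i+1)})}$ equal to the asserted value, and it must also hold in the orbifold charts, where the field $\IK(D^{(i+1)})$ now involves $t$ in place of $w_i$. To settle it, we would check in the explicit substitution of Example \ref{ex:orbgencirc} that the monomials $w^{\ga_j}$ restricted to the $w_{i+1}$-direction are unchanged, and then apply Theorem \ref{thm:codim1} again.
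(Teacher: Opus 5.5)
Your proposal follows essentially the paper's route: treat $v_i$ as a free parameter and extend scalars by the other divisorial parameters, so that $S_p\cap D_i$ carries the codimension-one product circulant value of $\ATWinv$; take the corresponding \cite{ATW19}/\cite{Wlodar} weighted centre (locally that of Example \ref{ex:orbgencirc}); and repeat for $i=2,\ldots,q$. The paper covers the step you flag as the main obstacle only with ``continue in the same way'', and your own chart computation essentially settles it, since the exponents of $w_{i+1}$ are untouched in the $w_i$-chart.

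The one point the paper states more sharply is why the centre lives on $Z_p$ and not only after extension to $\overline{\IK(D^{(i)})}$. The successive maximal contact hypersurfaces for this value of $\ATWinv$ are obtained by derivatives in the free parameters (with $v_i$ counted as free), and these derivatives are induced by those on $Z_p$. Functoriality under field extension does not by itself give this descent, in particular at points of $D_i\cap D_j$, so you should invoke this observation rather than functoriality alone.
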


\begin{proof}
Consider $S_p \cap D_i$, for some $i$. At a point $a \in S_p \cap D_i$ (outside the corresponding codimension one stratum),
we consider a local generator $v_i$ of the ideal of $D_i$, together with free parameters for $Z_p$ at $a$, all as free parameters
(so that only local generators of the ideals of $D_j$, $j\neq i$, are divisorial).

Say $i=1$. Suppose $D_1,\ldots,D_s$ are the components of $D$ at $a$. After extension by the field $\overline{\IK(v_2,\ldots,v_s)}$,
$X_p$ has a $\cp(k_1^1)\times\cdots\cp(k_{s_1}^1)$ singularity at $a$, and successive maximal contact hypersurfaces at $a$
for the corresponding value of $\ATWinv$ are obtained by appropriate derivatives in the free parameters, which are induced by
those for $Z_p$. 

If $a$ is a (product) group-circulant point as in Example \ref{ex:orbgencirc} , then extension by the field $\overline{\IK(v_2,\ldots,v_s)}$
pulls back to extension by $\overline{\IK(w_2,\ldots,w_s)}$ in the example.
We thus obtain a global weighted centre of blowing up in $X_p$, corresponding to the weighted blowing-up
in Example \ref{ex:orbgencirc} for $i=1$.

We continue in the same way, for $i=2,\ldots,q$.
\end{proof}

\begin{proof}[Proof of Addendum \ref{add:orb1pintro}]
The additional blowings-up needed for the orbifold group-circulant singularities in $S_p$ have centres each given by
the intersection of $S_p$ with a component of the distinguished divisor (where the components are ordered
by history, for example). This is a consequence of Remark \ref{rem:orbcpkintro} and the analogous statements, in 
general (see Examples \ref{ex:orbprodcpk} and \ref{ex:orbgencirc}).

For $S_{p-1}$, the analogous blowings-up have centres in the complement of $S_p$, but these centres are 
closed in $\tX_p$ because $\tX_p$ has only normal crossings singularities in a deleted neighbourhood of $\tS_p$.
Likewise, for all $q<p$.
\end{proof}

\section{Quotient of normal crossings by a finite abelian group action}\label{sec:orb}
The purpose of this final section is to show that the singularities which appear in the Partial Desingularization 
Theorem \ref{thm:orb1pintro} in addition to neighbours of group-circulant singularities
can be described explicitly by formulas analogous to group-circulant for quotients by a finite abelian group action of an 
invariant normal-crossings
ideal; see Example \ref{ex:invtnc} and Theorem \ref{thm:invtnc}.

\subsection{Action of a finite abelian group}\label{subsec:gpaction}
Given a multiplicative cyclic group $\mu_p$, we write $\ep_p$ for the generator $e^{2\pi i/p}$ of $\mu_p$.
Any finite abelian group $G$ is isomorphic to a product $\mu_{p_1} \times \cdots \times \mu_{p_r}$ of cyclic groups.
If $G = \mu_{p_1} \times \cdots \times \mu_{p_r}$, let $\mu_{p_i}^{(i)}$ denote the subgroup 
$\{1\} \times \cdots \times \mu_{p_i} \times \cdots \times \{1\}$, and let $\ep_{p_i}^{(i)}$ denote the generator of $\mu_{p_i}^{(i)}$
corresponding to $\ep_{p_i}$.

We recall that any linear representation of a finite abelian group $G$, over an algebraically closed field, is diagonalizable; 
equivalently, every irreducible linear representation of $G$ has dimension $1$.

Let $Z$ denote a smooth variety over an algebraically closed field (or a smooth complex analytic variety), and suppose that
$G = \mu_{p_1} \times \cdots \times \mu_{p_r}$ acts on $Z$. Let $a \in Z$ denote a fixed point
of the action of $G$. Then $G$ also acts as a group of automorphisms of the local ring $\cO_{Z,a}$. It follows that there is a system
of regular (or analytic) coordinates $x = (x_1,\ldots,x_n)$ at $a=0$ with respect to which the action of $G$ is diagonal; i.e.,
\begin{equation}\label{eq:diagcoords}
\ep_{p_i}^{(i)}\cdot x_j = \ep_{p_i}^{\ga_{ij}}x_j,\quad \text{where }\ \ 0\leq \ga_{ij} < p_i,
\end{equation}
for every $i=1,\ldots,r$ and $j=1,\ldots, n$. (In the algebraic case, there is an equivariant \'etale morphism from an affine 
neighbourhood of $a$ to the tangent space of $Z$ at $a$, by \cite[Lemme fondamental]{Luna}; see also \cite [Sect.\,5]{BKS}.)

We say that $f\in \cO_{Z,a}$ is \emph{$G$-semi-invariant} if 
\begin{equation}\label{eq:semiinv}
g\cdot f = \prod_{i=1}^r \ep_{p_i}^{\ga_i(g,f)} f,
\end{equation}
for every $g\in G$. (Given $g\in G$, we say also that $f$ is \emph{$g$-semi-invariant} if it satisfies \eqref{eq:semiinv}.)
If an ideal $I \subset \cO_{Z,a}$ has a system of $G$-semi-invariant generators, then
$I$ is $G$-invariant; i.e., $g\cdot I = I$, for every $g\in G$.

\begin{lemma}\label{lem:semiinvgens}
Suppose $G = \mu_{p_1} \times \cdots \times \mu_{p_r}$ acts on $Z$, and $a\in Z$ is a fixed point.
Let $I \subset \cO_{Z,a}$ denote a $G$-invariant ideal. Then $I$ has a system of $G$-semi-invariant generators.
\end{lemma}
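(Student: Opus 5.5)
We will decompose generators of $I$ into isotypic components using the Reynolds-type projection operators of the finite abelian group $G$, and then use Nakayama's lemma to reduce to finitely many of them. Let $G^\vee := \Hom(G,\IK^*)$ be the character group. Since $G = \mu_{p_1}\times\cdots\times\mu_{p_r}$, its characters are exactly $\chi_\ga(g) = \prod_i \ep_{p_i}^{\ga_i c_i}$, where $g = ((\ep_{p_1})^{c_1},\ldots,(\ep_{p_r})^{c_r})$ and $\ga=(\ga_1,\ldots,\ga_r)$ with $0\leq \ga_i<p_i$. So $f$ is $G$-semi-invariant in the sense of \eqref{eq:semiinv} exactly when $g\cdot f = \chi(g) f$ for some character $\chi$.

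For each $\chi\in G^\vee$, define the operator
\begin{equation*}
\pi_\chi(f) := \frac{1}{|G|}\sum_{g\in G} \chi(g)^{-1}\, g\cdot f, \qquad f\in \cO_{Z,a}.
\end{equation*}
This uses characteristic zero, so that $|G|$ is invertible. A direct computation, re-indexing the sum by $g\mapsto hg$, gives $h\cdot \pi_\chi(f) = \chi(h)\pi_\chi(f)$. Hence each $\pi_\chi(f)$ is semi-invariant. By orthogonality of characters, $\sum_{\chi\in G^\vee}\chi(g)^{-1}$ equals $|G|$ if $g=1$ and $0$ otherwise, which is the same computation as in Lemma \ref{lem:xi}. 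It follows that $\sum_\chi \pi_\chi(f) = f$. Moreover, $G$-invariance of $I$ gives $g\cdot f\in I$ for every $f\in I$ and $g\in G$, so $\pi_\chi(f)\in I$.

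Now take a finite set of generators $f_1,\ldots,f_m$ of $I$; this is possible since $\cO_{Z,a}$ is Noetherian. The finite family $\{\pi_\chi(f_j)\}_{j,\chi}$ lies in $I$, consists of semi-invariant elements, and generates an ideal containing every $f_j = \sum_\chi \pi_\chi(f_j)$. Therefore it generates $I$. This argument requires no Nakayama step.

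Two points need care. First, we must check that the action of $G$ on $\cO_{Z,a}$ is by ring automorphisms, which holds because $a$ is a fixed point. We must also check that the operators $\pi_\chi$ make sense in the completed or analytic local ring in the analytic case; they do, since they are finite averages. Second, if one wants a \emph{minimal} system of semi-invariant generators, we would instead note that $I/\mathfrak m I$ is a finite-dimensional $G$-representation, hence diagonalizable. We would lift an eigenbasis via $\pi_\chi$ and conclude by Nakayama's lemma. This refinement is the only mildly delicate step, and it is not needed for the statement as given.
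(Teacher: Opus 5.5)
Your proof is correct and is essentially the paper's argument. The paper's components $h_j$ of $f$ with respect to a single $g\in G$ are exactly your $\pi_\chi(f)$ for the cyclic subgroup $\langle g\rangle$, with $\chi(g)=\ep_p^j$. They are obtained by inverting the Vandermonde matrix $(\ep_p^{\ell j})$, and they lie in $I$ because $I$ is $G$-invariant. The only difference is packaging. The paper works in $G$-diagonal coordinates with formal expansions and iterates over elements of $G$ and over generators. Your projectors for the full character group do it in one step inside $\cO_{Z,a}$ and need no diagonal coordinates: the preservation of earlier semi-invariance, which the paper gets from diagonal coordinates, is automatic because $G$ is abelian.
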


\begin{proof}
Let $x = (x_1,\ldots,x_n)$ denote coordinates at $a=0$ with respect to which the action of $G$ is diagonal,
as in \eqref{eq:diagcoords}. Fix an element $f \in I$, and let $G_0$ denote a subgroup of $G$ such that $f$ is $G_0$-semi-invariant. 
(The argument following will be used iteratively, beginning with $G_0 = \{1\}$.) Let $\hf_0(x)$ denote the formal expansion
$f$ at $a=0$, with respect to the coordinates $x$.

Let $p := \lcm(p_1,\ldots,p_r)$. Given $g\in G$, we can write
\begin{equation*}
\hf_0(x) = \sum_{j=0}^{p-1} h_j(x),\quad \text{where}\  \ (g\cdot h_j)(x) = \ep_p^j h_j(x),\ \ j=0,\ldots,p-1
\end{equation*}
(in a unique way). Note that each $h_j$ is also $G_0$-semi-invariant since the action of $G$ is diagonal with respect to
$(x_1,\ldots,x_n)$ and $f$ is $G_0$-semi-invariant.

Moreover, since $I$ is $G$-invariant, it follows that
\begin{equation*}
g^\ell \cdot \hf_0(x) = \sum_{j=0}^{p-1} \ep_p^{\ell j}h_j(x),\quad \ell=0,\ldots,p-1.
\end{equation*}
The matrix $(\ep_p^{\ell j})$ is invertible (Vandermonde matrix), so that $h_j(x)$ belongs to the formal ideal generated by $I$,
for each $j$. But each $g^\ell\cdot f \in \cO_{Z,a}$, so also every $h_j \in I \subset \cO_{Z,a}$. Each $h_j$ is both $g$-semi-invariant
and $G_0$-semi-invariant.

Now, given a system of generators $f_1,\ldots,f_t$ of $I$, we can apply the above argument to each $f_i$, $i=1,\ldots,t$, and each
$g \in G$, one at a time, to end up with a system of $G$-invariant generators of $I$. 
\end{proof}

\begin{corollary}\label{cor:semiinvgens}
Suppose $G = \mu_{p_1} \times \cdots \times \mu_{p_r}$ acts on $Z$, and $a\in Z$ is a fixed point.
Let $E\subset Z$ denote
an snc divisor, and $S$ a smooth subvariety of $Z$ that is snc with respect of $E$. Assume that the ideals $I_{E_j} \subset \cO_{Z,a}$ of 
all components $E_1,\ldots,E_\al$ of $E$ at $a$, and the ideal $I_S \subset \cO_{Z,a}$ of $S$ are all $G$-invariant. Then $Z$ admits a 
$G$-diagonal coordinate system $x = (x_1,\ldots,x_n)$ at $a=0$ which is adapted to $E$ and $S$; i.e., each $I_{E_j} = (x_{i_j})$,
for some $i_j$, and also $I_S$ is generated by some of the $x_i$.
\end{corollary}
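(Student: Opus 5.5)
The plan is to build the coordinate system one generator at a time, using Lemma \ref{lem:semiinvgens} together with the standard fact that a semi-invariant element is carried by the group to a scalar multiple of itself, so that the linear action on the cotangent space $\mathfrak m/\mathfrak m^2$ (with $\mathfrak m$ the maximal ideal of $\cO_{Z,a}$) is diagonal in any basis of classes of semi-invariant elements.

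First I will treat the divisor components. Each $I_{E_j}$ is a $G$-invariant principal ideal. By Lemma \ref{lem:semiinvgens} it has a system of semi-invariant generators $f_1,\ldots,f_t$. Since $I_{E_j}$ is generated by an element of order one, some $f_i$ must have order one. I claim this $f_i$ already generates $I_{E_j}$. To see this, write $f_i = u\,v_j$, where $v_j$ is any local generator. Because $f_i \in I_{E_j}$ has order one and $v_j$ has order one, the factor $u$ has order zero, so it is a unit. Set $x_{i_j} := f_i$.

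Next I will treat $S$ in the same way. Let $c = \codim S$. The ideal $I_S$ is $G$-invariant and is generated by $c$ elements whose differentials are linearly independent. The snc hypothesis says that these $c$ elements can be chosen to include the divisorial parameters $v_j$ of those $E_j$ that contain $S$, and that, together with the remaining $v_j$, they extend to a regular system of parameters. Apply Lemma \ref{lem:semiinvgens} to $I_S$. The classes in $\mathfrak m/\mathfrak m^2$ of the resulting semi-invariant generators span the image $\overline{I_S}$ of $I_S$, which has dimension $c$. So I can choose $c$ of them with linearly independent differentials. Whenever $E_j \supset S$, I first include the already chosen $x_{i_j}$ among these. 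This works because $x_{i_j}$ is semi-invariant, lies in $I_S$, and has differential lying in $\overline{I_S}$. By Nakayama's lemma, $c$ elements of $I_S$ whose classes span $\overline{I_S}$ generate $I_S$, since $I_S$ is generated by part of a regular system of parameters.

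Then I will complete the chosen elements to a full system of parameters. Collect the semi-invariant elements chosen so far, namely the $x_{i_j}$ for all $j$ together with the generators of $I_S$. Their differentials are linearly independent: this is exactly the snc condition between $S$ and $E$, which says that the span of the $dv_j$ meets $\overline{I_S}$ precisely in the span of the $dv_j$ with $E_j\supset S$. The cotangent space $\mathfrak m/\mathfrak m^2$ is a $G$-representation, and the span of the chosen classes is a $G$-invariant subspace because those classes are semi-invariant. Since $G$ is finite abelian and the characteristic is zero, this subspace has a $G$-invariant complement, and that complement decomposes into eigenlines. Lift a basis of eigenvectors of the complement to semi-invariant elements of $\mathfrak m$. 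To do this, take any lift and apply the averaging projector $\frac{1}{|G|}\sum_g \chi(g)^{-1} g$, which is the same Vandermonde-type projection used in the proof of Lemma \ref{lem:semiinvgens}, or alternatively use the diagonal coordinates \eqref{eq:diagcoords}. The chosen semi-invariant elements together with these lifts form a regular system of parameters that is $G$-diagonal and adapted to $E$ and $S$.

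I expect the main obstacle to be the step for $S$: making sure that the semi-invariant generators of $I_S$ can be chosen compatibly with the $x_{i_j}$ for the components $E_j$ containing $S$, and that linear independence is preserved. I would settle this by the linear-algebra argument on $\mathfrak m/\mathfrak m^2$ described above, in which the semi-invariant classes split as a direct sum of eigenspaces.
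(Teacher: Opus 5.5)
Your proof is correct. It follows the paper's core idea: obtain semi-invariant generators from Lemma \ref{lem:semiinvgens}, then complete them to a diagonal coordinate system. The two arguments are organized differently, though.

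The paper proceeds in two stages:
\begin{itemize}
\item It first treats the case where $E$ is transverse to $S$. It takes the diagonal coordinates \eqref{eq:diagcoords} as given and replaces some of them by the semi-invariant generators of the $I_{E_j}$ and of $I_S$.
\item It then reduces the general case to this one. It splits $E=E'\cup E''$, where the components of $E''$ contain $S$, and writes $I_S=(h_1,\ldots,h_\gamma,y_{\gamma+1},\ldots,y_\beta)$ with $(h_j)=I_{E''_j}$. It then applies the transverse case to $E$ and the auxiliary subvariety $S'=\{y_{\gamma+1}=\cdots=y_\beta=0\}$.
\end{itemize}

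You do everything in one pass in $\mathfrak m/\mathfrak m^2$. Placing the $x_{i_j}$ with $E_j\supset S$ first among the generators of $I_S$ does the job of the paper's passage to $S'$. You complete by a $G$-invariant complement and the averaging projector, rather than by exchanging vectors within an existing diagonal basis.

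Your route spells out several points the paper leaves implicit:
\begin{itemize}
\item a semi-invariant generating system of the principal ideal $I_{E_j}$ contains a single generator (your order-one argument);
\item exactly $\codim S$ semi-invariant generators of $I_S$ suffice (Nakayama);
\item all the collected differentials are linearly independent (the snc condition).
\end{itemize}
It also does not need the diagonal coordinates \eqref{eq:diagcoords} as input, since averaging lifts of an eigenbasis of $\mathfrak m/\mathfrak m^2$ produces them directly in $\cO_{Z,a}$. The paper's version is shorter once that input is granted.
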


\begin{proof}
First assume that $E$ and $S$ are transverse. 
Fix a coordinate system $x=(x_1,\ldots,x_n)$ with respect to which the action of $G$ is diagonal. By Lemma \ref{lem:semiinvgens}, 
we can assume that each $I_{E_j}$ has a $G$-semi-invariant generator $h_j$, and also that $I_S$ has $G$-semi-invariant
generators $h_{\al+1},\ldots,h_\be$. The result follows by replacing $\be$ of the coordinates $x_i$ (say, $x_1,\ldots,x_{\be}$)
by $h_1,\ldots,h_\be$.

In general, $E = E' \cup E''$, where $E'$ is transverse to $S$ and the components of $E''$ each contain $S$. Say
that the components of $E'$ are $E'_1,\ldots,E'_\al$. Apply the preceding case to $E'$ and $S$ to find a $G$-diagonal coordinate
system $(w,y,z) = (w_1,\ldots,w_\al, y_1,\ldots,y_\be, z_1,\ldots, z_{n-\al-\be})$ such that $I_{E''_j} = (w_j),\, j=1,\ldots,\al$, and
$I_S = (y_1,\ldots,y_\be)$.

Let $E''_1,\ldots,E''_\ga$ denote the components of $E''$; say, $I_{E''_j} = (h_j)$, $j=1,\ldots,\ga$. We can assume
that $I_S = (h_1,\ldots,h_\ga,y_{\ga +1},\ldots, y_\be)$. (The case $\ga \geq \be$ is trivial.)
Note that $I_{S'} := (y_{\ga +1},\ldots, y_\be)$ is $G$-invariant.
The result follows from the transverse case applied to $E$ and $S'$.
\end{proof}

\begin{example}\label{ex:invtnc}
\emph{Invariant normal crossings ideal.} In this example, we study a normal crossings ideal $I \subset \cO_{Z,a}$
which is invariant with respect to a cyclic group $G = \mu_p$ of automorphisms
of $\cO_{Z,a}$ (where we use the notation above). The case of a general finite abelian group $G$ will be treated
in Theorem \ref{thm:invtnc} below, but we begin with the simpler case here to try to make the argument 
in the general case clearer.

Let $I \subset \cO_{Z,a}$ denote a principle product ideal $I = (f)$, where $f = f_1\cdots f_k$ in $\cO_{Z,a}$ and the
gradients of the factors $f_1,\ldots,f_k$ at $a$ are linearly independent; i.e., $I$ is normal crossings. Assume that $I$ is
$G$-invariant, but that $I$ does not split (nontrivially) into $G$-invariant factors (i.e., we cannot write $f = \vp_1\vp_2$,
neither factor a unit, where $(\vp_i)$ is $G$-invariant, $i=1,2$). Since $I$ is $G$-invariant, the ideal 
$J = I_S = (f_1,\ldots,f_k) \subset \cO_{Z,a}$ 
of the normal crossings locus $S = \{f_1 = \cdots = f_k = 0\}$ is also $G$-invariant.

We claim that $Z$ admits a coordinate system
$y = (y_0,\ldots, y_{n-1})$ at $a$ with respect to which $G$ is diagonal, and $I$ is generated by
\begin{equation}\label{eq:invtnc}
\prod_{j=0}^{k-1} \left( y_0 + \ep_k^j y_1 + \ep_k^{2j} y_2 + \cdots + \ep_k^{(k-1)j} y_{k-1}\right) = \De_k(y_0,\ldots,y_{k-1}).
\end{equation}

If there is also a divisor $E\subset Z$ such that $E,\,S$ are transverse and each $I_{E_i}$ is $G$-invariant,
then we take the coordinates $(y_0,\ldots,y_{n-1})$ adapted to $E$ (as well as $S$).

More generally, In the case that $I$ splits into $G$-invariant factors, it follows that we can choose
$G$-diagonal coordinates in which $I$ is generated by a product of functions of the form \eqref{eq:invtnc}.
If each component $(f_j)$ is $G$-invariant, then this result is given simply by Lemma \ref{lem:semiinvgens}
and Corollary \ref{cor:semiinvgens}.

\medskip
We begin with a $G$-diagonal coordinate system $x=(x_1,\ldots,x_n)$ (adapted to $E$ and $S$). In particular,
for each $i=1,\ldots,n$, $\ep_p \cdot x_i = \ep_p^{q(i)} x_i$, for some $q(i) = 0,\ldots,p-1$.

Since $I$ does not split into $G$-invariant factors, there is a homomorphism of $G = \mu_p$ onto a cyclic subgroup
$\mu_k$ of the group of permutations of $f_1,\ldots,f_k$. The kernel of this homomorphism is the cyclic group
$H = \mu_{p/k}$ generated by $\ep_p^k = \ep_{p/k}$. The kernel $H$ is the subgroup of $G$ of all elements
which induce automorphisms of $(f_j)$, for any $j$. 

By Lemma \ref{lem:semiinvgens}, we can assume that
each $f_j$ is $H$-semi-invariant; i.e., $\ep_{p/k}\cdot f_j = \ep_{p/k}^{\ga(j)} f_j$, for some $\ga(j) = 0, \ldots p/k - 1$.
This means that every monomial $x^\al = x_1^{\al_1}\cdots x_n^{\al_n}$ in the formal expansion of $f_j$ has
the property $\ep_{p/k}\cdot x^\al = \ep_{p/k}^{\ga(j)} x^\al$. But
\begin{equation*}
\ep_{p/k}\cdot x^\al = \ep_p^k\cdot x^\al = \ep_p^{k\sum_i q(i)\al_i} = \ep_{p/k}^{\sum q(i)\al_i};
\end{equation*}
i.e., $\sum q(i) \al_i \equiv \ga(j) \mod p/k$.

In particular, for every monomial $x^\al$ in the formal expansion of $f_1$, $\sum q(i)\al_i$ takes only
the values $\ga(1),\, \ga(1)+p/k,\ldots, \ga(1)+(k-1)p/k \mod p$.

We can write $f_1$ formally as a sum of $G$-semi-invariant terms
$f_1 = f_{1,0} + \cdots f_{1,p-1}$, where $\ep_p \cdot f_{1,m} = \ep_p^m f_{1,m}$, $m=0,\ldots,p-1$; i.e., for every
monomial $x^\al$ in the formal expansion of $f_{1,m}$, we have $\sum q(i)\al_i \equiv m \mod p$.
It follows that
\begin{equation*}
f_1 = h_0 + \ldots + h_{k-1},\quad \text{where each}\ \ h_\ell = f_{1,\ga(1) + \ell p/k},
\end{equation*}
so that
\begin{equation*}
\ep_p\cdot h_\ell = \ep_p^{\ga(1) + \ell p/k} \cdot h_\ell = \ep_p^{\ga(1)} \ep_k^\ell h_\ell.
\end{equation*}

Now, for each $j=0,\ldots,k-1$, we have
\begin{equation}\label{eq:diagvand}
\ep_p^j\cdot f_1 = \ep_p^{j\ga(1)} \sum_{\ell=0}^{k-1} \ep_k^{j\ell} h_\ell.
\end{equation}
Since the matrix $(\ep_k^{j\ell})$ is invertible, it follows that each $h_\ell \in \cO_{Z,a}$, and the gradients
of $h_0,\ldots,h_{k-1}$ at $a$ are linearly independent. The ideal $I$ is generated by the product $f_0'\cdots f_{k-1}'$,
where $f_j' = \ep_p^j\cdot f_1$, $j=0,\ldots,k-1$, 
so we obtain \eqref{eq:invtnc} with $y_\ell = h_\ell$, $\ell = 0,\ldots, k-1$.
\end{example}

The following proposition corresponds to Example \ref{ex:invtnc} for the action of a general finite abelian
group $G$, which we can take to be $G = \mu_{p_1}\times\cdots\times\mu_{p_r}$, for some $p_1,\ldots,p_r$.
A precise formula for the coefficient matrix $\big(\ep_p^{\be_{j \ell}}\big)$ in \eqref{eq:Ginvtnc} below,
generalizing the Vandermonde matrix $(\ep_k^{j \ell})$ in \eqref{eq:invtnc}, will be given in the proof;
see \eqref{eq:nccoeffmatrix}. The general coefficient matrix can be expressed as a block matrix with nested blocks of the form
diagonal $\times$ Vandermonde analogous to the linear transformation in the right-hand side of \eqref{eq:diagvand};
cf. Example \ref{ex:gencirc}.

\begin{theorem}\label{thm:invtnc}
Let $G = \mu_{p_1}\times\cdots\times\mu_{p_r}$ denote a group of automorphisms of $\cO_{Z,a}$ (where we use
the notation above), and let $I \subset \cO_{Z,a}$ denote a principle product ideal $I = (f)$, 
where $f = f_1\cdots f_k$ in $\cO_{Z,a}$ and the
gradients of the factors $f_1,\ldots,f_k$ at $a$ are linearly independent; i.e., $I$ is normal crossings. Assume that $I$ is
$G$-invariant, but that $I$ does not split (nontrivially) into $G$-invariant factors. Furthermore, let $E\subset Z$ 
denote a divisor such that each $I_{E_i}$ is $G$-invariant, and $E,\,S$ are transverse,
where $S$ is the normal crossings locus $S = \{f_1 = \cdots = f_k = 0\}$.

Then $Z$ admits a coordinate system
$y = (y_0,\ldots, y_{n-1})$ at $a$, adapted to $E$ (as well as $S$), with respect to which $G$ is diagonal, 
such that $I$ is generated by
\begin{equation}\label{eq:Ginvtnc}
\prod_{j=0}^{k-1} \left( y_0 + \ep_k^{\be{j,1}} y_1 + \cdots + \ep_k^{\be_{j,k-1}} y_{k-1}\right),
\end{equation}
where the coefficient matrix $\big(\ep_k^{\be_{j \ell}}\big)$ is given by the formula \eqref{eq:ncgencoeffmatrix}
in the proof.
\end{theorem}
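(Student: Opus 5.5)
We will generalize the argument of Example \ref{ex:invtnc} from a cyclic group to $G = \mu_{p_1}\times\cdots\times\mu_{p_r}$, combining it with the circulant combinatorics of \S\ref{subsec:circcomb}.

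\emph{Step 1: coordinates and the permutation action.} By Corollary \ref{cor:semiinvgens}, we fix a $G$-diagonal coordinate system $x$ at $a$ adapted to $E$ and $S$; this applies because $J = I_S = (f_1,\ldots,f_k)$ is $G$-invariant. Since $I$ is $G$-invariant and $f$ has the nc factorization, each $g \in G$ permutes the ideals $(f_j)$. This gives a homomorphism $G \to \cS_k$. Its image is transitive because $I$ does not split into $G$-invariant factors. As in the opening of Section \ref{sec:circ}, the stabilizer $H$ of $(f_1)$ is independent of $j$, since $G$ is abelian, and $|G/H| = k$. By Lemma \ref{lem:semiinvgens} applied to $H$ acting on the invariant ideal $(f_1)$, we may take $f_1$ to be $H$-semi-invariant with character $\chi_1$. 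We then set $f_{\oj} := j\cdot f_1$ for $\oj\in G/H$. Each $f_{\oj}$ is well defined up to the scalar $\chi_1(h)$, which does not change the ideal.

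\emph{Step 2: character decomposition.} We decompose the formal expansion of $f_1$ into $G$-semi-invariant pieces, $f_1 = \sum_{\chi} f_{1,\chi}$ over characters $\chi\in G^*$. $H$-semi-invariance forces every $\chi$ that occurs to restrict to $\chi_1$ on $H$. So the characters occurring lie in a coset $\chi_0\cdot H^{\mathrm O}$, where $H^{\mathrm O} \cong (G/H)^*$ (proof of Lemma \ref{lem:isom}). We write $h_\ell := f_{1,\chi_0\chi_\ell}$ for $\ell\in H^\perp$, using the identification $H^\perp\cong H^{\mathrm O}$ given by the scalar product $\langle\cdot,\cdot\rangle$. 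This gives exactly $k$ pieces, and
\begin{equation*}
j\cdot f_1 = \chi_0(j)\sum_{\ell\in H^\perp}\ep^{\langle j,\ell\rangle} h_\ell,\qquad \oj\in G/H.
\end{equation*}
This is the analogue of \eqref{eq:diagvand}, and it is \eqref{eq:circeqninv} with $X_\ell$ replaced by $h_\ell$. The matrix $(\ep^{\langle j,\ell\rangle})_{\oj\in G/H,\,\ell\in H^\perp}$ is invertible by Corollary \ref{cor:subspaces}; its inverse is \eqref{eq:circeqn}. Hence each $h_\ell$ is an invertible linear combination of the $j\cdot f_1\in\cO_{Z,a}$, so $h_\ell\in\cO_{Z,a}$. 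The gradients of the $h_\ell$ at $a$ are linearly independent, because those of the $f_{\oj}$ are.

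\emph{Step 3: completing the coordinates.} Each $h_\ell$ is $G$-semi-invariant. We set $y_i := h_{\ell_i}/h$-normalized, with the enumeration $\ell_0=0,\ldots,\ell_{k-1}$ of $H^\perp$, where we absorb the constant $\chi_0(j)$ as a unit. Then $I$ is generated by
\begin{equation*}
\prod_{\oj}\sum_i \ep^{\langle j,\ell_i\rangle}y_i .
\end{equation*}
This is \eqref{eq:Ginvtnc} with $\be_{j,i}=\langle j,\ell_i\rangle$; equivalently, it is $\De_{G/H}(y_0,\ldots,y_{k-1})$ by \eqref{eq:eigengen}. This is the coefficient matrix formula to be recorded. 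Its nested block form comes from iterating the factorization \eqref{eq:circeqn1}--\eqref{eq:circeqn3} one cyclic factor at a time.

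\emph{Main obstacle.} The remaining task is to extend $y_0,\ldots,y_{k-1}$ to a full $G$-diagonal system adapted to $E$. Note that the $y_i$ generate $I_S$. By Corollary \ref{cor:semiinvgens}, the original coordinates already contain semi-invariant generators $x_{i_1},\ldots,x_{i_k}$ of $I_S$ and generators of the $I_{E_i}$. Replacing $x_{i_1},\ldots,x_{i_k}$ by the $y_i$ keeps a coordinate system, since the linear parts span the same space modulo $\mathfrak m^2$ by transversality of $E$ and $S$. It also keeps the system diagonal and adapted to $E$. We also need to check that the normalization in Step 2 is consistent: the characters must be computed in a fixed coset independent of the choice of representatives $j$. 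I expect this bookkeeping to be the delicate point.
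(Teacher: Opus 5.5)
Your proof is correct. It uses the same mechanism as the paper's: split $f_1$ into $G$-semi-invariant pieces, then invert a root-of-unity matrix. The organization, however, is different.

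\textbf{How the paper proceeds.} The paper never decomposes under all of $G$ at once. It goes through the cyclic factors one at a time, forming the chain $H\subset H_1=\mu_{p_1}^{(1)}+H\subset H_2\subset\cdots\subset H_s=G$. At each stage it splits the current pieces into $\mu_{p_i}^{(i)}$-isotypic parts, of which only $q_i=|H_i/H_{i-1}|$ occur, and inverts a diagonal$\times$Vandermonde block. This produces the nested matrix \eqref{eq:ncgencoeffmatrix}, with rows indexed by the coset representatives $(\ep_{p_s}^{(s)})^{m_s}\cdots(\ep_{p_1}^{(1)})^{m_1}$, $0\le m_i<q_i$.

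\textbf{How your route compares.} You take the full $G$-isotypic decomposition in one step. You then use duality to see that the occurring characters form the coset $\chi_0H^{\mathrm O}$, and invert the character matrix of $G/H$ via Corollary \ref{cor:subspaces}. By uniqueness of isotypic components, your $h_\ell$ are exactly the paper's $h_{\ell_1\cdots\ell_s}$. The two coefficient matrices agree up to row scaling by $\chi_0(j)$, the choice of coset representatives, and the labelling of columns. So you do obtain the matrix the statement refers to, with no further iteration of \eqref{eq:circeqn1}--\eqref{eq:circeqn3} needed.

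\textbf{What each route buys.} Yours gives a direct identification of the generator with $\De_{G/H}(y_0,\ldots,y_{k-1})$, which the paper only asserts informally in the remark after the theorem. The paper's gives the explicit nested block structure in terms of the given cyclic factors.

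\textbf{Completing the coordinates.} You also spell out how to complete the $y_i$ to a $G$-diagonal system adapted to $E$, which the paper leaves tacit. One attribution is off there. The differentials span the same space modulo $\mathfrak m^2$ simply because both sets generate $I_S$ with independent differentials. Transversality is what keeps the replaced coordinates disjoint from those defining $E$.

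\textbf{The point you flag as delicate.} It is not delicate. The $h_\ell$ are canonical, and your displayed identity holds for every $j\in G$; under $j\mapsto j+h$ it changes only by the factor $\chi_0(h)=\chi_1(h)$.

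\textbf{One normalization to fix.} Here the $p_i$ need not divide $k=|G/H|$. For example, $\mu_4$ acting by $x_1\mapsto x_2\mapsto -x_1$ on $I=(x_1x_2)$ gives $k=2$ with $p_1=4$. So the scalar product must use $\ep_N$ with $N$ a common multiple of the $p_i$, and the inverse carries the factor $1/|G/H|$. The entries $\ep_N^{\langle j,\ell\rangle}$, $\ell\in H^\perp$, are still $k$-th roots of unity, since $\chi_\ell$ factors through $G/H$. Hence they can be written as $\ep_k^{\be_{j\ell}}$, as in \eqref{eq:Ginvtnc}.
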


\begin{remark}
Roughly speaking, the coefficient matrix is a nested block matrix, nested by $i=1,\ldots,s$, where $s\leq r$ and the $i$'th
nested blocks are are of the form diagonal\,$\times\,(\ep_{q_i}^{j \ell})$ (the latter is the $q_i \times q_i$ Vandermonde),
where $q_i < p_i$ and $q_1\cdots q_s = k$. The corresponding diagonal part is also involved in \eqref{eq:diagvand},
but the constants in the diagonal in \eqref{eq:diagvand} do not affect the ideal $I$.
The formula \eqref{eq:Ginvtnc} should be compared to
$\De_{G/H}$, where $H$ is the subgroup of $G$ of automorphisms preserving any (and therefore all)
of the normal crossings factors, where the circulant matrix corresponding to the coordinate system $y$
is the general nested block form, as in Example \ref{ex:gencirc}.
\end{remark}

\begin{proof}[Proof of Theorem \ref{thm:invtnc}]
We begin with a $G$-diagonal coordinate system $x=(x_1,\ldots,x_n)$, adapted to $E$ and $S$. In particular,
for each $i=1,\ldots,r$ and $j=1,\ldots,n$, 
\begin{equation*}
\ep_{p_i}^{(i)} \cdot x_j = \ep_{p_i}^{\ga_{ij}} x_j,\quad \text{for some}\ \ \ga_{ij} = 0,\ldots,p_i-1.
\end{equation*}
Thus, for any monomial $x^\al = x_1^{\al_1}\cdots x_n^{\al_n} \in \cO_{Z,a}$, 
\begin{equation*}
\ep_{p_i}^{(i)} \cdot x^\al = \ep_{p_i}^{\sum_j \ga_{ij}\al_j} x_\al.
\end{equation*}

Let $H$ denote the subgroup of $G$ of all elements that induce automorphisms of the principle ideal $(f_1)$.
Since $I = (f)$ does not split into $G$-invariant factors, $H$ is also the subgroup of $G$ of elements that induce
automorphisms of $(f_j)$, for any $j$, and $|G/H| = k$. We can assume that each $f_j$ is $H$-semi-invariant,
by Lemma \ref{lem:semiinvgens}.

Consider the surjective group homomorphism
\begin{equation}\label{eq:gphomo}
\mu_{p_1} \to \frac{\mu_{p_1}^{(1)}+H}{H} \subset \frac{G}{H}.
\end{equation}
The quotient group $(\mu_{p_1}^{(1)}+H)/H \cong \mu_{q_1}$, where $q_1$ is the smallest $q$ such that
$\big(\ep_{p_1}^{(1)}\big)^q \in H$; i.e., such that $\big(\ep_{p_1}^{(1)}\big)^q$ induces an automorphism of $(f_1)$.
Let $K_1$ denote the kernel of the homomorphism \eqref{eq:gphomo}. Then
\begin{equation*}
K_1 \cong \mu_{p_1/q_1}\quad \text{and}\quad K_1 \cong \mu_{p_1}^{(1)} \cap H.
\end{equation*}

For each $j=1,\ldots,k$, $f_j$ is $H$-semi-invariant and, therefore, $K_1$-semi-invariant, so we can write
\begin{equation*}
\ep_{p_1/q_1}^{(1)} \cdot x^\al = \ep_{p_1/q_1}^{\ga_1(j)} x^\al,\quad \text{where}\ \  0 \leq \ga_1(j) < p_1/q_1,
\end{equation*}
for every monomial $x^\al$ in the formal expansion of $f_j$ (where $\ep_{p_1/q_1}^{(1)}$ means $\big(\ep_{p_1}^{(1)}\big)^{q_1})$.

We can write $f_1$ formally as a sum of $\mu_{p_1}^{(1)}$-semi-invariant terms
$f_1 = f_{1,0} + \cdots f_{1,p_1-1}$, where $\ep_{p_1}^{(1)} \cdot f_{1,\ell} = \ep_{p_1}^\ell f_{1,\ell}$, $\ell=0,\ldots,p_1-1$; i.e., every
monomial $x^\al$ in the formal expansion of $f_{1,\ell}$ satisfies $\ep_{p_1}^{(1)} \cdot x^\al = \ep_{p_1}^\ell x^\al$,
so that $\sum \ga_{1,j}\al_j \equiv \ell \mod p_1$. But $\sum_{j=1}^n \ga_{1,j}\al_j = \ga_1(1) \mod p_1/q_1$, for every 
monomial $x^\al$ in the formal expansion of $f_1$.
Therefore, for every monomial $x^\al$ in the formal expansion of $f_1$, $\sum \ga_{1,j}\al_j$ takes only
the values $\ga_1(1),\, \ga_1(1)+p_1/q_1,\ldots, \ga_1(1)+(q_1-1)p_1/q_1 \mod p_1$.

In other words,
\begin{equation*}
f_1 = h_0 + \ldots + h_{q_1-1},\quad \text{where each}\ \ h_\ell = f_{1,\ga_1(1) + \ell p_1/q_1},
\end{equation*}
so that
\begin{equation*}
\ep_{p_1}^{(1)}\cdot h_\ell = \ep_{p_1}^{\ga_1(1) + \ell p_1/q_1}  h_\ell = \ep_{p_1}^{\ga_1(1)} \ep_{q_1}^\ell h_\ell,
\end{equation*}
and, in general,
\begin{equation*}
\left(\begin{matrix}
f_1\\[.1em]
\ep_{p_1}^{(1)}\cdot f_1\\[.1em]
\vdots\\[.1em]
\big(\ep_{p_1}^{(1)}\big)^{q_1-1}\cdot f_1
\end{matrix}\right)
= \left(\begin{matrix}
1 & & & \\[.1em]
& \ep_{p_1}^{\ga_1(1)} & & \\[.1em]
& & \ddots & \\[.1em]
& & & \ep_{p_1}^{(q_1 - 1)\ga_1(1)}
\end{matrix}\right)
\left(\begin{matrix}
1 & 1 & \cdots & 1 \\[.1em]
1 & \ep_{q_1} & \cdots & \ep_{q_1}^{q_1-1} \\[.1em]
& & \ddots & \\[.1em]
1 & \ep_{q_1}^{q_1-1} & \cdots & \ep_{q_1}^{(q_1-1)^2}
\end{matrix}\right)
\left(\begin{matrix}
h_0 \\[.1em]
h_1 \\[.1em]
\vdots \\[.1em]
h_{q_1-1}
\end{matrix}\right)
\end{equation*}

Each $h_\ell$ is $\ep_{p_1}^{(1)}$-semi-invariant and $H$-semi-invariant; therefore, $H_1$-semi-invariant,
where
\begin{equation*}
H_1 := \mu_{p_1}^{(1)} + H.
\end{equation*}

Now we can consider the surjective group homomorphism
\begin{equation}\label{eq:gphomo1}
\mu_{p_2} \to \frac{\mu_{p_2}^{(2)}+H_1}{H_1} \subset \frac{G}{H_1}.
\end{equation}
The quotient group $(\mu_{p_2}^{(2)}+H_1)/H_1  \cong \mu_{q_2}$, where $q_2$ is the smallest $q$ such that
$\big(\ep_{p_2}^{(2)}\big)^q \in H_1$. Then $q_2$ divides $k/q_1$.
(If $q_1q_2 =k$, we will be finished at this step.) Let $K_2$ denote the kernel of the group homomorphism \eqref{eq:gphomo1}.
Then $K_2 \cong \mu_{p_2/q_2}$ and $K_2 \cong \mu_{p_2}^{(2)} \cap H_1$.

Each $h_\ell$ is $K_2$-semi-invariant, so we can write 
\begin{equation*}
\ep_{p_2/q_2}^{(2)} \cdot h_\ell = \ep_{p_2/q_2}^{\ga_2(\ell)} h_\ell.
\end{equation*}

Following the previous pattern, we can write
\begin{equation*}
h_{\ell_1} = \sum_{\ell_2 = 0}^{q_2 -1} h_{\ell_1\ell_2},\quad \ell_1=0,\ldots,q_1-1, \ \ell_2 = 0,\ldots, q_2-1
\end{equation*}
(formally), where each $h_{\ell_1\ell_2}$ is $\ep_{p_2}^{(2)}$-semi-invariant and, in fact,
\begin{equation*}
\ep_{p_2}^{(2)}\cdot h_{\ell _1\ell_2}= \ep_{p_2}^{\ga_2(\ell_1) + \ell_2 p_2/q_2}  h_{\ell_1\ell_2} 
= \ep_{p_2}^{\ga_2(\ell_1)} \ep_{q_2}^{\ell_2} h_{\ell_1\ell_2}.
\end{equation*}
So, for each $m_1 = 0,\ldots,q_1-1$, and $m_2 = 0,\ldots,q_2-1$, we get
\begin{equation}\label{eq:nccoeffmatrix}
\begin{aligned}
\big(\ep_{p_2}^{(2)}\big)^{m_2} \big(\ep_{p_1}^{(1)}\big)^{m_1} \cdot f_1
&= \big(\ep_{p_2}^{(2)}\big)^{m_2} \cdot \left(\ep_{p_1}^{m_1\ga_1(1)} \sum_{\ell_1=0}^{q_1-1} \ep_{q_1}^{m_1\ell_1} h_{\ell_1}\right)\\
&= \ep_{p_1}^{m_1\ga_1(1)} \sum_{\ell_1=0}^{q_1-1} \ep_{q_1}^{m_1\ell_1} \big(\ep_{p_2}^{(2)}\big)^{m_2} \cdot h_{\ell_1}\\
&= \ep_{p_1}^{m_1\ga_1(1)} \sum_{\ell_1=0}^{q_1-1} \ep_{q_1}^{m_1\ell_1} \ep_{p_2}^{m_2\ga_2(\ell_1)}
 \sum_{\ell_2 = 0}^{q_2 -1} \ep_{q_2}^{m_2\ell_2} h_{\ell_1\ell_2}.
\end{aligned}
\end{equation}
The coefficient matrix here is a nested block matrix with blocks of the form diagonal\,$\times$\,Vandermonde.

If $k=q_1q_2$, then we are finished at this step; otherwise, after at most $r$ steps since $I = (f)$ cannot be factored
into nontrivial $G$-invariant factors. If we finish after $s$ steps (where $s\leq r$), then $k = q_1 q_2 \cdots q_s$ where
$q_1,\ldots,q_s$ are introduced successively as above, and we get
\begin{multline}\label{eq:ncgencoeffmatrix}
\big(\ep_{p_s}^{(s)}\big)^{m_s} \cdots \big(\ep_{p_1}^{(1)}\big)^{m_1} \cdot f_1\\
= \ep_{p_1}^{m_1\ga_1(1)} \sum_{\ell_1=0}^{q_1-1} \ep_{q_1}^{m_1\ell_1} \ep_{p_2}^{m_2\ga_2(\ell_1)}
 \sum_{\ell_2 = 0}^{q_2 -1} \cdots\, \ep_{q_{s-1}}^{m_{s-1}\ell_{s-1}} \ep_{p_s}^{m_s\ga_s(\ell_{s-1})}\sum_{\ell_s = 0}^{q_s -1} \ep_{q_s}^{m_s\ell_s} h_{\ell_1\ell_2\cdots \ell_s},
\end{multline}
for all $m_i = 0,\ldots,q_i -1$, $i =1,\ldots,s$.

Since the coefficient matrix is invertible, it follows (as in Example \ref{ex:invtnc} above) that the
$h_{\ell_1\cdots\ell_s} \in \cO_{Z,a}$, and their gradients are linearly independent. The ideal $I$ is generated by the product
of the $\big(\ep_{p_s}^{(s)}\big)^{m_s}\cdots \big(\ep_{p_1}^{(1)}\big)^{m_1} \cdot f_1$,
so we obtain \eqref{eq:Ginvtnc} with the $y$-coordinates given by the $h_{\ell_1\cdots\ell_s}$.
\end{proof}

\bibliographystyle{amsplain}

\end{document}